\documentclass[sn-mathphys-num, 12pt]{sn-jnl}

\usepackage{amsmath,amssymb,amsfonts,amsthm,mathrsfs, latexsym, textcomp,amscd}%
\usepackage{multirow}
\usepackage[title]{appendix}
\usepackage{textcomp}
\usepackage{manyfoot}
\usepackage{booktabs}
\usepackage{algorithm}
\usepackage{algorithmicx}
\usepackage{algpseudocode}
\usepackage{listings}
\usepackage{natbib}
\usepackage[dvips]{epsfig}
\usepackage{graphics,verbatim}
\usepackage{upref, hyperref}
\usepackage{xcolor}
\usepackage{wrapfig}
\usepackage{enumitem} 
\usepackage[nameinlink]{cleveref}
\usepackage{orcidlink}

\theoremstyle{plain}
\newtheorem{theorem}{Theorem}[section]
\theoremstyle{plain}
\newtheorem{lemma}[theorem]{Lemma}
\newtheorem{proposition}[theorem]{Proposition}
\newtheorem{cor}[theorem]{Corollary}

\newtheorem{definition}{Definition}
\newtheorem{remark}{Remark}

\newcommand{\<}{\left\langle}
\renewcommand{\>}{\right\rangle}
\renewcommand{\(}{\left(}
\renewcommand{\)}{\right)}
\renewcommand{\[}{\left[}
\renewcommand{\]}{\right]}
\renewcommand{\{}{\left\lbrace }
\renewcommand{\}}{\right\rbrace }

\newcommand{\Be} {\begin{equation}}
\newcommand{\Ee} {\end{equation}}
\newcommand{\Nee} {\notag\end{equation}}
\newcommand{\bea} {\begin{eqnarray}}
\newcommand{\eea} {\end{eqnarray}}
\newcommand{\Bea} {\begin{eqnarray*}}
\newcommand{\Eea} {\end{eqnarray*}}

\newcommand{\abs}[1]{\left\vert#1\right\vert}

\newcommand{\norm}[1]{\left\Vert#1\right\Vert}

\newcommand{\De} {\Delta}
\newcommand{\la} {\lambda}
\newcommand{\rn}{\mathbb{R}^{N}}

\newcommand{\N}{\ensuremath{\mathbb{N}}}
\newcommand{\R}{\ensuremath{\mathbb{R}}}

\newcommand{\Bee}{\mathsf{B}}

\newcommand{\eps}{\varepsilon}
\newcommand{\dv}{\mathrm{~d} V_{\bn}}
\newcommand{\dx}{\mathrm{~d}x}
\newcommand{\dy}{\mathrm{~d}y}

\newcommand{\al} {\alpha}

\newcommand{\Ga} {\Gamma}

\newcommand{\Ro}{\varrho}

\newcommand{\f}{\frac}
\newcommand{\Sl}{\mathbb S}

\newcommand{\bn}{\mathbb{B}^{N}}

\begin{document}


\title[Sign changing solutions]{Existence and non-existence of nonradial solutions to an elliptic equation on hyperbolic space with critical and subcritical nonlinearities}

\author[1]{\fnm{Atanu} \sur{Manna} \orcidlink{0009-0001-3419-3440}}

\author*[2]{\fnm{Bhakti Bhusan} \sur{Manna} \orcidlink{0000-0002-8098-2782}}

\abstract{ 	
		In this article, we study the semi-linear elliptic problem:
		\begin{align*}
			-\Delta_{\mathbb{B}^N} u  \, - \,  \lambda  u = |u|^{p-1}u,  \quad  u \in H^{1}\(\mathbb{B}^N\), 
		\end{align*}
		where $N \geq 3$, $\lambda > \frac{N(N-2)}{4}$, and $1 < p \leq 2^*-1$. Here, $\mathbb{B}^N$ represents the Poincar\'e ball model of the hyperbolic space and $H^{1}(\mathbb{B}^N)$ denotes the Sobolev space on $\mathbb{B}^N$. In the critical case $p = 2^*-1$, with $\lambda < \frac{(N-1)^2}{4}$ and $N \geq 4$, we establish the existence and multiplicity of nonradial sign-changing solutions. Moreover, our result answers an open question: ``the existence of sign-changing solutions in the lower dimensions $4 \leq N \leq 6$". We also prove a partial non-existence result for a large class of symmetric solutions when $\lambda > \frac{(N-1)^2}{4}$, based on a quantitative orbit packing condition.
		
}

\keywords{Nonradial solutions, Br\'ezis-Nirenberg problem, Critical exponents, Non-existence of sign-changing solutions, Hyperbolic space}

\pacs[MSC Classification]{Primary 35A01; Secondary 35B06, 58J70, 35B33, 35J20.}

\footnotetext[2]{\textbf{*Corresponding author: Bhakti Bhusan Manna}, IIT Hyderabad, Department of Mathematics, Kandi, Sangareddy, Telangana, 502284, India, email: bbmanna@math.iith.ac.in}

\footnotetext[1]{\textbf{Atanu Manna}, IIT Hyderabad, Department of Mathematics, Kandi, Sangareddy, Telangana, 502284, India, email: mannaatanu98@gmail.com}

\maketitle


	\section{Introduction}
The objective of this article is twofold: first, we study the existence of nonradial sign-changing solutions for the critical case, and secondly, we establish symmetry-based non-existence results for spectral values. Let us consider the following semilinear elliptic equation, known as the Poincar\'e-Sobolev equation in the hyperbolic space $\bn$:
\begin{align*}
-\De_{\bn} u  \, - \,  \la  u &= |u|^{p-1}u,  \quad  u \in H^{1}(\bn), \qquad \label{(1)} \tag{$Eq_\la$}
\end{align*}
where $N \geq 3$, $\lambda > \frac{N(N-2)}{4}$, and $1 < p \leq 2^*-1$. $H^1\(\bn\)$ denotes the Sobolev space defined on the $N$-dimensional hyperbolic space and $\la_1 = \f{(N-1)^2}{4}$ is the bottom of the $L^2$-spectrum of $- \De_{\bn}$, the Laplace-Beltrami operator.\\

Elliptic equations involving the critical Sobolev exponent form one of the central themes in nonlinear analysis. The main difficulty comes from the fact that the Sobolev embedding $H_0^1\(\Omega\) \hookrightarrow L^{2^*}(\Omega)$ is continuous but not compact. As a consequence, variational sequences may lose mass by concentration, and the standard direct methods of the calculus of variations do not apply without additional arguments.\\

Let us consider the critical equation in the entire Euclidean space:
\begin{align*}
	- \Delta w = |w|^{2^*-2}w, \quad w \in D^{1,2}(\rn), \label{equation in R^N} \tag{$Eq^\infty$}
\end{align*}
whose positive finite-energy solutions are the Aubin–Talenti bubbles \cite{Aubin,Talenti}. In \cite{Brezis_Nirenberg}, Br\'ezis and Nirenberg showed that lower-order perturbations can recover compactness and yield positive solutions in bounded domains. This work generated an extensive literature on critical elliptic equations, including existence, multiplicity, non-existence, concentration, and asymptotic analysis. The concentration-compactness method of Lions and the global compactness decomposition of Struwe \cite{Struwe} have become fundamental tools for precisely describing how Palais–Smale sequences fail to be compact.\\

Topology and symmetry have also played a decisive role in critical problems. In \cite{Bahri_Coron}, Bahri and Coron proved that the topology of the domain can produce positive solutions of critical problems through refined variational and topological arguments. In another direction, Ding \cite{Ding} used the conformal equivalence between $\rn$ and the sphere $\mathbb{S}^N$, together with suitable symmetries, to obtain sign-changing solutions to \eqref{equation in R^N}. Ding’s method illustrates
an important principle: symmetry can force the change of signs of solutions and can also remove some compactness defects by restricting the variational problem to a more compact symmetric class. Compared to positive solutions, showing the existence of sign-changing entire solutions requires more subtle analysis. In \cite{Del Pino_Musso_Paccard_Pistoia}, del Pino, Musso, Pacard, and Pistoia used Lyapunov-Schmidt reduction to show the existence of sign-changing clusters of bubbles that solve \eqref{equation in R^N}. In recent work \cite{Clapp_Rios}, Clapp and Rios developed a symmetry approach by considering an equivariant subspace and established existence and multiplicity results for nonradial sign-changing solutions to \eqref{equation in R^N}.\\

Now, we focus on the critical case $p = 2^*-1$ of our equation \eqref{(1)}, which is in hyperbolic space. In \cite{Mancini_Sandeep}, Mancini and Sandeep established sharp existence and non-existence results for positive entire solutions to the critical Poincar\'e-Sobolev equation. They proved that every positive solution is radially symmetric about some point and is unique up to hyperbolic isometries. For $N \geq 4$, existence holds when $\frac{N(N-2)}{4} < \lambda < \lambda_1$, while nonexistence holds for $\la \leq \frac{N(N-2)}{4}$. Moreover, when $N=3$, there is no positive entire solution.\\

In \cite{Bhakta_Sandeep}, Bhakta and Sandeep proved the existence of a radial sign-changing solution to the critical Poincar\'e-Sobolev equation, if $N \geq 7$, $\frac{N(N-2)}{4} < \lambda < \lambda_1$, and non-existence if $\la \leq \f{N(N-2)}{4}$ or $N=3$. To show the existence, their approach includes firstly identifying an appropriate energy level for sign-changing solutions and then showing that it lies below the threshold at which bubbling can occur, using Palais-Smale decomposition in the spirit of Struwe. The dimensional restriction $N \geq 7$ is needed to obtain the required energy bound. Later in \cite{Ganguly_Sandeep}, Ganguly and Sandeep established a non-existence result for any nontrivial solution to the critical equation, when $N \geq 3$ and $\la \leq \frac{N(N-2)}{4}$. Also, they obtained infinitely many radial sign-changing solutions by subcritical approximation $p_n \to 2^*-1$. For fixed $n$, they took a sequence of radial sign-changing solutions with uniform $H^1$-bounds and then passed to the critical limit. They established a compactness lemma, which rules out the blow-up when $N \geq 7$, while passing to the critical exponent.\\

In our earlier articles, we established that for $1<p<2^*-1, \; \la < \la_1$, there exist infinitely many nonradial sign-changing solutions when $N = 4 \ \text{or} \ N \geq 6$, two such solutions for dimension $N = 5$, and one for dimension $N=3$. In \cite{Manna_Manna1}, the argument combines the construction of an equivariant subspace of $H^1(\bn)$, the analysis of the mountain pass level Palais-Smale sequence, the fountain theorem, and the compactness mechanisms adopted in the hyperbolic space. These particular kind of equivariant subspaces exist when $N\geq 4$. In \cite{Manna_Manna2}, the solutions constructed are different in nature from those in the previous article, thus improving the multiplicity results and the existence of nonradial sign-changing solutions in lower dimensions.\\

The preceding results leave us with two natural questions in the critical case of \eqref{(1)}. Firstly, while the existence of radial sign-changing solutions is known for dimensions $N \geq 7$, and non-existence for $N=3$, the existence of sign-changing solutions in lower dimensions was unknown. So the question is:\\

\begin{enumerate}
	\item[\textbf{Q1.}] \textit{Does the critical Poincar\'e-Sobolev equation \eqref{(1)} admit any sign-changing finite energy solution in the dimensions $4 \leq N \leq 6$?}
\end{enumerate}

\medskip

Secondly, all previously known existence results for sign-changing solutions in the critical case concern radial solutions. To the best of our knowledge, no existence result for nonradial sign-changing solutions was available in any dimension. So we wish to answer the following question:\\

\begin{enumerate}
	\item[\textbf{Q2.}] \textit{Beyond the radial class, does the critical Poincar\'e-Sobolev equation possess any nonradial finite energy solutions?}
\end{enumerate}

\medskip

We answer both questions affirmatively by establishing the existence of nonradial sign-changing solutions for dimensions $N \geq 4$, together with an explicit multiplicity depending on the dimension. We prove the following theorem:\\

\begin{theorem}\label{Existence theorem}
	Let $N \geq 4$, $ \frac{N(N-2)}{4} < \lambda < \frac{(N-1)^2}{4}, \; \text{and} \; p = 2^*-1$, then \eqref{(1)} admits at least $\lfloor \f{N}{4} \rfloor$ distinct nonradial sign-changing solutions.
\end{theorem}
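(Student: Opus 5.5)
We follow the equivariant strategy of \cite{Clapp_Rios} and \cite{Manna_Manna1}. We work in the upper half-space or ball model and regard the isometry group of $\bn$ fixing the origin as $O(N)$. For each $j=1,\dots,\lfloor N/4\rfloor$ we choose a closed subgroup $G_j\subset O(N)$ and a surjective homomorphism $\phi_j:G_j\to\mathbb{Z}_2$, modeled on the Clapp--Rios construction. Concretely, we split $\R^N=\R^{4}\oplus\cdots$ so that $j$ copies of $\R^4$ carry an $O(2)\times O(2)$-type action with an involution $\tau$ exchanging the two $\R^2$ factors (so $\phi_j(\tau)=-1$). The complementary block carries a large group. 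We then set
\[
H_j=\{u\in H^1(\bn): u(gx)=\phi_j(g)u(x)\ \text{for all } g\in G_j\}.
\]
The choice should satisfy three conditions: (i) no $G_j$-orbit is a single point except the origin, and every $G_j$-orbit of a point in $\bn\setminus\{0\}$ is infinite, or at least of cardinality $\ge2$ away from a controlled set; (ii) every nonzero $u\in H_j$ changes sign and is nonradial, since $u(\tau x)=-u(x)$; (iii) the spaces $H_j$ produce distinct solutions, for instance because the fixed-point structure or the orbit types differ for different $j$. By the principle of symmetric criticality (Palais), critical points of the energy $J_\la(u)=\frac12\int(|\nabla_{\bn}u|^2-\la u^2)\dv-\frac1{2^*}\int|u|^{2^*}\dv$ restricted to $H_j$ are solutions of \eqref{(1)}.

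Next we minimize $J_\la$ on the Nehari manifold $\mathcal N_j=\mathcal N\cap H_j$. Coercivity of the quadratic form for $\la<\la_1$ (Poincaré inequality) gives $c_j=\inf_{\mathcal N_j}J_\la>0$ and a bounded $(PS)_{c_j}$ sequence. The key tool is a $G_j$-equivariant Struwe-type decomposition on $\bn$, adapted from \cite{Bhakta_Sandeep}. A Palais--Smale sequence at level $c$ converges, up to hyperbolic isometries, to a critical point $u_0$, plus profiles of two kinds. The first kind are bubbles of \eqref{equation in R^N}, each carrying energy $\ge \frac1N S^{N/2}$. The second kind are translated solutions of \eqref{(1)} that escape to infinity in $\bn$, each carrying energy $\ge$ the ground-state level $m_\la$ of Mancini--Sandeep. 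Equivariance forces any concentration point or escaping center $x$ to come with its entire $G_j$-orbit. Since orbits away from the origin are infinite, or have at least $\ell\ge2$ points, loss of compactness costs at least $\ell\cdot\min\{\frac1N S^{N/2},m_\la\}$. The only exception is a bubble concentrating at the origin, where the sign structure forces a sign-changing bubble of energy $\ge \frac2N S^{N/2}$ (Weth's estimate). The escaping-to-infinity part is handled by choosing $G_j$ so that the orbits of points tending to $\partial\bn$ become unbounded in cardinality, as in \cite{Manna_Manna1}.

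It remains to check the energy bound $c_j<\min\{\frac2N S^{N/2},\ \ell\, m_\la\}$, where $m_\la<\frac1N S^{N/2}$ exists for $N\ge4$. For this we construct a test function in $H_j$. We take the positive ground state $U$ of \eqref{(1)} centered at a point $a$ lying in a $G_j$-orbit of minimal cardinality (two points, $a$ and $\tau a$). We form $U_a-U_{\tau a}$ and average it over the stabilizer so that it lies in $H_j$. Pushing $a$ toward $\partial\bn$ makes the interaction term decay, so $\sup_t J_\la(t(U_a-U_{\tau a}))\to 2m_\la<\frac2NS^{N/2}$. We expect this to be the main obstacle. The point is to arrange $G_j$ so that it both has such two-point orbits, giving a good upper bound, and forbids cheap concentration elsewhere, giving a good lower bound. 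We must also check that the interaction estimate, using hyperbolic decay $U(x)\sim e^{-c\,d(x,0)}$ with $c>\frac{N-1}{2}$, has the right sign or is negligible. Once $c_j$ is below the compactness threshold, the minimizer exists, and $|u|$-type arguments are unnecessary since membership in $H_j$ already forces a sign change. Finally we distinguish the $\lfloor N/4\rfloor$ solutions via their different symmetry groups, using energy or nodal-set comparisons as in \cite{Clapp_Rios}.
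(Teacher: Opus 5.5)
Your argument breaks down at the step you yourself flag as the main obstacle, the upper bound on $c_j$. In fact, the thresholds you aim for cannot be beaten in $H_j$, for three reasons.

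First, your groups have no usable two-point orbit $\{a,\tau a\}$. If the orbit of $a$ is finite, its $\R^4$-components must vanish, since otherwise the $O(2)\times O(2)$-part already produces a circle or a torus. Then $\tau a=a$, and because $\phi_j(\tau)=-1$, the $\phi_j$-average of $U_a$ is identically zero.

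Second, even if a genuine two-point orbit existed, the inequality $c_j<\ell m_\la=2m_\la$ that your own compactness step requires is impossible. For $0\neq v\in H_j$, the relation $v(\tau x)=-v(x)$ gives $v^-=v^+\circ\tau$. Hence $\norm{v}_\la^2=2\norm{v^+}_\la^2$ and $\int_{\bn}\abs{v}^{2^*}\dv=2\int_{\bn}(v^+)^{2^*}\dv$. So the Rayleigh quotient of $v$ is $2^{2/N}$ times that of $v^+$, and the latter is at least $S_{\la,2^*-1}$. Thus $\sup_{t>0}J_\la(tv)\ge\f2N S_{\la,2^*-1}^{N/2}=2m_\la$ for every such $v$. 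In particular, $U_a-U_{\tau a}$ can only approach $2m_\la$ from above, because opposite-sign bumps repel.

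Third, $\f2NS^{N/2}$ is the wrong threshold at the origin, and in general it is not beaten. By the Weth estimate you quote, the Clapp--Rios minimizer $W$ of $S^\phi_\infty$ is nodal, so $c^\phi_\infty=\f1N(S^\phi_\infty)^{N/2}>\f2NS^{N/2}$. Since $S^\phi$ is an infimum of affine functions of $\tilde\la$, it is concave and nonincreasing in $\tilde\la$. It equals $S^\phi_\infty$ at $\tilde\la=0$ by dilation invariance on $\Bee$. Therefore $c_j\to c^\phi_\infty$ as $\la\downarrow\f{N(N-2)}{4}$, and $c_j<\f2NS^{N/2}$ fails near that endpoint.

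The missing idea is to charge concentration at a fixed point with the equivariant Euclidean level $S^\phi_\infty$ itself, and to prove $S^\phi<S^\phi_\infty$ directly. This is what the paper does.

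\emph{The strict energy inequality.} The Kelvin transform of $W$ is $C^1$ near $0$ and vanishes there by equivariance. Hence $W=O(\abs{x}^{1-N})$ and $\nabla W=O(\abs{x}^{-N})$, so $W\in L^2(\rn)$. The truncated rescalings $W_\eps=\eps^{(2-N)/2}\eta W(x/\eps)\in H_0^1(\Bee)^\phi$ then give $S^\phi\le S^\phi_\infty-c_W\tilde\la\eps^2+o(\eps^2)$, with $c_W=4\int_{\rn}W^2\dx/\norm{W}_{L^{2^*}(\rn)}^2>0$. This works because the cutoff errors are only $O(\eps^N)$ and $o(\eps^2)$.

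\emph{Compactness.} This needs neither a Struwe decomposition nor $m_\la$.
\begin{itemize}
\item If $\mathrm{Fix}\,\Ga=\{0\}$: Lions' concentration-compactness in the conformal ball, together with $\Ga$-invariance of the defect measure and infinite orbits, leaves only an atom at $0$. That atom costs at least $S^\phi_\infty$.
\item If $\mathrm{Fix}\,\Ga\neq\{0\}$: this happens for the last group when $4\nmid N$. Your ``orbits grow near $\partial\bn$'' argument does not cover this case, since mass can slide to $\partial\bn$ along $\mathrm{Fix}$ without multiplication. The paper recenters the minimizing sequence by hyperbolic translations $\tau_{b_n}$ with $b_n\in\mathrm{Fix}_{\bn}\Ga$, which preserve $H^1(\bn)^\phi$. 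After that, atoms can only sit at fixed points, and each again costs at least $S^\phi_\infty$.
\end{itemize}

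\emph{Distinctness.} This is just $H^1(\bn)^{\phi_i}\cap H^1(\bn)^{\phi_j}=\{0\}$.
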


\medskip

\medskip

When $\la > \la_1$, the equation \eqref{(1)} is known as Helmholtz type equation. In \cite{Mancini_Sandeep}, Mancini and Sandeep have shown that if $\la > \la_1$ and $p>1$, \eqref{(1)} does not possess any positive solution in $H^1\(\bn\)$. This can be proved by the fact that there exists $\eps > 0$ and a function $\psi \in C_c^\infty \(\bn\)$ such that
\begin{align*}
	\la - \eps > \la_1, \; \text{and} \; \la - \eps = \f{\int_{\bn} \abs{\nabla_{\bn} \psi}^2 \dv }{\int_{\bn} \psi^2 \dv}, \; \text{i.e.,} \; \int_{\bn} \abs{\nabla_{\bn} \psi}^2 \dv - \la \int_{\bn} \psi^2 \dv < 0.
\end{align*}
Now multiplying \eqref{(1)} by the test function $\f{\psi^2}{u}$, where $u$ is a positive solution to \eqref{(1)} and doing the integration by parts, we obtain
\begin{align*}
	\int_{\bn} \abs{\nabla_{\bn} \psi}^2 \dv - \la \int_{\bn} \psi^2 \dv = \int_{\bn} u^{p-1} \psi^2 \dv + \int_{\bn} \f{1}{u^2} \( \psi \abs{\nabla_{\bn}u} - u \abs{\nabla_{\bn} \psi}\)^2 \dv.
\end{align*}
Then the LHS is negative, but the RHS is positive. Therefore, there cannot be any positive solution to \eqref{(1)}.\\

Furthermore, for $\la > \la_1$ one can use a comparison argument to show that the positive solution must decay no faster than $\(\sinh d_{\bn}(x,0)\)^{-\frac{N-2}{2}}$, which is insufficient for square integrability, see the non-existence theorem in the log-perturbed Br\'ezis-Nirenberg problem on $\bn$, \cite{Ghosh_Joseph_Karmakar}. In \cite{Casteras_Mandel}, Casteras and Mandel established the nonexistence of radial solutions in $H^1\(\bn\)$, by studying the ODE corresponding to \eqref{(1)}. They proved that nontrivial radial solutions are oscillatory and their optimal decay rate is given by $\abs{u(r)} \approx \(\sinh r\)^{-\frac{N-1}{2}}$, as $r \to + \infty$. Consequently, such solutions do not belong to $L^2\(\bn\)$, and hence do not belong to $H^1\(\bn\)$; however they belong to $W^{2,s}(\bn)$, where $s>2$. With these results, we pose the following question:
\begin{align*}
	\textbf{Q3.} \; \textit{Do there exist nonradial solutions to \eqref{(1)}, when $\la > \la_1$ and $p>1$?} \hspace*{5cm}
\end{align*} 
In this article, we give a partial negative answer by showing the nonexistence of $H^1$-solutions in a large class of function spaces, which contains the radial Sobolev space $H_r^1\(\bn\)$, as well as an infinite-dimensional subspace consisting only of nonradial functions. We use the spectral theory of Schr\"odinger type operator. Firstly, we realize that \eqref{(1)} can be written as
\begin{align}
	\(- \De_{\bn} + V \) u = \la u, \quad \text{where} \; V = - |u|^{p-1}.  \label{Schrodinger operator form equation} \tag{$Eq'_\la$}
\end{align} 
Thus a finite energy solution to \eqref{(1)} is an $L^2$-eigenfunction of the Schr\"odinger type operator $- \De_{\bn} + V$. In \cite{BM}, Borthwick and Marzuola considered hyperbolic space $\bn$, with $N \geq 3$ and a real potential $V$. Then they proved
\begin{align}
	V \in L^\infty \(\bn\), \; V = o(\rho^{-1}) \; \text{as} \; \rho \to \infty, \; (-\De_{\bn} + V) \psi = \la \psi, \; \la > \la_1, \; \psi \in H^1\(\bn\) \Rightarrow \psi = 0. \label{Marzuola and Borthwick result}
\end{align}

Here $\(\rho, \theta\)$ is the geodesic polar coordinate in $\bn$. We establish the uniform decay estimate of the potential $V$, for various classes of functions in $H^1\(\bn\)$. The functions are mainly invariant under some subgroups of $O(N)$. The main goal is to establish a sufficient condition on the subgroups, so that invariant functions have enough decay to satisfy the hypothesis of \eqref{Marzuola and Borthwick result}. Let $\Ga$ be a closed subgroup of $O(N)$, then we define a maximal number of points that are separated by some fixed Euclidean distance and placed on the orbit on the unit sphere.\\

\begin{definition}
	Let $\Ga \subset_{\text{closed}} O(N)$. For $\theta \in \Sl^{N-1}$ and $\eta > 0$, define
	\begin{align*}
		M_\Ga (\theta, \eta) := \sup \{m \in \N \ : \ \exists \ g_1, \cdots, g_m \in \Ga \ \text{such that} \  |g_i \theta - g_j \theta| \geq \eta, \ \text{for} \ i \neq j\}.
	\end{align*}
\end{definition}

\vspace*{2mm}

Now, we introduce a \textbf{quantitative orbit packing} condition depending on a closed subgroup $\Ga$ of $O(N)$ and a positive real $d$. This condition will be called $(QOP)_{\Ga, d}$.\\

\begin{definition}\label{orbit packing number}
	Let $\Ga$ be a closed subgroup of $O(N)$. We say that $\Ga$ satisfies $(QOP)_{\Ga, d}$ for some $d \in (0, N-1]$, if there exist constants $C_0 >0, \ \eta_0 > 0$ such that
	\begin{align*}
		M_\Ga (\theta, \eta) \geq C_0 \eta^{-d},
	\end{align*}
	for every $\theta \in \Sl^{N-1}$ and every $0 < \eta < \eta_0$.
\end{definition}

\medskip

With this quantitative orbit packing condition enforced on the subgroups, we can prove the following non-existence theorem:\\

\begin{theorem}\label{Non-existence theorem for higher eigenvalues}
	Let $N \geq 3, \; \la > \la_1, \; 1<p\leq 2^*-1$ and let $u$ solve \eqref{(1)}. If $\abs{u}$ is $\Ga$-invariant, where $\Ga \subset O(N)$ satisfies the condition $(QOP)_{\Ga, d}$, then $u \equiv 0$ in $\bn$.
\end{theorem}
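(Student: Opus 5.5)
We will reduce the statement to the Borthwick--Marzuola result \eqref{Marzuola and Borthwick result}. Write \eqref{(1)} in the Schr\"odinger form \eqref{Schrodinger operator form equation} with $V=-|u|^{p-1}$. Then two things must be checked: $V\in L^\infty(\bn)$, and $V(\rho,\theta)=o(\rho^{-1})$ as $\rho\to\infty$ uniformly in $\theta$. Once both hold, $u\in H^1(\bn)$ is an $L^2$-eigenfunction with eigenvalue $\la>\la_1$, so $u\equiv 0$.

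\textbf{Boundedness.} Since $p\le 2^*-1$, standard elliptic regularity applies: Brezis--Kato/Moser iteration in local charts, with harmonic radius uniformly bounded below because $\bn$ is homogeneous. This gives $u\in L^\infty_{loc}$ with bounds uniform in the centre. Concretely, for every $x\in\bn$,
\[
|u(x)|\le C\,\|u\|_{L^{2^*}(B(x,1))}^{\,\sigma}
\]
for some $\sigma>0$, once $\|u\|_{L^{2^*}(B(x,1))}$ is small. In the critical case, smallness must be used for the $\eps$-regularity: the Moser iteration only closes when the local $L^{2^*}$ norm of $|u|^{p-1}$ in $L^{N/2}$ is below a threshold. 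Since $u\in L^{2^*}(\bn)$, this smallness holds outside a large compact set. On that compact set, local regularity gives boundedness directly, since $u$ is a finite energy weak solution, as in Mancini--Sandeep. Hence $V\in L^\infty$, and $|u(x)|\to 0$ is controlled by the local mass $\|u\|_{L^{2^*}(B(x,1))}$.

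\textbf{Decay via orbit packing.} This is the key step. Fix $x=(\rho,\theta)$ with $\rho$ large. Since $|u|$ is $\Ga$-invariant and $\Ga\subset O(N)$ acts by hyperbolic isometries fixing $0$, the balls $B(g_ix,1)$ all carry the same $L^{2^*}$ mass of $u$. Take $g_1,\dots,g_m$ with $|g_i\theta-g_j\theta|\ge\eta$. Hyperbolic distance at radius $\rho$ between points of angular separation $\eta$ is roughly $2\log(\eta\sinh\rho)$, up to constants. So choosing $\eta=c\,e^{-\rho}$ (with $c$ large, and $\rho$ large enough that $\eta<\eta_0$) makes the balls $B(g_ix,1)$ pairwise disjoint. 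By $(QOP)_{\Ga,d}$ we then get $m\ge C_0c^{-d}e^{d\rho}$, and so
\[
m\,\|u\|^{2^*}_{L^{2^*}(B(x,1))}\le\|u\|^{2^*}_{L^{2^*}(\bn)}.
\]
This yields $\|u\|_{L^{2^*}(B(x,1))}\lesssim e^{-d\rho/2^*}$, uniformly in $\theta$. Combined with the local estimate, $|u(x)|\lesssim e^{-\sigma d\rho/2^*}$, hence $V=O(e^{-\delta\rho})=o(\rho^{-1})$, uniformly in $\theta$.

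\textbf{Main obstacle.} The delicate point is the uniform local $L^\infty$ estimate in the critical case $p=2^*-1$. There, a local $L^{2^*}$ smallness, rather than a mere bound, is what makes the iteration work. The exponential orbit-packing bound supplies exactly this smallness, uniformly in $\theta$, once $\rho$ is large. I would first establish an $\eps$-regularity lemma on unit hyperbolic balls, using the Poincar\'e ball metric conformal to Euclidean on $B(x,1)$ after an isometry moving $x$ to $0$. I would then check carefully the geometric comparison between Euclidean angular separation on $\Sl^{N-1}$ and hyperbolic distance at radius $\rho$, so that the packing yields disjoint unit balls.
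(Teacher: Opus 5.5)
Your argument is correct, and its skeleton is the same as the paper's. You reduce to the Borthwick--Marzuola criterion \eqref{Marzuola and Borthwick result}, get $V\in L^\infty(\bn)$ from regularity, and get decay from three ingredients: the $\Ga$-invariance of $\abs{u}$, orbit packing, and finiteness of a global Lebesgue norm of $u$.

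\textbf{Where the routes differ.} The two proofs part ways in how local mass is converted into pointwise smallness.

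The paper takes $L=\norm{\nabla_{\bn}u}_{L^\infty(\bn)}$ from \Cref{u goes to 0}. With $a=\abs{u(x)}$, one has $\abs{u}\geq a/2$ on $B_\zeta(x)$ for $\zeta=a/(4L)$. Packing $m_\Ga(x,\zeta)\gtrsim(\sinh\rho/\zeta)^d$ disjoint such balls along $\Ga x$ into $\norm{u}_{L^2(\bn)}^2$ gives $(\sinh\rho)^d a^{N-d+2}\lesssim 1$. So the paper needs only qualitative regularity. The price is that it packs at a scale shrinking with $\abs{u(x)}$, trading the ball volume $\zeta^N$ against the count $\zeta^{-d}$.

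You pack balls of fixed radius instead. This uses $(QOP)_{\Ga,d}$ only at $\eta\sim e^{-\rho}$ and gives $m\gtrsim e^{d\rho}$ directly. The separation check you postpone is exactly the paper's bound $d_{\bn}(x_i,x_j)\geq 2\sinh^{-1}(\frac{\eta}{2}\sinh\rho)$, so $\eta=ce^{-\rho}$ with $c$ large does separate unit balls. In exchange, you need a centre-uniform estimate of $\abs{u(x)}$ by the mass of $u$ on $B_1(x)$. Both routes give exponential decay, far more than the $o(\rho^{-1})$ required.

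\textbf{The $\eps$-regularity step is unnecessary.} The step you flag as the main obstacle can be bypassed. Once $u\in L^\infty(\bn)$ is known, the coefficient $c=\la+\abs{u}^{p-1}$ in $-\De_{\bn}u=cu$ is uniformly bounded. You prove boundedness anyway, or you can cite \Cref{u goes to 0}. Then the linear local bound $\sup_{B_{1/2}(x)}\abs{u}\leq C\norm{u}_{L^2(B_1(x))}$ holds with $C$ independent of $x$, by homogeneity of $\bn$. No smallness hypothesis is needed, even for $p=2^*-1$. Combined with your unit-ball packing in $L^2$, this gives $\abs{u(x)}\lesssim e^{-d\rho/2}$ at once.
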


\medskip

The quantitative orbit packing condition also has a compactness consequence. For the block-radial functions, the growth of the group orbit forces the $L^q$-mass contained in a fixed geodesic ball $B_\zeta(x)$ to decrease as $d_{\bn}(0,x) \to \infty$. Moreover, the decay rate reflects the geometry and dimensions of the individual blocks. This decay yields compactness of the corresponding Sobolev embedding in the subcritical range. The details are in the Appendix (\Cref{Decay and compactness}).

\medskip

The paper is organized as follows.

\medskip

\begin{enumerate}
	\item[A.] \Cref{notations and preliminaries}: We set up the geometric and variational framework used throughout this article. We introduce the equivariant Sobolev spaces associated with closed subgroups of $O(N)$, together with some properties involving the orbit and fixed point set. Finally, we define a corresponding group representation on the Sobolev space and a group-averaging projection onto the equivariant subspace.
	
	\item [B.] \Cref{Limit problem}: We study the Euclidean limit problem associated with the critical equation and establish a strict comparison between the equivariant energy level of the hyperbolic problem and that of the Euclidean limit problem.
	
	\item[C.] \Cref{section for existence}: We prove that the equivariant variational level for the critical equation is attained. The compactness argument depends on the fixed point set of the group; accordingly, this section has been divided. We then provide explicit examples of groups that satisfy our assumptions. Applying the preceding existence results to these groups, we obtain the existence and multiplicity result for nonradial sign-changing solutions stated in \Cref{Existence theorem}.
	
	\item[D.] \Cref{non-existence section}: We turn to the spectral regime $\la > \la_1$. We show how the condition $(QOP)_{\Ga,d}$ ensures the pointwise decay estimates of symmetric solutions. Then we prove the non-existence result in \Cref{Non-existence theorem for higher eigenvalues}.
	
	\item[E.] \Cref{Appendix}: We present two auxiliary results. First, we prove the strong continuity of the group action. We then study block radial Sobolev functions and derive a quantitative decay estimate for their local $L^q$-mass. As a consequence, we get compactness of the corresponding equivariant Sobolev embedding in the subcritical case.
\end{enumerate}

\section{Notations and Preliminaries} \label{notations and preliminaries}
We start by recalling the Poincar\'e ball model of hyperbolic space, denoted by $\bn$, which is  the open unit ball $B(0,1)$ endowed with the Riemannian metric $g_{\bn}$ with $\(g_{\bn}\)_{ij} = \(\frac{2}{1 - |x|^2}\)^{2} \delta_{i j}$. The hyperbolic volume element, denoted as $\mathrm{~d} V_{\bn}$, can be expressed as $\mathrm{~d} V_{\bn} = \(\frac{2}{1 - |x|^2}\)^{N} \mathrm{~d} x$. The hyperbolic distance between $x,y \in \bn$ is given by
\begin{align}
	d_{\bn}(x,y) & := \cosh^{-1} \( 1 + \f{2|x-y|^2}{\(1-|x|^2\)\(1-|y|^2\)}\), \label{hyperbolic metric}
\end{align}
and we will denote $\rho(x) := d_{\bn}(0,x) = \log \(\f{1+|x|}{1-|x|}\)$. The geodesic ball and geodesic sphere in $\bn$ with center $x \in \bn$ and radius $r > 0$ are defined respectively by
\begin{align*}
	B_r(x) := \{ y \in \bn : d_{\bn}(x,y) < r\}, \ \ 
	S_r(x) := \{ y \in \bn : d_{\bn}(x,y) = r\}.
\end{align*}
If $x =0$, then the notations for geodesic ball and sphere are $B_r$ and $S_r$ respectively. From now on, we will denote the Euclidean balls $B(x,r)$ by $\Bee(x,r)$, and simply $\Bee$ for $B(0,1)$. Also, we note the relation $B_r = \Bee\(0, \tanh \f{r}{2}\)$.\\

The geodesic polar coordinate is given by $x = (\rho, \theta)$, where $\rho \in \R_+ \cup \{0\}$ and $\theta \in \Sl^{N-1}$. Then the hyperbolic metric can be rewritten as
\begin{align*}
	g_{\bn} = d\rho^2 + (\sinh \rho)^2 g_{\Sl^{N-1}},
\end{align*}
where $g_{\Sl^{N-1}}$ is the standard round metric on sphere. Furthermore, for the same $x \in \Bee$, the Euclidean polar coordinate will be denoted as $x = \(s, \theta\)$, where $s = \tanh \f{\rho}{2} \in [0,1)$.\\

\textbf{Hyperbolic translation:} For $b \in \bn$, we define the hyperbolic translation $\tau_b : \bn \rightarrow \bn$ as
\begin{align*}
\tau_b(x) &= \f{\(1-|b|^2\)x + \(|x|^2 + 2x \cdot b + 1\)b} {|b|^2|x|^2 + 2 x \cdot b + 1}.
\end{align*}
Note $\tau_b(0) = b, \, \forall\, b \in \bn$, and it acts as a translation along the line $\(- \f{b}{|b|}, \f{b}{|b|}\)$. To obtain further details on this topic, we recommend consulting the reference \cite{Ratcliffe}.

\subsection{Sobolev space on $\bn$ and a sharp Poincar\'e-Sobolev inequality:} 
Define
\begin{align*}
H^1(\bn) & := \{u \in L^2 (\bn) \;:\; \nabla_{\bn} u \in L^2\(\bn\)\},
\end{align*}
with the norm $\norm{u}_{H^1(\bn)} := \[\int_{\bn} \[u^2+\abs{\nabla_{\bn} u}^2\] \mathrm{~d} V_{\bn}\]^{\f{1}{2}}$. We also recall the sharp Poincar\'e-Sobolev inequality: For $N \geq 3$, $\la \leq \f{(N-1)^2}{4}$, and $p \in \left(1, \frac{N+2}{N-2} \right]$, there exists an optimal constant 
$S_{\la,p} > 0$ such that the following holds:
\begin{equation}
S_{\la,p} \left( \int_{\bn} |u|^{p + 1} \mathrm{~d} V_{\bn} \right)^{\frac{2}{p + 1}} \leq \int_{\bn} \left[|\nabla_{\bn} u|^{2}
- \la u^{2}\right] \mathrm{~d} V_{\bn}, \label{Poincare-Sobolev}
\end{equation}
for every $u \in H^1(\bn)$.\\

We also recall for $N \geq 3$, the classical Sobolev inequality: there exists $S>0$ (the best Sobolev constant) such that
\begin{equation}
	S \( \int_{\rn} |u|^{2^*} \mathrm{~d}x \)^{\f{2}{2^*}} \leq \int_{\rn} \abs{\nabla u}^2 \mathrm{~d}x, \quad \forall u \in C_c^\infty\(\rn\). \label{Sobolev}
\end{equation}
It is well known that $S_{\lambda, \; 2^*-1} < S$ if $\lambda > \f{N(N-2)}{4}$ and $S_{\lambda, \; 2^*-1} = S$ for $\lambda \leq \f{N(N-2)}{4}.$\\

 The bottom of the $L^2$-spectrum of $-\De_{\bn}$ is defined as
\begin{align*}
\la_1 \(\bn\) := \inf_{u \in H^1\(\bn\) \backslash \{0\}} \f{\int_{\bn} \abs{\nabla_{\bn} u}^2\mathrm{~d} V_{\bn} }{\int_{\bn} u^2 \mathrm{~d} V_{\bn}} = \f{(N-1)^2}{4}.
\end{align*}
Consequently, we have that for any $\la < \f{(N-1)^2}{4}$,
$ \norm{u}_\la := \[\int_{\bn} \[\abs{\nabla_{\bn} u}^2 - \la u^2\] \mathrm{~d} V_{\bn}\]^{\f{1}{2}}$
is an equivalent norm on $H^1\(\bn\)$, and let us denote the corresponding inner product by  $\< \cdot, \cdot \>_\la$. \\

\begin{lemma} \label{change of variable}
	For any two elements $u$ and $v$ belonging to the space $H^1(\bn)$, the following holds:
	\begin{align*}
		(i) \;& \int_{\bn} \< \nabla_{\bn} \(u \circ \tau_b\)\,,\, \nabla_{\bn}\(v \circ \tau_b\) \>_{\bn} \mathrm{~d} V_{\bn} = \int_{\bn}  \<\(\nabla_{\bn} u\) \circ \tau_b \,,\, \(\nabla_{\bn} v\) \circ \tau_b\>_{\bn} \mathrm{~d} V_{\bn}.\\
		(ii)\; & \int_{\bn} \(u \circ \tau_b\)(x) v(x) \mathrm{~d} V_{\bn} = \int_{\bn} u(x) \(v \circ \tau_{-b}\)(x) \mathrm{~d} V_{\bn}.
	\end{align*}
	Additionally, for any open subset $\Omega$ of $\bn$ and a measurable function $u$, we obtain
	\begin{align*}
		\int_{\Omega} \abs{u \circ \tau_b}^p \mathrm{~d} V_{\bn} & = \int_{\tau_b\(\Omega\)} \abs{u}^p \mathrm{~d} V_{\bn}, \quad 1 \leq p < \infty,
	\end{align*}
	assuming that the integrals have finite values.
\end{lemma}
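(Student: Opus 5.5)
The plan is to use one fact: each hyperbolic translation $\tau_b$ is an isometry of $(\bn, g_{\bn})$, with inverse $\tau_b^{-1} = \tau_{-b}$. All three identities then follow from the invariance of the Riemannian volume and of the metric under isometries, and from the chain rule for pullbacks.

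\textbf{Step 1: isometry and inverse.} First I will check that $\tau_b$ is a M\"obius transformation of the unit ball preserving $g_{\bn}$. In the model $\tau_b$ is the composition of an inversion in the sphere orthogonal to $\partial\Bee$ with a reflection, so this is classical (see \cite{Ratcliffe}). Concretely, one verifies
\begin{align*}
\f{2}{1-|\tau_b(x)|^2}\,\abs{D\tau_b(x)\xi} = \f{2}{1-|x|^2}\,|\xi| \quad \text{for all } \xi \in \rn .
\end{align*}
Next I need $\tau_{-b}\circ\tau_b = \mathrm{id}$. Both maps are M\"obius maps of $\Bee$ moving points along the line through $0$ and $b$, and the composition fixes $0$ because $\tau_{-b}(b)=0$, which is a direct computation from the formula. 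Its derivative at $0$ is the identity. A M\"obius isometry fixing $0$ is orthogonal, so the composition is the identity.

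\textbf{Step 2: change of variables.} Since $\tau_b$ is an isometry, its Jacobian with respect to $\dv$ equals $1$. Equivalently,
\begin{align*}
|\det D\tau_b(x)| \(\f{2}{1-|\tau_b(x)|^2}\)^N = \(\f{2}{1-|x|^2}\)^N .
\end{align*}
The Euclidean change of variables $y=\tau_b(x)$ then gives
\begin{align*}
\int_\Omega |u\circ\tau_b|^p \dv = \int_{\tau_b(\Omega)} |u|^p \dv .
\end{align*}
This holds first for nonnegative measurable $u$, and then in general whenever the integrals are finite. For (ii), substitute $y=\tau_b(x)$, so $x=\tau_{-b}(y)$, and apply the same measure invariance to $(u\circ\tau_b)\,v$. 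This yields
\begin{align*}
\int_{\bn} u(y)\,(v\circ\tau_{-b})(y) \dv ,
\end{align*}
and Cauchy--Schwarz guarantees integrability for $u,v\in L^2$.

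\textbf{Step 3: gradients, identity (i).} For smooth $u$, an isometry satisfies
\begin{align*}
\< \nabla_{\bn}(u\circ\tau_b), \nabla_{\bn}(v\circ\tau_b)\>_{\bn}(x) = \< (\nabla_{\bn}u)(\tau_b x), (\nabla_{\bn}v)(\tau_b x)\>_{\bn} .
\end{align*}
The reason is that $d(u\circ\tau_b)_x = du_{\tau_b x}\circ d(\tau_b)_x$ and $d(\tau_b)_x$ is a linear isometry between tangent spaces. Here the right-hand side uses the metric at $\tau_b x$, which is how I read the statement's $\<(\nabla u)\circ\tau_b,\cdot\>$. Integrating this pointwise identity gives (i), and by Step 2 both sides also equal $\int \<\nabla u,\nabla v\>\dv$. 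To pass to general $u,v\in H^1(\bn)$, I will use density of $C_c^\infty(\bn)$ in $H^1(\bn)$. Step 2 and the pointwise identity show that $u\mapsto u\circ\tau_b$ is an $H^1$-isometry, so both sides of (i) are continuous bilinear forms on $H^1(\bn)$ and the identity extends by density.

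\textbf{Main obstacle.} The only real computation is the conformal-factor identity in Step 1, namely that $\tau_b$ is a $g_{\bn}$-isometry. I would either cite \cite{Ratcliffe} or verify it via the standard identity
\begin{align*}
1-|\tau_b(x)|^2 = \f{(1-|b|^2)(1-|x|^2)}{|b|^2|x|^2+2x\cdot b+1},
\end{align*}
together with the conformality of M\"obius maps, $|D\tau_b(x)\xi| = \f{1-|b|^2}{|b|^2|x|^2+2x\cdot b+1}\,|\xi|$.
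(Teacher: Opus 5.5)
Your proof is correct. The paper states this lemma without proof and relies on exactly the facts you establish (with Ratcliffe as background): $\tau_b$ is an isometry of $(\bn, g_{\bn})$ with $\tau_b^{-1}=\tau_{-b}$, so the volume $\mathrm{d}V_{\bn}$ is preserved and gradients transform by a linear isometry, followed by density for $H^1$. The one step you leave implicit, that $\tau_{-b}\circ\tau_b$ has derivative $I$ at $0$, follows by composing $D\tau_b(0)=(1-|b|^2)I$ with $D\tau_{-b}(b)=(1-|b|^2)^{-1}I$, both obtained from a direct first-order expansion of the formula.
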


\medskip

\subsection{Conformal change of metric:} 
For a conformal diffeomorphism $f: (M_1, g_1) \to (M_2, g_2)$ with $f^*g_2 = \Phi^{\frac{4}{N-2}}g_1$, the Conformal Laplacian $L_g := -\Delta_g + \frac{N-2}{4(N-1)}S_g$ ($S_g$ is the scalar curvature) satisfies the covariance property:
\begin{align*}
	L_{g_1}(\Phi(v \circ f)) = \Phi^{\frac{N+2}{N-2}} (L_{g_2} v) \circ f.
\end{align*}
In particular, if $v$ solves $L_{g_2}v = |v|^{2^*-2}v$ on $M_2$ if and only if $u = \Phi(v \circ f)$ solves $L_{g_1}u = |u|^{2^*-2}u$ on $M_1$; moreover the $L^{2^*}$-norm is preserved: $\|u\|_{L^{2^*}(M_1)} = \|v\|_{L^{2^*}(M_2)}$. In particular, for $p = 2^*-1$, and $\Phi = h^{\frac{N-2}{2}}$, where $h(x) = \frac{2}{1-|x|^2}$. the equation \eqref{(1)} transformed to 
\begin{equation}
	- \Delta v - \tilde{\lambda} h^2 v = |v|^{2^*-2}v, \quad v \in H_0^1(\Bee), \label{conformal equation} \tag{$CEq_\la$}
\end{equation}
where $\tilde{\lambda} = \lambda - \frac{N(N-2)}{4}$.

When $\f{N(N-2)}{4} < \la < \f{(N-1)^2}{4}$, or equivalently, $0 < \tilde{\la} < \frac{1}{4}$, the quadratic term: 
\Be Q(v)^{\f{1}{2}} := \(\int_{\Bee} \abs{\nabla v}^2 \dx - \tilde{\la} \int_{\Bee} h^2 v^2 \dx\)^{\f{1}{2}}\Nee
 is an equivalent norm on $H_0^1(\Bee)$. This follows from the conformal version of the Poincar\'e-Sobolev inequality, which we get by substituting the conformal factor $u = \(\frac{2}{1-|x|^2}\)^{- \frac{N-2}{2}}v$ in \eqref{Poincare-Sobolev};
\begin{align*}
	S_{\la,p} \(\int_{\Bee} |v|^{p+1} \(\frac{2}{1-|x|^2}\)^{\al (p)}  \dx \)^{\frac{2}{p+1}}  \leq \int_{\Bee} \[ |\nabla v|^2 - \tilde{\la} \(\frac{2}{1-|x|^2}\)^2 v^2 \dx\], \quad \forall v \in H_0^1(\Bee),
\end{align*} 
where $p \in \Big{(} 1 , \frac{N+2}{N-2} \Big{]}$ and $\al(p) = N - (p+1) \frac{N-2}{2}.$\\

\subsection{Equivariant functions }

\vspace*{0.3cm}
We shall now introduce the notion of group action and form a symmetric space of functions consisting only nonradial sign-changing functions. Let $\Ga$ be a closed subgroup of $O(N)$, then $\Ga$ acts on $\rn$ through the action
\begin{align*}
	\Ro : \Ga \times \rn \to \rn \; \text{such that} \; \(g,x\) \mapsto \Ro \(g,x\) = gx.
\end{align*}
For this action, we define
\begin{enumerate}
	\item $\Ga (x) :=\{gx\;:\; g \in \Ga \}$, for $x \in \rn$, to be the orbit of $x$.
	\item $\Ga_x := \{g \in \Ga : gx=x\}$ is the isotropy subgroup of $x \in \rn$.
	\item $\mathrm{Fix}_{\rn}(\Ga) := \{x \in \rn \;:\; gx=x, \forall g \in \Ga \}$ is the fixed point set under the group action.
\end{enumerate}

\medskip

Since orthogonal matrices preserve the Euclidean norm, it follows from equation (\ref{hyperbolic metric}) that the hyperbolic metric $d_{\bn}$ is also preserved by the group action of $O(N)$. Therefore, a closed subgroup $\Gamma$ of $O(N)$ produces the same action on $\bn$. Similarly, we can define the orbit and isotropy subgroup for a point $x \in \bn$; they will be the same as in Euclidean space. We denote the fixed point set in $\bn$ to be $\mathrm{Fix}_{\bn}(\Ga) := \{x \in \bn : gx = x, \; \forall g \in \Ga\}$.

\medskip

Let $\Omega$ be a $\Ga$-invariant domain in $\rn$ or $\bn$, not necessarily bounded; i.e., $\Ga (x) \subset \Omega, \; \forall x \in \Omega$. Now we define equivariant functions:\\

\begin{definition}
	Consider a continuous homomorphism $\phi$ from the group $\Ga$ onto the group $\mathbb{Z}_2$, where $\mathbb{Z}_2$ is the group consisting of the elements 1 and -1. A function $u: \Omega \to \R$ is considered $\phi$-equivariant if it satisfies the equation
	\begin{align}
		u(g x)=\phi(g)u(x),\qquad \forall g \in \Ga\ , x \in \Omega. \label{equivariant map}
	\end{align}
\end{definition}
We observe that $u$ is $\Lambda := \mathrm{ker}(\phi)$ invariant, and if $\phi$ is onto, then every nonzero $\phi$-equivariant function is nonradial and exhibits sign-changing behavior. This is true even for hyperbolic space $\bn$, as we know the geodesic sphere $S_r$ coincides with the Euclidean sphere $S\(0, \tanh \f{r}{2}\)$, for any $r \in (0,\infty)$.\\

Throughout this article, we make two assumptions about the group $\Ga$ and the onto homomorphism $\phi$ as:
\begin{equation}
	\exists \,\,\,\zeta \in \Bee \subset \rn \text{ \,  such \ that \,\, } \Ga_\zeta \subset \mathrm{ker} \,\phi. \label{(A_1)} \tag{$\mathbf{A_1}$}
\end{equation}
\begin{equation}
	\text{And, for every }  x\in \rn  \,, \text{ either    }  \#\Ga (x)=\infty \text{   or } \Ga (x)=\{x\}. \label{(A_2)} \tag{$\mathbf{A_2}$}
\end{equation}
Here we note that the condition \eqref{(A_1)} is a stricter condition than the one taken in \cite{Clapp_Rios}. In fact, this is necessary as we are working on the Poincar\'e ball.\\

Let $X$ be one of the three Sobolev spaces $H_0^1\(\Bee\), D^{1,2}\(\rn\)$, and $H^1\(\bn\)$. Then the collection of $\phi$-equivariant functions in $X$ forms an infinite-dimensional closed subspace, we denote it by
\begin{align*}
	X^\phi := \{ u \in X : u \; \text{is} \; \phi \text{-equivariant}\}.
\end{align*}
The condition \eqref{(A_1)} ensures that the Hilbert spaces $X^\phi$ are infinite dimensional (See \cite{Cingolani_Clapp_Secchi} and \cite{Manna_Manna1}). \\

We conclude this subsection by recalling two useful consequences of the group action, which will be used throughout the article. The first provides a dichotomy for sequences in $\bn$ in terms of the behavior of their $\Ga$-orbits, while the second shows that the equivariant subspace $H^1\(\bn\)^\phi$ is preserved under the hyperbolic translations by the points belonging to $\mathrm{Fix}_{\bn}(\Ga)$. The proofs can be found in \cite{Manna_Manna1}.\\

\begin{lemma}\label{Dichotomy result} 
	Assume that condition $(\ref{(A_2)})$ holds. For any sequence $\{x_n\} \subset \bn$, up to a subsequence, the following dichotomy holds:\\
	\begin{enumerate}
		\item For every $k \in \N$, there exists a subset $\{g_1, \cdots, g_k\}$ of $\Ga$ such that
		\begin{align*}
			d_{\bn} \(g_i x_n, g_j x_n\) \rightarrow &\infty, \; \text{as} \; n \rightarrow \infty, \; \text{for} \; i \neq j.
		\end{align*}
		\item There exists a sequence $\{b_n\} \subset \bn$ and a constant $R>0$ such that $\{b_n\} \subset \mathrm{Fix}_{\bn}(\Ga)$ and
		\begin{align*}
			d_{\bn} \(b_n, \Ga (x_n)\) & < R, \quad \forall \, n \in \N.
		\end{align*}
	\end{enumerate}
\end{lemma}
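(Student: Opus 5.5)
Since $O(N)$ acts on $\bn$ by hyperbolic isometries fixing the origin, I will split according to whether the orbits $\Ga(x_n)$ stay near the origin in a uniform way or spread out. Pass to a subsequence so that $x_n/|x_n| \to \theta_0 \in \Sl^{N-1}$ whenever $x_n \neq 0$, and so that $\rho(x_n) = d_{\bn}(0,x_n)$ either stays bounded or tends to infinity.

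\textbf{Bounded case.} If $\rho(x_n)$ stays bounded, alternative (2) holds with $b_n = 0 \in \mathrm{Fix}_{\bn}(\Ga)$. Indeed, $d_{\bn}(0, \Ga(x_n)) = \rho(x_n) < R$.

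\textbf{Unbounded case, $\rho(x_n) \to \infty$.} Write $x_n = s_n\theta_n$ with $s_n \to 1$. Decompose $\rn = F \oplus F^\perp$, where $F := \mathrm{Fix}_{\rn}(\Ga)$; both summands are $\Ga$-invariant. Write $\theta_n = \alpha_n + \beta_n$ with $\alpha_n \in F$ and $\beta_n \in F^\perp$. By (A$_2$), every nonzero $y \in F^\perp$ has an infinite orbit, since $\Ga(y) = \{y\}$ would force $y \in F$.

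\textbf{Subcase (a): $\beta_n \to \beta \neq 0$ along a subsequence.} The orbit $\Ga(\beta)$ is infinite, so for each $k$ we can pick $g_1,\dots,g_k \in \Ga$ with the points $g_i\beta$ pairwise distinct. Since $g\theta_n = \alpha_n + g\beta_n$, we get $|g_i\theta_n - g_j\theta_n| \to |g_i\beta - g_j\beta| > 0$. Using the distance formula \eqref{hyperbolic metric} with $|g_ix_n| = |x_n| = s_n \to 1$,
\begin{align*}
\cosh d_{\bn}(g_ix_n, g_jx_n) - 1 = \frac{2 s_n^2 |g_i\theta_n - g_j\theta_n|^2}{(1-s_n^2)^2} \to \infty .
\end{align*}
A diagonal argument over $k$ gives a single subsequence that works for every $k$, which is alternative (1).

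\textbf{Subcase (b): $\beta_n \to 0$.} Set $b_n := s_n \alpha_n/|\alpha_n| \in F \cap \bn = \mathrm{Fix}_{\bn}(\Ga)$, noting $|\alpha_n| \to 1$. Since $d_{\bn}(b_n,\Ga(x_n)) \le d_{\bn}(b_n,x_n)$, we need $d_{\bn}(b_n,x_n)$ bounded. Both points have Euclidean norm $s_n$, so
\begin{align*}
\cosh d_{\bn}(b_n,x_n) - 1 = \frac{2 s_n^2 |\theta_n - \alpha_n/|\alpha_n||^2}{(1-s_n^2)^2}.
\end{align*}
Here $|\theta_n - \alpha_n/|\alpha_n|| \approx |\beta_n|$, while $1-s_n^2 \approx 4e^{-\rho_n}$, so we need $|\beta_n| \lesssim e^{-\rho_n}$. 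Knowing only $\beta_n \to 0$ does not give this, and this is the main obstacle.

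\textbf{Refined split in subcase (b).} Apply the dichotomy at the rate $\lambda_n := |\beta_n|/(1-s_n^2)$.
\begin{itemize}
\item If $\lambda_n$ stays bounded, the display above shows $d_{\bn}(b_n,x_n)$ is bounded, giving alternative (2).
\item If $\lambda_n \to \infty$, normalize $\hat\beta_n := \beta_n/|\beta_n|$ and pass to a limit $\hat\beta \in F^\perp$ with $|\hat\beta| = 1$. Its orbit is infinite, so choose $g_i$ with $|g_i\hat\beta - g_j\hat\beta| = c_{ij} > 0$. Then $|g_i\theta_n - g_j\theta_n| = |\beta_n|\,|g_i\hat\beta_n - g_j\hat\beta_n| \ge c_{ij}|\beta_n|/2$ for large $n$, so
\begin{align*}
\cosh d_{\bn}(g_ix_n,g_jx_n) - 1 \gtrsim \lambda_n^2 \to \infty .
\end{align*}
Diagonalizing over $k$ as before gives alternative (1).
\end{itemize}

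This rescaling step is what handles fixed directions approached at slow rates. The remaining work is routine bookkeeping of subsequences.
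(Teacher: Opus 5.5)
Your proof is correct. The paper gives no in-text argument to compare with: it defers the proof to the authors' earlier article. So your proposal is an independent, self-contained proof of the lemma. It rests on three ingredients:
\begin{itemize}
\item the orthogonal splitting $\mathbb{R}^N=F\oplus F^{\perp}$ with $F=\mathrm{Fix}_{\mathbb{R}^N}(\Gamma)$, where both summands are $\Gamma$-invariant because $\Gamma\subset O(N)$;
\item the reformulation of $(\mathbf{A_2})$ as ``every nonzero vector of $F^{\perp}$ has an infinite orbit'';
\item the explicit Poincar\'e-ball distance for points of equal Euclidean norm.
\end{itemize}
The decisive quantity is your rescaled size $\lambda_n=|\beta_n|/(1-s_n^2)$. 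Your ``$\approx$'' can be made exact by the identity $|\theta_n-\alpha_n/|\alpha_n||^2=2(1-|\alpha_n|)=2|\beta_n|^2/(1+|\alpha_n|)$. This gives $\cosh d_{\mathbb{B}^N}(b_n,x_n)-1\le 4\lambda_n^2$, and the normalized limit $\hat\beta$ handles $\lambda_n\to\infty$ exactly as you describe.

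Two simplifications are worth noting.

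\textbf{No diagonal argument is needed.} Once the subsequence with $\beta_n\to\beta$ (or $\hat\beta_n\to\hat\beta$) is fixed, you choose $g_1,\dots,g_k$ from the limit vector. The divergence then holds along that same subsequence for every $k$.

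\textbf{The case distinctions collapse into one.} Let $P_F,P_{F^\perp}$ be the orthogonal projections, and set $b_n:=P_Fx_n\in F\cap\mathbb{B}^N$ and $\mu_n:=|P_{F^\perp}x_n|/(1-|x_n|^2)$.
\begin{itemize}
\item Since $1-|b_n|^2\ge 1-|x_n|^2$, you get $\cosh d_{\mathbb{B}^N}(b_n,x_n)-1\le 2\mu_n^2$.
\item You also get $\cosh d_{\mathbb{B}^N}(g_ix_n,g_jx_n)-1=2\mu_n^2\,|g_i\hat\beta_n-g_j\hat\beta_n|^2$.
\end{itemize}
So the single split ``$\mu_n$ bounded / $\mu_n\to\infty$'' covers your bounded case, subcase (a), and both halves of subcase (b). It also avoids the renormalization by $|\alpha_n|$, which requires $\alpha_n\neq 0$.

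Compared with importing the lemma, your route makes the paper self-contained on this point. It also gives a quantitative criterion for which alternative occurs, with the fixed points $b_n$ given explicitly as projections of $x_n$ onto $\mathrm{Fix}_{\mathbb{R}^N}(\Gamma)$.
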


\begin{lemma}\label{Equivariant subspaces are closed under translations through the fixed points}
	Consider an element $b$ from $ \bn$ and let $\tau_b$ represent the hyperbolic translation by $b$. If $b \in \mathrm{Fix}_{\bn}(\Ga)$, then for each $u$ in $H^1(\bn)^{\phi}$, it follows that $u \circ \tau_b$ is also in $H^1(\bn)^{\phi}$.
\end{lemma}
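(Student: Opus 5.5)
The statement to prove is \Cref{Equivariant subspaces are closed under translations through the fixed points}. The plan is a direct verification: show that $u \circ \tau_b$ lies in $H^1(\bn)$, then that it is $\phi$-equivariant.

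\textbf{Step 1: $u\circ\tau_b\in H^1(\bn)$.} This follows from \Cref{change of variable}. With $v=u$ and $p=2$ in the $L^p$ identity, taking $\Omega=\bn$ and using $\tau_b(\bn)=\bn$, we get $\norm{u\circ\tau_b}_{L^2}=\norm{u}_{L^2}$. Part (i) of the same lemma, again with $v=u$, gives $\norm{\nabla_{\bn}(u\circ\tau_b)}_{L^2}=\norm{\nabla_{\bn}u}_{L^2}$. Both identities rest on $\tau_b$ being an isometry of $(\bn,g_{\bn})$.

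\textbf{Step 2: equivariance.} The key claim is the commutation relation
\begin{align*}
	g\,\tau_b(x)=\tau_{gb}(gx) \qquad \text{for all } g\in O(N),\ x\in\bn .
\end{align*}
To check it, insert $gx$ and $gb$ into the explicit formula for $\tau_b$. Orthogonality gives $|gx|=|x|$, $|gb|=|b|$ and $gx\cdot gb=x\cdot b$. Hence the scalar coefficients $1-|b|^2$, $|x|^2+2x\cdot b+1$ and the denominator are unchanged, and the numerator vectors become $g x$ and $g b$. By linearity of $g$ the whole expression equals $g\,\tau_b(x)$.

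Now let $b\in\mathrm{Fix}_{\bn}(\Ga)$, so $gb=b$ for every $g\in\Ga$. The relation above reduces to $\tau_b(gx)=g\,\tau_b(x)$. Therefore, for $g\in\Ga$ and $x\in\bn$,
\begin{align*}
	(u\circ\tau_b)(gx)=u\bigl(g\,\tau_b(x)\bigr)=\phi(g)\,u(\tau_b(x))=\phi(g)\,(u\circ\tau_b)(x).
\end{align*}

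\textbf{Main subtlety.} Elements of $H^1$ are defined only almost everywhere, so the identity \eqref{equivariant map} should be read as holding a.e.\ for each $g$. This is compatible with the argument because $\tau_b$ and $g$ are diffeomorphisms preserving the hyperbolic volume, so they map null sets to null sets. The a.e.\ identity for $u$ therefore transfers to $u\circ\tau_b$, which completes the proof.
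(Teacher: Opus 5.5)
Your proof is correct and follows essentially the standard argument that the paper defers to \cite{Manna_Manna1}: $\tau_b$ is an isometry of $\bn$, so it preserves $H^1(\bn)$. (Strictly, part (i) of \Cref{change of variable} must be combined with the volume-preservation identity applied to $\abs{\nabla_{\bn}u}$ to get equality of the gradient norms; this is only a small omission.) For $b\in\mathrm{Fix}_{\bn}(\Ga)$, orthogonality of $g$ leaves every scalar coefficient in the formula for $\tau_b$ unchanged, so $\tau_b(gx)=g\,\tau_b(x)$, and this carries the $\phi$-equivariance of $u$ over to $u\circ\tau_b$.
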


\subsection{Representation of a compact group and Bochner integral}

Let $\Ga \subset O(N)$ be a compact subgroup. Let us denote $I$ be the identity matrix in $\Ga$ and $\mathcal{I}$ be the identity map on $H^1\(\bn\)$. Now, we recall:

\medskip

\begin{definition}[Representation of a topological group]
	Let $X$ be a topological vector space, and $G$ a topological group. A representation of $G$ over $X$ is a map $T : G \to BL(X)$ (set of bounded linear maps) such that:
	
	\medskip
	
	\begin{enumerate}
		\item $T$ is a group homomorphism:
		\begin{align*}
			T(I) = \mathcal{I}, \quad T(g_1 g_2) = T(g_1) T(g_2), \quad \forall g_1, g_2 \in G;
		\end{align*}
		\item $T$ is strongly continuous:
		\begin{align*}
			\lim\limits_{g \to I} T(g) x = x, \quad \forall x \in X.
		\end{align*}
	\end{enumerate}
\end{definition}

For our study, let us define $T : \Ga \to BL\(H^1\(\bn\)\)$ such that
\begin{align}
	T(g)(u)(\cdot) := T_g(u)(\cdot) = \phi(g) u\(g^{-1} \cdot \), \quad \forall g \in \Ga, \; \text{and} \; u \in H^1\(\bn\). \label{Representation definition}
\end{align}
Using \Cref{change of variable}, we have that $\norm{u}_\la = \norm{T_gu}_\la, \; \forall g \in \Ga$ and $u \in H^1\(\bn\)$, i.e., the function is well defined. Now, we have the following lemma (the proof is in \Cref{Appendix1}):

\medskip

\begin{lemma}\label{Representation}
	$T$ is a representation of $\Ga$ over $H^1\(\bn\)$.
\end{lemma}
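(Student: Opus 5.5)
The plan is to verify the two defining properties directly, using that $\phi:\Ga\to\mathbb{Z}_2$ is a continuous homomorphism and that each $g\in\Ga\subset O(N)$ is a hyperbolic isometry of $\bn$.

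\textbf{Homomorphism property and boundedness.} Since $\phi(I)=1$, we get $T_I u = u$ at once. For $g_1,g_2\in\Ga$,
\begin{align*}
T_{g_1}(T_{g_2}u)(x)=\phi(g_1)\,(T_{g_2}u)(g_1^{-1}x)=\phi(g_1)\phi(g_2)\,u(g_2^{-1}g_1^{-1}x)=\phi(g_1g_2)\,u\((g_1g_2)^{-1}x\),
\end{align*}
which is exactly $T_{g_1g_2}u(x)$. Linearity of $T_g$ is clear. Boundedness follows from the isometry invariance already noted after \eqref{Representation definition}: $\norm{T_gu}_\la=\norm{u}_\la$. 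Concretely, $x\mapsto g^{-1}x$ preserves $\dv$ and the metric, so both the $L^2$ norm and the gradient norm are invariant. Hence each $T_g$ is in fact a unitary operator on $H^1(\bn)$.

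\textbf{Strong continuity.} This is the main step. Because every $T_g$ has operator norm $1$, an $\eps/3$ argument reduces the claim to a dense subset. For $u\in H^1(\bn)$, pick $\psi\in C_c^\infty(\bn)$ with $\norm{u-\psi}_{H^1}<\eps$. Then
\begin{align*}
\norm{T_gu-u}\le \norm{T_g(u-\psi)}+\norm{T_g\psi-\psi}+\norm{\psi-u}\le 2\eps+\norm{T_g\psi-\psi}.
\end{align*}
So it suffices to show $\norm{T_g\psi-\psi}_{H^1}\to0$ as $g\to I$.

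Since $\phi$ is continuous into the discrete group $\mathbb{Z}_2$, $\ker\phi$ is an open neighbourhood of $I$. For $g$ close to $I$ we therefore have $\phi(g)=1$ and $T_g\psi=\psi\circ g^{-1}$. Let $\psi$ be supported in a Euclidean ball $\Bee(0,r)$ with $r<1$. Then every $\psi\circ g^{-1}$ is supported in the same ball, because $O(N)$ preserves $|x|$. On this compact set the conformal factor $h$ is bounded above and below, so the $H^1(\bn)$ norm is equivalent to the Euclidean $H^1$ norm there.

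Now $\psi\circ g^{-1}\to\psi$ uniformly as $g\to I$, by uniform continuity of $\psi$ and $|g^{-1}x-x|\le\norm{g^{-1}-I}$ for $|x|\le 1$. Similarly, $\nabla(\psi\circ g^{-1})=g\,(\nabla\psi)\circ g^{-1}\to\nabla\psi$ uniformly, using uniform continuity of $\nabla\psi$ and $\norm{g-I}\to0$. Uniform convergence on a fixed bounded support gives convergence in $L^2$, and hence $\norm{T_g\psi-\psi}_{H^1(\bn)}\to0$. This completes the proof.

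The only real subtlety is handling $\phi(g)$ near $I$; the openness of $\ker\phi$ removes it. Everything else is routine density together with invariance of the norm under $O(N)$.
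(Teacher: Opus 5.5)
Your proposal is correct and follows essentially the same route as the paper: you reduce to $C_c^\infty$ functions by density and the isometry property of each $T_g$, use that $\ker\phi$ is an open neighbourhood of the identity to drop the sign factor, and conclude from the uniform convergence of $\psi\circ g^{-1}$ and its gradient on a fixed compact support, where the conformal factor is bounded. Your explicit check of the homomorphism identity and of the $L^2$ term only fills in details that the paper calls evident or absorbs into the equivalence of $\norm{\cdot}_\la$ with the gradient norm.
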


Now, we note that the $\phi$-equivariant subspace $H^1\(\bn\)^\phi$ can be realized as the fixed point set of the representation $T$, i.e.,
\begin{align*}
	H^1\(\bn\)^\phi = \{ u \in H^1\(\bn\) : T_g u = u, \; \forall g \in \Ga\}.
\end{align*}

Since $\Ga$ is a compact topological group, there exists a unique Borel probability measure $\mu$ such that for every complex-valued continuous function $f$ on $\Ga$, we have the following:
\begin{align*}
	& \int_{\Ga} f \(kg\) \mathrm{d}\mu(g) = \int_{\Ga} f \(gk\) \mathrm{d}\mu(g) = \int_{\Ga} f \(g\) \mathrm{d}\mu(g), \; \forall k \in \Ga;\\
	\text{and} \; & \int_{\Ga} f(g) \mathrm{d}\mu(g) = \int_{\Ga} f(g^{-1}) \mathrm{d}\mu(g).
\end{align*}
This measure is called the \textbf{Haar measure} on $\Ga$.\\

For the representation $T$ of $\Ga$ over $H^1\(\bn\)$, it can be shown that the map $(g,u) \mapsto T_g u$ from $\Ga \times H^1\(\bn\)$ into $H^1\(\bn\)$ is continuous (See Lemma 2.5.3 in \cite{Vanderbauwhede}). This implies that for every $u \in H^1\(\bn\)$, the integral $\int_{\Ga} T_g u \mathrm{~d}\mu(g)$ is well defined. Now, we introduce a bounded linear map $P_\phi : H^1\(\bn\) \to H^1\(\bn\)$, defined by
\begin{align}
	P_\phi (u) := \int_{\Ga} T_g u \mathrm{~d}\mu (g). \label{Projection map}
\end{align}
This is a projection of $H^1\(\bn\)$ onto $H^1\(\bn\)^\phi$. The existence follows from Theorem 2.5.13 in \cite{Vanderbauwhede}.\\

Now, we present the notion of Bochner integrability and a few of its properties, which will be used in the forthcoming section (details can be found in the book by Yosida \cite{Yosida}). We fix $\(\Ga, \mathcal{B}, \mu\)$ be the measure space on $\Ga$, where $\mathcal{B}$ is Borel $\sigma$-algebra and $\mu$ is the Haar measure.

\medskip

\begin{definition}
	A function $f : \Ga \to H^1\(\bn\)$ is said to be Bochner $\mu$-integrable if there exists a sequence of finitely-valued functions $f_n : \Ga \to H^1\(\bn\)$ such that the followings hold:
	
	\medskip
	
	\begin{enumerate}
		\item $f_n$ converges to $f$ $\mu$-a.e. on $\Ga$;\\
		\item $\lim\limits_{n \to \infty} \int_{\Ga} \norm{f_n(g) - f(g)}_{H^1\(\bn\)} \mathrm{d} \mu(g) = 0$.
	\end{enumerate}
\end{definition}

\medskip

If $f$ satisfies only the first condition, then it is called a strongly $\mathcal{B}$-measurable function. Now, we recall the following results:

\medskip

\begin{enumerate}
	\item[$\mathbf{R_1}$:] A strongly $\mathcal{B}$-measurable function $f : \Ga \to H^1\(\bn\)$ is Bochner $\mu$-integrable iff $\norm{f}_{H^1\(\bn\)}$ is $\mu$-integrable, and in that case we have
	\begin{align}
		\norm{\int_{\Ga} f(g) \mathrm{~d} \mu(g)}_{H^1\(\bn\)} \leq \int_{\Ga} \norm{f(g)}_{H^1\(\bn\)} \mathrm{d} \mu(g). \label{triangle inequality for Bochner integral}
	\end{align}
	\item[$\mathbf{R_2}$:] Let $T \in \(H^1\(\bn\)\)^*$ and let $f : \Ga \to H^1\(\bn\)$ be Bochner $\mu$-integrable, then
	\begin{align}
		T \int_{\Ga} f(g) \mathrm{~d} \mu(g) = \int_{\Ga} T f(g)  \mathrm{~d} \mu(g). \label{Dual elements commute with Bochner integration}
	\end{align}
\end{enumerate}

\medskip

Now, we present a lemma which shows that the integral in \eqref{Projection map} is a Bochner integral.\\

\begin{lemma}\label{T is Bochner integrable}
	Let $T$ be as in \eqref{Representation definition}, then for a fixed $u \in H^1\(\bn\)$, the map $g \mapsto T_g u$ is Bochner $\mu$-integrable.
\end{lemma}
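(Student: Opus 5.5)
Fix $u \in H^1\(\bn\)$ and set $f(g) := T_g u$. By $\mathbf{R_1}$, it suffices to show two things: that $f$ is strongly $\mathcal{B}$-measurable, and that $g \mapsto \norm{f(g)}_{H^1\(\bn\)}$ is $\mu$-integrable.

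The second point is immediate. By \Cref{change of variable} (the change of variables under the isometry $x \mapsto g^{-1}x$, whose Jacobian is trivial since $O(N)$ acts by hyperbolic isometries), together with $\abs{\phi(g)} = 1$, we get $\norm{T_g u}_{H^1\(\bn\)} = \norm{u}_{H^1\(\bn\)}$ for every $g$. So the norm function is constant, hence Borel measurable and integrable against the probability measure $\mu$.

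For strong measurability, I will use that $f$ is continuous from $\Ga$ into $H^1\(\bn\)$. Indeed, by \Cref{Representation} and the joint continuity of $(g,u) \mapsto T_g u$ quoted from \cite{Vanderbauwhede}, $f$ is continuous; directly, $\norm{T_g u - T_h u} = \norm{T_h(T_{h^{-1}g}u - u)} = \norm{T_{h^{-1}g}u - u} \to 0$ as $g \to h$, by strong continuity at $I$ and the isometry property. Since $\Ga$ is a compact metric space (a closed subgroup of $O(N)$), the image $f(\Ga)$ is compact, hence separable. I then build finitely-valued approximants explicitly. For each $n$, cover $\Ga$ by finitely many open balls of radius $\delta_n$, where $\delta_n$ comes from the uniform continuity of $f$ on the compact set $\Ga$, chosen so that $\norm{f(g) - f(h)} < 1/n$ whenever the distance between $g$ and $h$ is less than $\delta_n$. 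Disjointify the balls into Borel sets $E_1, \dots, E_{k_n}$, pick $g_i \in E_i$, and set $f_n := \sum_i f(g_i)\chi_{E_i}$. Then $\sup_g \norm{f_n(g) - f(g)} \le 1/n$. This gives pointwise convergence everywhere, and also $\int_\Ga \norm{f_n - f}\, \mathrm{d}\mu \le 1/n \to 0$. So both conditions of the definition hold directly, and $\mathbf{R_1}$ is not even strictly needed.

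The only genuine input is the continuity of $g \mapsto T_g u$. That continuity rests on \Cref{Representation} (strong continuity, proven in the appendix), so I do not expect a real obstacle here. The one point requiring care is checking that the $E_i$ are Borel sets: they are built from open balls by finitely many set operations, so this is clear. Uniform continuity holds because $\Ga$ is compact and metrizable with the operator-norm metric inherited from $O(N)$.
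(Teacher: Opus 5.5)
Your proof is correct and is essentially the paper's: you use continuity of $g \mapsto T_g u$, a finite cover of the compact group $\Ga$, and disjointification into Borel pieces to obtain a finitely-valued function within $1/n$ of $T_g u$ uniformly in $g$, and the isometry property for integrability of the norm. One harmless slip: because you pick an arbitrary $g_i \in E_i$ in a ball of radius $\delta_n$, you only get $\sup_{g} \norm{f_n(g) - f(g)} \leq 2/n$; using radius $\delta_n/2$, or taking the ball centres as representatives as the paper does with $g_n^j$, restores the bound $1/n$, and the conclusion is unaffected either way.
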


\begin{proof}
	Since $T$ is a representation of $\Ga$ over $H^1\(\bn\)$, the map $(g,u) \mapsto T_g u$ from $\Ga \times H^1\(\bn\)$ into $H^1\(\bn\)$ is continuous. Let us now fix $n \in \N$, then for each $g \in \Ga$, from the continuity we have an open neighborhood $U_{n,g}$ such that 
	\begin{align*}
		\norm{T_gu - T_ku}_\la < \f{1}{n}, \quad \forall k \in U_{n,g}.
	\end{align*}
	Now $\{U_{n,g} : g \in \Ga\}$ is an open cover of $\Ga$, from which we can choose finitely many such that $\Ga = U_{n, g_n^1} \cup \cdots \cup U_{n, g_n^i}$. Let us define
	\begin{align*}
		B_{n,1} = U_{n. g_n^1} \; \text{and} \; B_{n,j} = U_{n,g_n^j} \setminus \cup_{l=1}^{j-1} U_{n,g_n^l}, \; \text{for} \; 2 \leq j \leq i.
	\end{align*}
	Note that all $B_{n,j}$ are measurable.  Now let us define a finitely valued function on $\Ga$ to be $S_n u = \sum_{j=1}^{i} T_{g_n^j}  u \chi_{B_{n,j}}.$ Then we observe that
	\begin{align*}
		\sup_{g \in \Ga} \norm{T_g u - S_nu(g)}_\la \leq \f{1}{n}.
	\end{align*}
	This implies the map $g \mapsto T_g u$ is strongly $\mu$-measurable. Since for each $g \in \Ga$, $T_g$ is an isometry on $H^1\(\bn\)$, we have
	\begin{align*}
		\int_{\Ga} \norm{T_gu}_\la \mathrm{~d}\mu (g) = \int_{\Ga} \norm{u}_\la \mathrm{~d}\mu (g) = \norm{u}_\la < \infty.
	\end{align*}
	Hence, the map $g \mapsto T_g u$ is Bochner $\mu$-integrable.
\end{proof}

\section{Limit problem and energy estimates}\label{Limit problem}

Throughout this section, we take $N \geq 4$ and $\la \in \(\f{N(N-2)}{4}, \f{(N-1)^2}{4}\)$. Before going into the proof of the existence theorem, we discuss some key results related to the limiting critical problem and establish an energy estimate that will be used in the subsequent analysis for the compactness. The loss of compactness in our problem is governed by the critical equation in the Euclidean space
\begin{align}
	- \Delta w = |w|^{2^*-2} w, \quad w \in D^{1,2}(\rn). \label{equation in R^N111} \tag{$Eq^\infty$}
\end{align}
In \cite{Clapp_Rios}, Clapp and Rios studied this limit problem as a particular case of the critical quasi-linear equation:
\begin{align*}
	- \De_q u = |u|^{q^*-2}u, \;\; u \in D^{1,q}(\rn),
\end{align*} 
where $1<q<N$ and $q^* = \f{Nq}{N-q}$. We will only use their result for the semi-linear case, i.e., $q=2$. Their main theorem is the following:\\

\begin{theorem}[Theorem 3.1, \cite{Clapp_Rios}]
	Let $\Ga$ be a closed subgroup of $O(N)$ and $\phi : \Ga \to \mathbb{Z}_2$ be a continuous onto homomorphism which satisfies \eqref{(A_1)} and \eqref{(A_2)}. Then \eqref{equation in R^N} possesses a nontrivial solution in $D^{1,2}\(\rn\)^\phi$.
\end{theorem}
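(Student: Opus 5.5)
We follow the variational scheme of Clapp and Rios in the Euclidean setting (so, in their paper, this is the proof of their Theorem~3.1, specialised to $q=2$). The solution will be obtained as a minimizer of the energy
\begin{align*}
	J(w) = \f{1}{2}\int_{\rn} \abs{\nabla w}^2 \dx - \f{1}{2^*}\int_{\rn} \abs{w}^{2^*} \dx
\end{align*}
on the $\phi$-equivariant Nehari manifold $\mathcal{N}^\phi := \{ w \in D^{1,2}(\rn)^\phi \setminus\{0\} : \norm{\nabla w}_2^2 = \norm{w}_{2^*}^{2^*}\}$. We set $c^\phi := \inf_{\mathcal{N}^\phi} J$.

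First we check that $\mathcal{N}^\phi \neq \emptyset$ and that $c^\phi \geq \f{1}{N} S^{N/2} > 0$. Nonemptiness follows from \eqref{(A_1)}, which makes $D^{1,2}(\rn)^\phi$ infinite dimensional. The lower bound comes from the Sobolev inequality \eqref{Sobolev}. By the principle of symmetric criticality (Palais), any critical point of $J$ restricted to $D^{1,2}(\rn)^\phi$ is a critical point of $J$ on $D^{1,2}(\rn)$. This uses the isometric representation $w \mapsto \phi(g) w(g^{-1}\cdot)$, under which $J$ is invariant. Ekeland's principle then gives a minimizing Palais--Smale sequence $(w_n) \subset \mathcal{N}^\phi$ at level $c^\phi$, which is bounded.

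The core step is compactness modulo the symmetries of the problem. We apply Struwe's global compactness, or equivalently the Lions concentration-compactness principle, to $(w_n)$. If $w_n \not\to 0$ strongly in $L^{2^*}$, there exist $\eps_n > 0$ and $x_n \in \rn$ such that the rescaled functions $\tilde w_n := \eps_n^{(N-2)/2} w_n(\eps_n \cdot + x_n)$ converge weakly to some $w \neq 0$. We then apply the dichotomy of \Cref{Dichotomy result}, in its Euclidean version, to the points $x_n/\eps_n$, which is where \eqref{(A_2)} enters.

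\textbf{Case 1:} for every $k$, there are $g_1,\dots,g_k$ with $\abs{g_i x_n - g_j x_n}/\eps_n \to \infty$. Each translate $g_i x_n$ then carries a copy of the same profile $w$. Since the copies separate, their energies add, giving $c^\phi \geq k\, J(w) \geq k \f{1}{N}S^{N/2}$ for all $k$, which is a contradiction.

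\textbf{Case 2:} we can replace $x_n$ by points $b_n \in \mathrm{Fix}(\Ga)$ within distance $R\eps_n$. Translating and dilating about fixed points preserves $\phi$-equivariance (the Euclidean analogue of \Cref{Equivariant subspaces are closed under translations through the fixed points}). Hence $\tilde w_n \in D^{1,2}(\rn)^\phi$, the weak limit satisfies $w \in D^{1,2}(\rn)^\phi \setminus \{0\}$, and $w$ is a critical point of $J$. Brezis--Lieb splitting together with weak lower semicontinuity gives $J(w) \leq c^\phi$, so $w$ attains $c^\phi$.

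The remaining alternative, $w_n \to 0$ in $L^{2^*}$, contradicts $c^\phi > 0$ on the Nehari manifold. Finally $w$ is nontrivial, and because $\phi$ is onto it is automatically sign-changing and nonradial.

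We expect the main obstacle to be the vanishing-versus-concentration analysis under dilations. One must produce the concentration points $x_n$ and scales $\eps_n$ so that the rescaled limit is nonzero; this uses a Lions-type lemma with the critical exponent, covering $\rn$ by cubes of size $\eps_n$. One must also make the energy-splitting count in Case 1 rigorous, which requires the profile to be nonzero simultaneously at all $k$ orbit points. Case 1 needs \eqref{(A_2)} to produce infinitely many mutually separated orbit points, while Case 2 needs \eqref{(A_1)}-type structure to keep the limit inside the equivariant class.
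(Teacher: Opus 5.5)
Your outline is correct and follows the standard argument behind Clapp--Rios' Theorem~3.1, which the paper only quotes without proof. It is also exactly the mechanism the paper itself runs on $\bn$ in Case~II: alternative (1) of \Cref{Dichotomy result} is excluded by mass counting on disjoint balls around separated orbit points, and alternative (2) lets one recenter at fixed points, which preserves $\phi$-equivariance.

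The one step to tighten is Case~1. You write $c^\phi \geq k J(w) \geq \f{k}{N} S^{N/2}$, but the bound $J(w) \geq \f{1}{N}S^{N/2}$ needs $w$ to be a critical point. That in turn needs the Palais--Smale property on all of $D^{1,2}(\rn)$, the Euclidean analogue of \Cref{Principle of symetric criticality type result}, because the rescalings about $x_n \notin \mathrm{Fix}(\Ga)$ are not equivariant. You can avoid this entirely: count gradient mass on the $k$ disjoint balls $\Bee(g_i x_n, R\eps_n)$ and use weak lower semicontinuity to get $N c^\phi \geq k \int_{\Bee(0,R)} \abs{\nabla w}^2 \dx$ for every $k$ and $R$. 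Since $w \neq 0$, this is already a contradiction.
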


\medskip

The corresponding energy functional restricted to $D^{1,2}\(\rn\)^\phi$ is
\begin{align*}
	J_{\infty}(w) = \f{1}{2} \int_{\rn} \abs{\nabla w}^2 \dx - \f{1}{2^*} \int_{\rn} |w|^{2^*} \dx, \quad w \in D^{1,2}\(\rn\)^\phi.
\end{align*}
Then the Nehari manifold is defined as
\begin{align*}
	\mathcal{N}_\infty^{\phi}(\rn) := \{w \in D^{1,2}\(\rn\)^\phi \setminus \{0\} \; : \; \int_{\rn} \abs{\nabla w}^2 \dx = \int_{\rn} |w|^{2^*} \dx \}.
\end{align*}
They showed that there exists a minimizer $W$ for $c_{\infty}^\phi := \inf_{w \in \mathcal{N}_\infty^{\phi}(\rn)} J_\infty(w).$ From the principle of symmetric criticality, $W \in D^{1,2}\(\rn\)^\phi$ solves \eqref{equation in R^N}. For a fixed pair $\(\Ga, \phi\)$, we denote by $S_{\infty}^\phi$ the equivariant Sobolev level associated with \eqref{equation in R^N} such that
\begin{align*}
	S_{\infty}^\phi := \inf_{w \in D^{1,2}\(\rn\)^\phi \setminus \{0\}} \f{\int_{\rn} \abs{\nabla w}^2 \dx}{ \(\int_{\rn} |w|^{2^*} \dx\)^{\f{2}{2^*}}}.
\end{align*}
We observe that $S_{\infty}^\phi = \(N c_{\infty}^\phi \)^{\f{2}{N}}$. Therefore, $W$ is also a minimizer for $S_{\infty}^\phi$. Now, for the same fixed pair $\(\Ga, \phi\)$ and $\la \in \(\f{N(N-2)}{4}, \f{(N-1)^2}{4}\)$ we define the equivariant Poincar\'e-Sobolev level to be
\begin{align*}
	S^\phi := \inf_{u \in H^1\(\bn\)^\phi \setminus \{0\}} \f{\norm{u}_\la^2}{\(\int_{\bn} |u|^{2^*} \dv \)^{\f{2}{2^*}}}.
\end{align*}
From the conformal change of metric, we have
\begin{align*}
	S^\phi = \inf_{v \in H_0^1(\Bee)^\phi \setminus \{0\}} \f{Q(v)}{\(\int_{\Bee} |v|^{2^*} \dx \)^{\f{2}{2^*}}} = \inf_{v \in H_0^1(\Bee)^\phi \setminus \{0\}} \f{\int_{\Bee} \abs{\nabla v}^2 \dx - \tilde{\la} \int_{\Bee} h^2 v^2 \dx}{\(\int_{\Bee} |v|^{2^*} \dx \)^{\f{2}{2^*}}},
\end{align*}
where $\tilde{\la} = \la - \f{N(N-2)}{4}$. To show the existence of a nonradial sign-changing solution we will show that $S^\phi$ can be achieved by a function either from $H_0^1(\Bee)^\phi$ or $H^1\(\bn\)^\phi$. The main difficulty is that the critical Sobolev embedding is not compact, so a minimizing sequence for $S^\phi$ may develop a concentration. The purpose of this section is to prove the strict comparison $S^\phi < S_\infty^\phi$. This strict inequality will be essential to exclude concentration phenomena and recover compactness of the minimizing sequences. To show this inequality, we exploit the equivariance of $W$ to obtain an improved decay estimate at infinity and then use suitably truncated rescalings of $W$ as test functions for $S^\phi$.

As $W$ solves \eqref{equation in R^N}, from the elliptic regularity we have that $W \in C^{1, \al}\(\rn\)$ for some $0<\al < 1$. The standard decay of a finite energy solution gives (see \cite{Vetois})
\begin{align*}
	|W(x)| \leq C_0 \(1+|x|^{N-2}\)^{-1}, \;\; \text{and} \; \abs{\nabla W(x)} \leq C_0 \(1+|x|^{N-1}\)^{-1},
\end{align*}
for every $x \in \rn$, i.e.,
\begin{align*}
	W(x) = O\(|x|^{2-N}\) \;\; \text{and} \;\; \nabla W(x) = O \(|x|^{1-N}\), \;\; \text{as} \; |x| \to \infty.
\end{align*} 
However, the equivariance of $W$ and the fact that \eqref{equation in R^N} is invariant under Kelvin transformation allow us to improve these estimates by one order.\\

\begin{lemma}\label{improved decay estimate}
	Let $W \in D^{1,2}\(\rn\)^\phi$ be a solution to \eqref{equation in R^N}, then
	\begin{align*}
		W(x) = O\(|x|^{1-N}\) \;\; \text{and} \;\; \nabla W(x) = O \(|x|^{-N}\), \;\; \text{as} \; |x| \to \infty.
	\end{align*}
\end{lemma}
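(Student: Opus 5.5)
We will prove the statement with the Kelvin transform. Set
\begin{align*}
	\widehat{W}(y) := |y|^{2-N} W\(\f{y}{|y|^2}\), \quad y \neq 0.
\end{align*}
This function again solves \eqref{equation in R^N} on $\rn\setminus\{0\}$ and belongs to $D^{1,2}\(\rn\)$. Because $W$ has finite energy, the singularity at the origin is removable, so $\widehat W$ solves \eqref{equation in R^N} on all of $\rn$. Elliptic regularity then gives $\widehat W \in C^{1,\al}\(\rn\)$; in particular $\widehat W$ and $\nabla\widehat W$ are bounded near $0$. The Kelvin transform commutes with orthogonal maps, since $g(y/|y|^2) = gy/|gy|^2$ and $|gy|=|y|$. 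Hence $\widehat W(gy) = \phi(g)\widehat W(y)$ for all $g\in\Ga$, so $\widehat W \in D^{1,2}\(\rn\)^\phi$.

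The key step is to show $\widehat W(0)=0$. We use the surjectivity of $\phi$: choose $g_0\in\Ga$ with $\phi(g_0)=-1$. Since $g_0 0 = 0$ and $\widehat W$ is continuous at $0$, we get
\begin{align*}
	\widehat W(0) = \widehat W(g_0 0) = \phi(g_0)\widehat W(0) = -\widehat W(0),
\end{align*}
so $\widehat W(0)=0$. Because $\widehat W\in C^{1}$ near $0$, the mean value theorem then gives $|\widehat W(y)|\leq C|y|$ for $|y|\leq 1$.

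To return to $W$, note that the Kelvin transform is an involution, so $W(x) = |x|^{2-N}\widehat W\(x/|x|^2\)$. For $|x|\ge 1$ this yields
\begin{align*}
	|W(x)|\le C|x|^{2-N}|x|^{-1} = C|x|^{1-N}.
\end{align*}
For the gradient, we differentiate
\begin{align*}
	\nabla W(x) = (2-N)|x|^{-N}x\,\widehat W\(\f{x}{|x|^2}\) + |x|^{2-N} D\iota(x)^{T}\(\nabla \widehat W\)\(\f{x}{|x|^2}\),
\end{align*}
where $\iota(x)=x/|x|^2$ satisfies $|D\iota(x)|\le C|x|^{-2}$. The first term is bounded by $C|x|^{1-N}\cdot|x|^{-1} = C|x|^{-N}$, using the bound on $\widehat W$ just established. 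The second term is bounded by $C|x|^{2-N}|x|^{-2}\sup_{\Bee(0,1)}|\nabla\widehat W| = C|x|^{-N}$. This gives $\nabla W(x) = O\(|x|^{-N}\)$.

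The main obstacle is justifying that $\widehat W$ is $C^{1}$ up to the origin. We would first check that the Kelvin transform preserves the $D^{1,2}$ norm and the $L^{2^*}$ norm. Then we would argue that a finite-energy weak solution on $\rn\setminus\{0\}$ extends to a weak solution on $\rn$, since a point has zero capacity when $N\ge 3$ and a cutoff argument removes it. Finally, we would apply the standard regularity for critical equations (Brezis–Kato/Moser iteration giving $L^\infty_{loc}$, then Schauder/$C^{1,\al}$ estimates), which is the same regularity already quoted for $W$ itself.
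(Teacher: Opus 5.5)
Your proposal is correct and follows essentially the same route as the paper. Both arguments use the Kelvin transform and a cutoff argument showing the origin is a removable singularity, so that elliptic regularity gives $\hat{W}\in C^1$ near $0$. Equivariance together with the surjectivity of $\phi$ then forces $\hat{W}(0)=0$, and the mean value theorem, transferred back through the inversion, yields the bounds for both $W$ and $\nabla W$.
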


\begin{proof}
	The Kelvin transformation for $W$ is
	\begin{align*}
		\hat{W}(y) = |y|^{2-N} W \(\f{y}{|y|^2}\), \quad y \neq 0.
	\end{align*}
	Since $W \in D^{1,2}(\rn)$, the conformal invariance guarantees that $\hat{W} \in D^{1,2}(\rn)$ (see \cite{Egnell}) and it solves 
	\begin{align}
		- \De \hat{W} = \abs{\hat{W}}^{2^*-2} \hat{W}, \quad \text{in} \; \rn \setminus \{0\}. \label{Kelvin transformed equation}
	\end{align}
	\textbf{Claim I:} $\hat{W} \in C^1_{loc} \(\Bee(0,R)\)$, for any $R>0$.\\
	\newline
	To prove this, first we prove that $\hat{W}$ is a weak solution to the above equation in $\Bee(0,R)$, then elliptic regularity will complete the claim. Let $\varphi \in C_c^\infty\(\Bee(0,R)\)$ and a smooth cutoff function $\eta_\eps$ such that
	\begin{align*}
		\eta_\eps = \begin{cases}
			0, \quad & \text{in} \; \Bee(0,\eps)\\
			1,  & \text{outside} \; \Bee(0,2\eps)
		\end{cases} \quad \text{and} \; \abs{\nabla \eta_\eps} \leq \f{C}{\eps}, \; \text{for} \; C >0.
	\end{align*}
	Now $\eta_\eps \varphi$ is an admissible test function for the equation \eqref{Kelvin transformed equation}, i.e.,
	\begin{align*}
		\int_{\Bee(0,R)} \eta_\eps \(\nabla \hat{W} \cdot \nabla \varphi\) \dx + \int_{\Bee(0,2\eps) \setminus \Bee(0,\eps)} \varphi \( \nabla \hat{W} \cdot \nabla \eta_\eps\) \dx =  \int_{\Bee(0,R)} \abs{\hat{W}}^{2^*-2} \hat{W} \eta_\eps \varphi \dx.
	\end{align*}
	Now
	\begin{align*}
		\abs{\int_{\Bee(0,2\eps) \setminus \Bee(0,\eps)} \varphi \( \nabla \hat{W} \cdot \nabla \eta_\eps\) \dx} & \leq \norm{\varphi}_{L^\infty} \norm{\nabla \hat{W}}_{L^2\(\Bee(0,2\eps) \setminus \Bee(0,\eps)\)} \(\int_{\Bee(0,2\eps) \setminus \Bee(0,\eps)} \abs{\nabla \eta_\eps}^2 \dx\)^{\f{1}{2}} \leq C \(\eps^{N-2}\)^{\f{1}{2}}
	\end{align*}
	Therefore, taking $\eps \to 0$, from the dominated convergence theorem, we have that
	\begin{align*}
		\int_{\Bee(0,R)} \nabla \hat{W} \cdot \nabla \varphi \dx = \int_{\Bee(0,R)} \abs{\hat{W}}^{2^*-2} \hat{W} \varphi \dx.
	\end{align*}
	Hence, \textbf{Claim I} follows.\\
	\newline
	\textbf{Claim II:} $\hat{W}(0) = 0$.\\
	\newline
	For any $y \neq 0$ and $g \in \Ga$, we have
	\begin{align*}
		\hat{W}(gy) = |y|^{2-N} W \(g \(\f{y}{|y|^2}\)\) = \phi(g) |y|^{2-N} W \(\f{y}{|y|^2}\) = \phi(g) \hat{W}(y).
	\end{align*}
	Now taking $y \to 0$, we have $\hat{W}(0) = \phi(g) \hat{W}(0)$. Since $\phi$ is onto, we take $g \in \Ga$ such that $\phi(g) = -1$, that implies $\hat{W}(0) = 0$. This completes the proof of \textbf{Claim II}.\\
	
	Now, from the mean value theorem, we have
	\begin{align*}
		& \abs{\hat{W}(y)} \leq C|y|, \quad \text{as} \; |y| \to 0.\\
		\Rightarrow & \abs{W(x)} \leq C |x|^{2-N} \cdot \f{1}{|x|} = O \(|x|^{1-N}\), \quad \text{as} \; |x| \to \infty.
	\end{align*}
	Furthermore, as $x\to \infty$ we obtain
	\begin{align*}
		\abs{\nabla W(x)} & \leq (N-2) |x|^{1-N} \abs{\hat{W}\(\f{x}{|x|^2}\)} + |x|^{2-N} \abs{\nabla \hat{W} \(\f{x}{|x|^2}\)} \cdot \f{1}{|x|^2} \leq C |x|^{-N}.
	\end{align*}
\end{proof}

As an immediate consequence of the previous lemma, we have $W \in L^2\(\rn\)$, for every $N \geq 4$. Now, we use $W$ to construct a family of test functions inside $H_0^1\(\Bee\)^\phi$ for the energy level $S^\phi$. Let $\eta \in C_c^\infty\(\Bee\)$ be a radial cutoff function such that
\begin{align*}
	0 \leq \eta \leq 1, \quad \eta = \begin{cases}
		1 \quad & \text{in} \; B(0,\rho)\\
		0 & \text{outside} \; B(0,2\rho)
	\end{cases}, 
\end{align*}
for some $ 0 < \rho < \f{1}{3}.$ Now for $\eps > 0$, we define
\begin{align*}
	W_\eps(x) = \eps^{\f{2-N}{2}} \eta(x) W\(\f{x}{\eps}\), \quad x \in \Bee.
\end{align*}
Since $\eta$ is radial and $\Ga \subset O(N)$, it is $\Ga$-invariant. Therefore, $\phi$-equivariancy of $W$ is preserved under this localization, i.e.,
\begin{align*}
	W_\eps(gx) = \phi(g) W_\eps(x), \quad \forall x \in \Bee, g \in \Ga.
\end{align*}
Moreover, $\mathrm{Supp} W_\eps \Subset \Bee$ and \Cref{improved decay estimate} yields the required integrability to conclude that $W_\eps \in H_0^1\(\Bee\)^\phi$. In the following lemma, we estimate all three terms in the quotient defining $S^\phi$.\\

\begin{lemma}
	As $\eps \to 0$, 
	\begin{align}
		(i) \; & \int_{\Bee} \abs{\nabla W_\eps}^2 \dx \leq \int_{\rn} \abs{\nabla W}^2 \dx + O(\eps^N). \label{estimate for gradient term}\\
		(ii) \; & \int_{\Bee} h^2 W_\eps^2 \dx = 4 \eps^2 \int_{\rn} W^2 \dx + o(\eps^2). \label{estimate for square term}\\
		(iii) \; & \int_{\Bee} \abs{W_\eps}^{2^*} \dx = \int_{\rn} \abs{W}^{2^*} \dx + o(\eps^2). \label{estimate for critical term}
	\end{align}
\end{lemma}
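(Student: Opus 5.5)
The plan is the Brézis–Nirenberg computation, with the improved decay of \Cref{improved decay estimate} in place of the explicit bubble. Throughout I change variables $x = \eps y$, so that $W_\eps(\eps y) = \eps^{\f{2-N}{2}} \eta(\eps y) W(y)$. All three integrals then become integrals over $\Bee(0,2\rho/\eps)$ of $W$, or $\nabla W$, weighted by $\eta(\eps y)$, plus a single error term supported where $\rho/\eps \le |y| \le 2\rho/\eps$. The decay $|W(y)| \le C|y|^{1-N}$ and $|\nabla W(y)| \le C|y|^{-N}$ controls every tail.

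For (i), I expand
\begin{align*}
\nabla W_\eps(x) = \eps^{-\f{N}{2}}\eta(x)(\nabla W)(x/\eps) + \eps^{\f{2-N}{2}} W(x/\eps)\nabla\eta(x).
\end{align*}
The first square, after the change of variables, is at most $\int_{\rn}|\nabla W|^2\dx$ because $\eta \le 1$. The cross term and the $|\nabla\eta|^2$ term live on the annulus $\rho \le |x| \le 2\rho$, where $|x/\eps| \sim \rho/\eps$. There $|W(x/\eps)| \le C\eps^{N-1}$ and $|\nabla W(x/\eps)| \le C\eps^{N}$, so
\begin{align*}
\eps^{2-N}\int |W(x/\eps)|^2|\nabla\eta|^2 \dx \le C\eps^{2-N}\eps^{2N-2} = C\eps^{N},
\end{align*}
and the cross term is $\eps^{1-N}\cdot \eps^{N-1}\cdot\eps^{N} = O(\eps^{N})$. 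This gives the bound $O(\eps^N)$.

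For (ii), the substitution gives
\begin{align*}
\int_{\Bee} h^2 W_\eps^2\dx = \eps^2\int_{\rn} h(\eps y)^2\eta(\eps y)^2 W(y)^2\dy.
\end{align*}
On the support of $\eta$ the weight $h^2 = 4(1-|x|^2)^{-2}$ is bounded, since $|x| \le 2\rho < 2/3$. Moreover $h(\eps y)^2\eta(\eps y)^2 \to 4$ pointwise as $\eps \to 0$. The integrand is dominated by $C W^2$, which is integrable because $W \in L^2(\rn)$ for $N \ge 4$; this is exactly where the improved decay is used, since $|y|^{2-2N}$ is integrable at infinity. Dominated convergence then yields $4\eps^2\int W^2 + o(\eps^2)$.

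For (iii), scaling invariance gives
\begin{align*}
\int_{\Bee}|W_\eps|^{2^*}\dx = \int_{\rn}\eta(\eps y)^{2^*}|W|^{2^*}\dy.
\end{align*}
The difference from $\int_{\rn}|W|^{2^*}$ is therefore bounded by $\int_{|y|\ge \rho/\eps}|W|^{2^*}$. Using $|W| \le C|y|^{1-N}$ this tail is at most
\begin{align*}
C\int_{\rho/\eps}^\infty r^{(1-N)2^* + N-1}\,\mathrm{d}r = O\big(\eps^{2^*(N-1)-N}\big).
\end{align*}
The exponent is $\f{2N(N-1)}{N-2} - N = \f{N^2}{N-2} > 2$, so the tail is $o(\eps^2)$.

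I expect no serious obstacle; the step needing the most care is (ii). There one has to justify dominated convergence and the value of the limit constant, which uses $h(0)^2 = 4$ together with $W \in L^2$. That integrability is precisely why \Cref{improved decay estimate} is needed for $N = 4$: with only the standard decay, $W$ would fail to be in $L^2$ when $N = 4$.
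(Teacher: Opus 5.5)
Your proposal is correct and follows essentially the same route as the paper. Both rescale $x=\eps y$ and use the improved decay on the annulus $\rho/\eps\le|y|\le 2\rho/\eps$ to get $O(\eps^N)$ for the $|\nabla\eta|^2$ and cross terms. Both use dominated convergence with $W\in L^2(\rn)$ for (ii), and the tail bound $O(\eps^{N^2/(N-2)})=o(\eps^2)$ for (iii). The only cosmetic difference is in (i): you bound the annulus terms by pointwise sup bounds times the annulus volume in the $x$-variable, whereas the paper integrates the radial decay in the $y$-variable.
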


\begin{proof}
	$(i)$ 
	\begin{align*}
		\int_{\Bee} \abs{\nabla W_\eps}^2 \dx = & \eps^{-N} \int_{\Bee} \eta(x)^2 \abs{\nabla W \(\f{x}{\eps}\)}^2 \dx + \\
		& \eps^{2-N} \int_{\Bee} \abs{\nabla \eta (x)}^2 W \(\f{x}{\eps}\)^2 \dx + 2 \eps^{1-N} \int_{\Bee} \eta(x) W \(\f{x}{\eps}\) \(\nabla \eta(x) \cdot \nabla W \(\f{x}{\eps}\)\) \dx\\
		= & \int_{\rn} \eta(\eps y)^2 \abs{\nabla W(y)}^2 \dy + \eps^2 \int_{\rn} \abs{\nabla \eta (\eps y)}^2 W(y)^2 \dy + \\
		& \hspace*{2cm}  2\eps \int_{\rn} \eta(\eps y) W(y) \( \nabla \eta (\eps y) \cdot \nabla W(y)\) \dy\\
		=: & \; I_1 + I_2 + I_3.
	\end{align*}
	Clearly, we have $I_1 \leq \int_{\rn} \abs{\nabla W}^2 \dx.$ Now, using the \Cref{improved decay estimate} we have
	\begin{align*}
		I_2 \leq \eps^2 \int_{\f{\rho}{\eps} < |y| < \f{2\rho}{\eps}} W(y)^2 \dy \leq C \eps^2 \int_{\f{\rho}{\eps}}^{\f{2\rho}{\eps}} r^{2(1-N)} \cdot r^{N-1} \mathrm{~d}r = C \eps^2 \int_{\f{\rho}{\eps}}^{\f{2\rho}{\eps}} r^{1-N} \mathrm{~d}r \leq C \eps^N = O(\eps^N).
	\end{align*}
	Again, using the \Cref{improved decay estimate} we obtain
	\begin{align*}
		I_3 & \leq 2 \eps \int_{\{\f{\rho}{\eps} < |y| < \f{2\rho}{\eps}\}} \eta(\eps y) \abs{W(y)} \abs{\nabla \eta \(\eps y\)} \abs{\nabla W(y)} \dy\\
		& \leq C \eps \int_{\f{\rho}{\eps}}^{\f{2\rho}{\eps}} r^{1-N} \cdot r^{-N} \cdot r^{N-1} \mathrm{~d}r = C \eps \int_{\f{\rho}{\eps}}^{\f{2\rho}{\eps}} r^{-N} \mathrm{~d}r \leq C \eps^N = O(\eps^N).
	\end{align*}
	\vspace*{3mm}
	
	$(ii)$ First we observe that
	\begin{align*}
		\int_{\Bee} h(x)^2 W_\eps(x)^2 \dx = \eps^{2-N} \int_{\Bee} h(x)^2 \eta(x)^2 W \(\f{x}{\eps}\)^2 \dx = \eps^2 \int_{\rn} h(\eps y)^2 \eta(\eps y)^2 W(y)^2 \dy.
	\end{align*}
	Now, for each fixed $y \in \rn$, $\eps y \to 0$ as $\eps \to 0$. Then $h(\eps y) \to 2$ and $\eta(\eps y) \to 1$ pointwise in $\rn$ as $\eps \to 0$. This implies that as $\eps \to 0$, we get $h(\eps y)^2 \eta(\eps y)^2 W(y)^2 \to 4 W(y)^2 \; \text{pointwise.}$ As $\mathrm{Supp} \eta \subset \Bee(0,2\rho)$, where $\rho < \f{1}{3}$, there exists $M >0$ such that $h(x) \eta(x) \leq M$ in $B$, i.e., we will have $h(\eps y) \eta (\eps y) \leq M$ in $\rn$. Therefore $h(\eps y)^2 \eta(\eps y)^2 W(y)^2 \leq M^2 W(y)^2$ and from the \Cref{improved decay estimate}, we have $W \in L^2\(\rn\)$. Hence, from the dominated convergence theorem, as $\eps \to 0$, we obtain
	\begin{align*}
		& \int_{\rn} h(\eps y)^2 \eta(\eps y)^2 W(y)^2 \dy = 4 \int_{\rn} W(y)^2 \dy + o(1)\\
		\Rightarrow & \; \int_{\Bee} h(x)^2 W_\eps(x)^2 \dx = \eps^2 \int_{\rn} h(\eps y)^2 \eta(\eps y)^2 W(y)^2 \dy = 4 \eps^2 \int_{\rn} W(y)^2 \dy + o(\eps^2).
	\end{align*}
	
	\vspace*{3mm}
	
	$(iii)$
	\begin{align*}
		\int_{\Bee} \abs{W_\eps}^{2^*} \dx = \eps^{-N} \int_{\Bee} \eta(x)^{2^*} \abs{W\(\f{x}{\eps}\)}^{2^*} \dx = \int_{\rn} \eta(\eps y)^{2^*} \abs{W(y)}^{2^*} \dy.
	\end{align*}
	Since $W \in L^{2^*}\(\rn\)$, using the dominated convergence theorem, as $\eps \to 0$ we have
	\begin{align*}
		\int_{\Bee} \abs{W_\eps}^{2^*} \dx = \int_{\rn} \abs{W}^{2^*} \dx + o(1).
	\end{align*}
	Now, we estimate the difference between two quantities:
	\begin{align*}
		\abs{\int_{\Bee} \abs{W_\eps}^{2^*} \dx - \int_{\rn} \abs{W}^{2^*} \dx} & \leq \int_{\{|y| > \f{\rho}{\eps}\}} \abs{W(y)}^{2^*} \dy\\
		& \leq C \int_{\f{\rho}{\eps}}^{\infty} r^{(1-N)(2^*-1)} \mathrm{~d}r = O \(\eps^{(N-1)(2^*-1) - 1}\) = o(\eps^2).
	\end{align*}
	Therefore
	\begin{align*}
		\int_{\Bee} \abs{W_\eps}^{2^*} \dx = \int_{\rn} \abs{W}^{2^*} \dx + o(\eps^2).
	\end{align*}
\end{proof}

We now use the above estimates to compare the energy level $S^\phi$ for conformal equation \eqref{conformal equation} and the limiting level $S_\infty^\phi$ for \eqref{equation in R^N}.\\

\begin{lemma}\label{local compactness level}
	$S^\phi < S^\phi_{\infty}$
\end{lemma}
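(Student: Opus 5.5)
We will prove the strict inequality by testing the conformal quotient for $S^\phi$ with the truncated rescalings $W_\eps\in H_0^1(\Bee)^\phi$ built above. Here $W$ is the minimizer for $S_\infty^\phi$, so $\int_{\rn}|\nabla W|^2\dx=\int_{\rn}|W|^{2^*}\dx$ and $S_\infty^\phi=\big(\int_{\rn}|\nabla W|^2\dx\big)\big/\big(\int_{\rn}|W|^{2^*}\dx\big)^{2/2^*}$. Since $S^\phi$ is an infimum over $H_0^1(\Bee)^\phi\setminus\{0\}$ (by the conformal change of metric), it suffices to show
\begin{align*}
\frac{Q(W_\eps)}{\(\int_{\Bee}|W_\eps|^{2^*}\dx\)^{2/2^*}} < S_\infty^\phi \quad \text{for some small } \eps>0 .
\end{align*}

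The first step is to expand the numerator using estimates \eqref{estimate for gradient term} and \eqref{estimate for square term}:
\begin{align*}
Q(W_\eps) \leq \int_{\rn}|\nabla W|^2\dx - 4\tilde{\la}\,\eps^2\int_{\rn}W^2\dx + O(\eps^N)+o(\eps^2).
\end{align*}
Since $N\geq 4$, we have $O(\eps^N)=o(\eps^2)$. The key point is that $W\in L^2(\rn)$, which follows from \Cref{improved decay estimate}, so the $L^2$ term is finite and genuinely of order $\eps^2$.

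The second step is to treat the denominator using \eqref{estimate for critical term}. Writing $A:=\int_{\rn}|W|^{2^*}\dx>0$, we get $\big(\int_{\Bee}|W_\eps|^{2^*}\dx\big)^{2/2^*}=A^{2/2^*}\big(1+o(\eps^2)\big)$, since $t\mapsto t^{2/2^*}$ is smooth near $A$. Dividing the two expansions then gives
\begin{align*}
\frac{Q(W_\eps)}{\(\int_{\Bee}|W_\eps|^{2^*}\dx\)^{2/2^*}} \leq S_\infty^\phi - \frac{4\tilde{\la}\int_{\rn}W^2\dx}{A^{2/2^*}}\,\eps^2 + o(\eps^2).
\end{align*}

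The final step uses $\tilde{\la}=\la-\frac{N(N-2)}{4}>0$ and $\int_{\rn}W^2\dx>0$, which holds because $W\not\equiv 0$. With these, the $\eps^2$ coefficient is strictly negative, so for $\eps$ small the quotient lies strictly below $S_\infty^\phi$. Hence $S^\phi<S_\infty^\phi$.

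The main obstacle is ensuring that the negative $\eps^2$ gain is not swamped by the cutoff errors. This is exactly where \Cref{improved decay estimate} and $N\geq4$ enter: the improved decay is what makes $\int_{\rn} W^2\dx$ finite, and the cutoff errors are $O(\eps^N)$, which is $o(\eps^2)$ once $N\geq 4$. With those estimates already established, the remaining work is only the bookkeeping of the expansion.
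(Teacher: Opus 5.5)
Your proposal is correct and is essentially the paper's own proof: you test the conformal quotient with $W_\eps \in H_0^1(\Bee)^\phi$, insert \eqref{estimate for gradient term}--\eqref{estimate for critical term}, and expand the ratio as $S_\infty^\phi - 4\tilde{\la}\,\eps^2 \int_{\rn} W^2 \dx \big/ \(\int_{\rn}|W|^{2^*}\dx\)^{2/2^*} + o(\eps^2)$, which is negative relative to $S_\infty^\phi$ since $\tilde{\la}>0$. One small refinement to your closing remark: $O(\eps^N)=o(\eps^2)$ already holds for $N\geq 3$, and \Cref{improved decay estimate} is genuinely indispensable at $N=4$, because the generic decay $W=O(|x|^{2-N})$ gives $W\in L^2(\rn)$ only for $N>4$.
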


\begin{proof}
	For each $\eps>0$, $W_\eps \in H_0^1(\Bee)^\phi \setminus \{0\}$. Using the estimates \eqref{estimate for gradient term}, \eqref{estimate for square term}, and \eqref{estimate for critical term}, we have
	\begin{align*}
		S^\phi  \leq \f{\int_{\Bee} \abs{\nabla W_\eps}^2 \dx - \tilde{\la} \int_{\Bee} h^2 W_\eps^2 \dx}{\(\int_{\Bee} \abs{W_\eps}^{2^*} \dx \)^{\f{2}{2^*}}} & \leq \f{ \int_{\rn} \abs{\nabla W}^2 \dx - 4 \eps^2 \tilde{\la} \int_{\rn} W^2 \dx + o(\eps^2) }{ \( \int_{\rn} \abs{W}^{2^*} \dx + o(\eps^2) \)^{\f{2}{2^*}}  }\\
		& = \f{ \int_{\rn} \abs{\nabla W}^2 \dx - 4 \eps^2 \tilde{\la} \int_{\rn} W^2 \dx + o(\eps^2) }{ \( \int_{\rn} \abs{W}^{2^*} \dx \)^{\f{2}{2^*}} + o(\eps^2) }\\
		& = \f{\int_{\rn} \abs{\nabla W}^2 \dx }{ \(\int_{\rn} \abs{W}^{2^*} \dx\)^{\f{2}{2^*}}} - \f{4 \tilde{\la} \int_{\rn} W^2 \dx}{ \(\int_{\rn} \abs{W}^{2^*} \dx\)^{\f{2}{2^*}}} \eps^2 + o(\eps^2)
	\end{align*}
	Since $\tilde{\la} > 0$, for sufficiently small $\eps>0$, we have
	\begin{align*}
		S^\phi < \f{\int_{\rn} \abs{\nabla W}^2 \dx }{ \(\int_{\rn} \abs{W}^{2^*} \dx\)^{\f{2}{2^*}}} = S^\phi_\infty.
	\end{align*}
\end{proof}

\section{Existence result}\label{section for existence}

In this section, again we take $N \geq 4$ and $\la \in \(\f{N(N-2)}{4}, \f{(N-1)^2}{4}\)$. Firstly, for a fixed pair $\(\Ga, \phi\)$ satisfying \eqref{(A_1)} and \eqref{(A_2)}, we show that $S^\phi$ is achieved in $H_0^1\(\Bee\)^\phi$ or $H^1\(\bn\)^\phi$, using the strict energy estimate $S^\phi < S_\infty^\phi$. Then we prove our main result: \Cref{Existence theorem}, by combining the attainment result and by giving examples of different pairs $\(\Ga_j, \phi_j\)$, $j \in \{1, \cdots, \lfloor \f{N}{4} \rfloor \} $, so that the corresponding equivariant subspaces will have trivial intersection. For the groups $\Ga$ of our consideration, there are two possible scenarios: (i) $\mathrm{Fix}_{\rn} \Ga = \{0\}$, (ii) $\mathrm{Fix}_{\rn}\Ga$ becomes a nontrivial vector subspace of $\rn$. In both cases, we will prove the fundamental objective: ``\textbf{whether $S^\phi$ can be achieved?}", which are studied separately in the following two subsections:

\subsection{Case I: $\mathrm{Fix}_{\rn} \Ga = \{0\}$}

If $\mathrm{Fix}_{\rn} \Ga = \{0\}$, we work with the conformal equation \eqref{conformal equation} on $\Bee$ and use concentration compactness principle by Lions \cite{Lions limit case 1} together with the condition \eqref{(A_2)} to recover the compactness of minimizing sequence of $S^\phi$. The concentration compactness principle for the bounded sequences in $H_0^1\(\Bee\)$ is as follows:\\

\begin{lemma}\label{Lions}
	Let $\{u_n\}$ be a bounded sequence in $H_0^1(\Omega)$ for some bounded domain $\Omega$ of $\rn$. We may assume that $\{u_n\}$ converges weakly in $H_0^1\(\Omega\)$ to some $u$ and that $|u_n|^{2^*}$ converges weakly in the sense of measures to some $\nu$. Then there exists $\{x_i\}_{i \geq 1}$ in $\overline{\Omega}$ and $\{\nu_i\}_{i \geq 1}$ in $[0,\infty)$ such that
	\begin{align*}
		\nu = |u|^{2^*} + \sum_{i=1}^{\infty} \nu_i \delta_{x_i}, \quad \sum_{i=1}^{\infty} \nu_i^{\f{N-2}{N}} < \infty.
	\end{align*}
\end{lemma}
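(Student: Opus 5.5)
We follow Lions' classical argument for the limit case. First we extend each $u_n$ by zero outside $\Omega$, so that $\{u_n\}$ is bounded in $D^{1,2}(\rn)$ with supports in the compact set $\overline{\Omega}$. Passing to a subsequence, we may assume the following:
\begin{align*}
	u_n \rightharpoonup u \ \text{in} \ H_0^1(\Omega), \quad u_n \to u \ \text{in} \ L^2(\Omega) \ \text{and a.e.}, \quad |\nabla (u_n - u)|^2 \rightharpoonup \mu, \quad |u_n|^{2^*} \rightharpoonup \nu .
\end{align*}
The strong $L^2$ convergence comes from Rellich's theorem, since $\Omega$ is bounded. The two weak limits are in the sense of finite Radon measures on $\rn$, and both $\mu$ and $\nu$ are supported in $\overline{\Omega}$.

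Set $v_n := u_n - u$. By the Brezis--Lieb lemma, applied in localized form with a test function $\varphi \in C_c(\rn)$, we get
\begin{align*}
	\int |\varphi|\,\big(|u_n|^{2^*} - |v_n|^{2^*}\big) \dx \to \int |\varphi|\,|u|^{2^*} \dx .
\end{align*}
Hence $|v_n|^{2^*} \rightharpoonup \tilde\nu := \nu - |u|^{2^*}$, and $\tilde\nu$ is a nonnegative finite measure. The task is therefore reduced to showing that $\tilde\nu$ is a countable sum of Dirac masses with summable $(N-2)/N$ powers.

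The key step is a reverse Hölder inequality between $\tilde\nu$ and $\mu$. For $\varphi \in C_c^\infty(\rn)$ we apply the Sobolev inequality \eqref{Sobolev} to $\varphi v_n$:
\begin{align*}
	S \Big( \int |\varphi|^{2^*} |v_n|^{2^*} \dx \Big)^{\f{2}{2^*}} \leq \int |\varphi|^2 |\nabla v_n|^2 \dx + \int |\nabla \varphi|^2 v_n^2 \dx + 2 \int \varphi\, v_n\, \nabla\varphi \cdot \nabla v_n \dx .
\end{align*}
Since $v_n \to 0$ in $L^2(\Omega)$ and $\nabla v_n$ is bounded in $L^2$, the last two terms tend to $0$. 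Letting $n \to \infty$ gives
\begin{align*}
	S \Big( \int |\varphi|^{2^*} \mathrm{d}\tilde\nu \Big)^{\f{2}{2^*}} \leq \int |\varphi|^2 \mathrm{d}\mu .
\end{align*}
By approximation this extends to indicator functions of Borel sets, so that
\begin{align*}
	\tilde\nu(E)^{\f{2}{2^*}} \leq S^{-1} \mu(E) \quad \text{for every Borel set } E .
\end{align*}

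The main obstacle is to deduce atomicity from this inequality; here we use that $2/2^* < 1$. First, the inequality shows $\tilde\nu \ll \mu$, so $\tilde\nu = f\,\mu$ for some $f \in L^1(\mu)$. Since $\mu$ is finite, it has at most countably many atoms $\{x_i\}$. Let $x$ be a point that is not an atom of $\mu$, and take small balls $E = \Bee(x,r)$. The inequality gives
\begin{align*}
	\f{\tilde\nu(E)}{\mu(E)} \leq S^{-\f{2^*}{2}}\, \mu(E)^{\f{2^*}{2} - 1} \to 0 \quad \text{as } r \to 0 .
\end{align*}
By the Besicovitch differentiation theorem, $f = 0$ $\mu$-a.e. off the atoms. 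Hence $\tilde\nu = \sum_i \nu_i \delta_{x_i}$ with $\nu_i := \tilde\nu(\{x_i\})$, and each $x_i$ lies in $\overline{\Omega}$ because $\operatorname{supp}\tilde\nu \subset \overline{\Omega}$.

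Finally, applying the inequality to the singletons $E = \{x_i\}$ gives $\nu_i^{\f{N-2}{N}} = \nu_i^{\f{2}{2^*}} \leq S^{-1} \mu(\{x_i\})$. Summing over $i$,
\begin{align*}
	\sum_i \nu_i^{\f{N-2}{N}} \leq S^{-1} \mu(\rn) \leq S^{-1} \sup_n \norm{\nabla v_n}_{L^2}^2 < \infty ,
\end{align*}
which yields $\nu = |u|^{2^*} + \sum_i \nu_i \delta_{x_i}$ with the stated summability.
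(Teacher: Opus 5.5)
Your proof is correct and is essentially Lions' original argument: Brezis--Lieb to reduce to $v_n=u_n-u$, the Sobolev inequality applied to $\varphi v_n$ to obtain the reverse H\"older bound $S\,\tilde\nu(E)^{2/2^*}\le \mu(E)$, atomicity of $\tilde\nu$ from $2/2^*<1$ via differentiation of measures, and summability read off from the atoms of $\mu$. The paper does not prove this lemma; it only invokes Lions' concentration-compactness principle, so your write-up supplies the standard argument behind that citation.
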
 

\begin{theorem}\label{Attainment result when fixed point is trivial}
	Let $\(\Ga, \phi\)$ satisfy \eqref{(A_1)}, \eqref{(A_2)} and $\mathrm{Fix}_{\rn} \Ga = \{0\}$. Then $S^\phi$ is attained in $H_0^1\(\Bee\)^\phi$.
\end{theorem}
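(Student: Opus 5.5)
Take a minimizing sequence $\{v_n\} \subset H_0^1(\Bee)^\phi$ for $S^\phi$, normalized by $\int_{\Bee} |v_n|^{2^*}\dx = 1$, so that $Q(v_n) \to S^\phi$. Since $Q^{1/2}$ is an equivalent norm on $H_0^1(\Bee)$, the sequence is bounded. Up to a subsequence, $v_n \rightharpoonup v$ weakly in $H_0^1(\Bee)$ and strongly in $L^2_{loc}$; the closed subspace $H_0^1(\Bee)^\phi$ is weakly closed, so $v \in H_0^1(\Bee)^\phi$. Further, $|v_n|^{2^*} \rightharpoonup \nu$ and $|\nabla v_n|^2 \rightharpoonup \mu$ as measures on $\overline{\Bee}$. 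By \Cref{Lions}, $\nu = |v|^{2^*} + \sum_i \nu_i \delta_{x_i}$, and the usual companion estimate gives $\mu \geq |\nabla v|^2 + \sum_i \mu_i \delta_{x_i}$. Because the weight $h^2$ is unbounded at $\partial\Bee$, I will handle the term $\int h^2 v_n^2$ with a Brezis--Lieb type splitting: $Q(v_n) = Q(v) + Q(v_n - v) + o(1)$, where $Q(v_n-v) \geq \int |\nabla (v_n - v)|^2 - \tilde\la\int h^2 (v_n-v)^2$.

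The key step is to show that every atom carries the equivariant energy $S_\infty^\phi$ rather than only $S$. Since each $v_n$ is $\Lambda=\ker\phi$-invariant and $|v_n|$ is $\Ga$-invariant, the measure $\nu$ is $\Ga$-invariant. Hence if $\nu_i>0$ at $x_i$, every point of $\Ga(x_i)$ carries the same mass $\nu_i$. Summability of $\nu_i^{(N-2)/N}$ forces the orbit to be finite, and then \eqref{(A_2)} gives $\Ga(x_i)=\{x_i\}$, i.e. $x_i \in \mathrm{Fix}_{\rn}\Ga = \{0\}$. So concentration can occur at most at the origin. The boundary $\partial \Bee$ contains no fixed points, so boundary concentration is excluded as well. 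I also need to rule out mass escaping to the boundary in a non-atomic way through the Hardy-type weight. For this I will use the inequality $\int |\nabla w|^2 - \tilde\la \int h^2 w^2 \geq S_{\la,2^*-1}\|w\|_{2^*}^2$ locally near the boundary, together with $\tilde\la<1/4$. By the Hardy inequality $\int h^2 w^2 \le 4 \cdot$(a constant)$\int|\nabla w|^2$ near $\partial\Bee$, the quadratic form stays coercive on the tail, so any residual mass near the boundary is still controlled.

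At the origin, localize: take a radial cutoff $\psi_\delta$ supported in $\Bee(0,\delta)$. Then $\psi_\delta(v_n - v)$ is $\phi$-equivariant, because the cutoff is $\Ga$-invariant. Since $h$ is bounded near $0$, the $\tilde\la$-term of $\psi_\delta(v_n-v)$ tends to zero by compactness of $H^1 \hookrightarrow L^2$. Extending by zero and using the definition of $S_\infty^\phi$, I get $\mu_0 \geq S_\infty^\phi \nu_0^{2/2^*}$. This equivariant reverse inequality at the origin is the main point: it is where \eqref{(A_1)}, \eqref{(A_2)} and the triviality of the fixed point set enter.

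To conclude, write $t=\int|v|^{2^*}$, so that $1 = t + \nu_0$ after excluding boundary loss. Then
\begin{align*}
S^\phi = \lim Q(v_n) \geq Q(v) + \mu_0 \geq S^\phi t^{2/2^*} + S_\infty^\phi \nu_0^{2/2^*}.
\end{align*}
If $\nu_0>0$, strict concavity of $s\mapsto s^{2/2^*}$ together with $S^\phi<S^\phi_\infty$ from \Cref{local compactness level} gives a contradiction, unless $t=0$; in that case the inequality reads $S^\phi \geq S_\infty^\phi$, again a contradiction. Hence $\nu_0=0$, $t=1$, $v_n\to v$ in $L^{2^*}$, and $Q(v)\le S^\phi$, so $v$ attains $S^\phi$.

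The main obstacle I expect is the boundary behaviour of $h^2$, namely the possible loss of mass at $\partial\Bee$ where the conformal weight blows up. If the local Hardy argument proves delicate, I would instead transfer to $H^1(\bn)^\phi$ and use the dichotomy \Cref{Dichotomy result}. Since $\mathrm{Fix}=\{0\}$, alternative (2) keeps the orbits within bounded distance of $0$, while alternative (1) would split the mass into infinitely many disjoint equal pieces, which is impossible for a normalized sequence with a uniform $L^{2^*}$ bound.
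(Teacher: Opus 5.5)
Your overall route is the paper's: a normalized minimizing sequence, the splitting $Q(v_n)=Q(v)+Q(z_n)+o(1)$ with $z_n=v_n-v$, Lions' lemma on $\overline{\Bee}$, $\Ga$-invariance of the defect measure together with \eqref{(A_2)} and $\mathrm{Fix}_{\rn}\Ga=\{0\}$ to put all concentration at the origin, an equivariant radial cutoff there to bring in $S_\infty^\phi$, and concavity with $S^\phi<S_\infty^\phi$.

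\textbf{The gap.} The inequality $\lim Q(v_n)\ge Q(v)+\mu_0$ does not follow as written. Your $\mu$ is the limit of $\abs{\nabla v_n}^2$ alone, and nothing controls the negative term $-\tilde{\la}\int_{\Bee}h^2z_n^2\dx$. Your ``splitting'' inequality for $Q(v_n-v)$ is just the definition of $Q$. This term need not vanish, because $\int_{\Bee}h^2w^2\dx=\norm{h^{-\f{N-2}{2}}w}_{L^2(\bn)}^2$, and $H_0^1(\Bee)$ does not embed compactly into this weighted space. Think of equivariant functions spreading toward $\partial\Bee$, i.e.\ to hyperbolic infinity, with no $L^{2^*}$ concentration. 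So your local bound $\mu_0\ge S_\infty^\phi\nu_0^{2/2^*}$, which is correct, cannot be fed into the global energy until you show that the part of $Q(z_n)$ away from the origin is asymptotically nonnegative.

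\textbf{How the paper closes it.} Take radial $\xi,\zeta$ with $\xi^2+\zeta^2=1$, $\xi=1$ on $\Bee(0,\rho)$ and $\xi=0$ off $\Bee(0,2\rho)$. Then
\begin{align*}
Q(z_n)=Q(\xi z_n)+Q(\zeta z_n)-\int_{\Bee}(\abs{\nabla\xi}^2+\abs{\nabla\zeta}^2)z_n^2\dx .
\end{align*}
The last integral lives on an annulus and tends to $0$ by Rellich. The term $Q(\zeta z_n)\ge 0$ because $Q$ is nonnegative on all of $H_0^1(\Bee)$ when $\tilde{\la}<\f{1}{4}$; this is where your Hardy remark belongs, since it controls energy, not mass. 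Finally, $\liminf Q(\xi z_n)\ge S_\infty^\phi\nu_0^{2/2^*}$, since $h$ is bounded on the support of $\xi$ and $\xi z_n\in D^{1,2}(\rn)^\phi$. Together these give $S^\phi\ge S^\phi t^{2/2^*}+S_\infty^\phi\nu_0^{2/2^*}$ without introducing $\mu$ at all.

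\textbf{The boundary-mass concern is unfounded.} The measures $\abs{v_n}^{2^*}\dx$ live on the compact set $\overline{\Bee}$, so total mass passes to the limit and Lions' decomposition accounts for all of it. Escape to hyperbolic infinity shows up as atoms on $\partial\Bee$, which your orbit argument already excludes. The Hardy-near-the-boundary discussion of mass and the fallback via \Cref{Dichotomy result} are therefore unnecessary.

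\textbf{Minor point.} \eqref{(A_1)} is not what makes the origin step work. It is needed for the equivariant spaces to be nontrivial and, via Clapp--Rios, for the strict inequality of \Cref{local compactness level}.
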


\begin{proof}
	Let $\{v_n\} \subset H_0^1\(\Bee\)^\phi$ be a minimizing sequence for $S^\phi$ such that 
	\begin{align}
		\int_{\Bee} \abs{v_n}^{2^*} \dx = 1, \quad \text{and} \quad Q(v_n) \to S^\phi. \label{v_n is minimizing}
	\end{align}
	After passing to a subsequence $v_n \rightharpoonup v$ in $H_0^1\(\Bee\)^\phi$, i.e., in $H_0^1\(\Bee\)$. Set $z_n := v_n - v$, then $z_n \rightharpoonup 0$ in $H_0^1\(\Bee\)$ and $z_n \in H_0^1\(\Bee\)^\phi$. From the Br\'ezis-Lieb lemma, we have
	\begin{align*}
		1 = \int_{\Bee} \abs{v_n}^{2^*} \dx = \int_{\Bee} |v|^{2^*} \dx + \int_{\Bee} |z_n|^{2^*} \dx + o(1).
	\end{align*} 
	Furthermore, since $Q(\cdot)^{\f{1}{2}}$ is a norm in $H_0^1(\Bee)$ we have
	\begin{align}
		Q(v_n) = Q(v) + Q(z_n) + o(1). \label{quadratic term decomposition}
	\end{align}
	Let us set 
	\begin{align*}
		\al := \int_{\Bee} |z_n|^{2^*} \dx + o(1) \quad \text{and} \quad \beta := \int_{\Bee} |v|^{2^*} \dx.
	\end{align*}
	Note that $\al, \beta \in [0,1]$ and $\al + \beta = 1$. Now, from the definition of $S^\phi$, we have 
	\begin{align}
		Q(v) \geq S^\phi \( \int_{\Bee} |v|^{2^*} \dx \)^{\f{2}{2^*}} = S^\phi \beta^{\f{2}{2^*}}. \label{inequality for Q(v)}
	\end{align}
	Now, by Lions' concentration compactness lemma (\Cref{Lions}) we argue that $|z_n|^{2^*}$ weakly converges in the sense of measure to some $\nu$ and there exists $\{x_i\}_{i \geq 1}$ in $\overline{\Bee}$ and $\{\nu_i\}_{i \geq 1}$ in $[0,\infty)$ such that $\nu = \sum_{i=1}^{\infty} \nu_i \delta_{x_i}.$\\
	
	We take the zero extension of each $z_n$, and we have $z_n \in D^{1,2}\(\rn\)^\phi$. Now, we show that $\nu$ is $\Ga$-invariant, i.e., $\nu(gE) = \nu(E)$, for every Borel set $E \subset \overline{\Bee}$ and $g \in \Ga$. First, we note that since $z_n$ is $\phi$-equivariant, $|z_n|^{2^*}$ is $\Ga$-invariant. Let us set $\nu^n := |z_n|^{2^*} \dx$, then for any Borel set $E \subset \overline{\Bee}$, we get
	\begin{align*}
		\nu^n \(gE\) = \int_{gE} |z_n(x)|^{2^*} \dx = \int_{E} |z_n(gy)|^{2^*} \dy = \int_{E} |z_n(y)|^{2^*} \dy = \nu^n (E). 
	\end{align*}
	Therefore, $\nu^n$ is a $\Ga$-invariant measure. We have that $\nu^n \rightharpoonup \nu$, i.e., as $n \to \infty$, 
	\begin{align*}
		\int_{\Bee} \psi \mathrm{~d}\nu^n \to \int_{\Bee} \psi \mathrm{~d}\nu, \;\; \forall \psi \in C\(\overline{\Bee}\).
	\end{align*}
	Since $\nu^n$ is $\Ga$-invariant, for each $\psi \in C\(\overline{\Bee}\)$ we have
	\begin{align*}
		\int_{\Bee} \psi (gx) \mathrm{~d}\nu^n(x) = \int_{\Bee} \psi(x) \mathrm{~d}\nu^n(x) \;
		\Rightarrow \; \int_{\Bee} \psi (gx) \mathrm{~d}\nu(x) = \int_{\Bee} \psi(x) \mathrm{~d}\nu(x).
	\end{align*}
	Hence $\nu(gE) = \nu(E)$, for every Borel set $E \subset \overline{\Bee}$ and $g \in \Ga$, i.e., $\nu$ is $\Ga$-invariant.\\
	
	The condition \eqref{(A_2)} and $\mathrm{Fix}_{\rn} \Ga = \{0\}$ implies $\# \Ga(x) = \infty$, for every $x \in \rn \setminus \{0\}$, and the Dirac mass can only appear at the origin, which is fixed point for the group $\Ga$. Indeed, if Dirac mass appears at some point $x_1 \(\neq 0\)$, then since $\nu$ is $\Ga$-invariant, we will get Dirac masses for each point in the orbit of $x_1$. Since $\# \Ga(x_1) = \infty$, it will contradict the fact $\norm{z_n}_{L^{2^*} \(\overline{\Bee}\)} \leq 1$. Therefore, no concentration can occur at any $x \in \overline{B} \setminus \{0\}$. Hence, $\nu = \al \delta_0$.\\
	
	Now, let us consider a radial cutoff function $\xi \in C_c^\infty(\Bee)$ such that
	\begin{align*}
		\xi = \begin{cases}
			1 \quad & \text{in} \; \Bee(0,\rho)\\
			0 \quad & \text{outside} \; \Bee(0,2\rho)
		\end{cases}, \quad \text{where} \; \rho < \f{1}{3}.
	\end{align*}
	Then $\xi z_n \in D^{1,2}(\rn)^\phi$ and we have
	\begin{align*}
		\int_{\rn} \abs{\nabla \(\xi z_n\)}^2 \dx \geq S_\infty^\phi \(\int_{\rn} \abs{\xi z_n}^{2^*} \dx \)^{\f{2}{2^*}}.
	\end{align*}
	Using Rellich-Kondrachov theorem, we have $z_n \to 0$ in $L^2\(\Bee(0,2\rho)\)$. Since $h$ is bounded on $\Bee\(0,2\rho\)$, we argue $h \xi z_n \to 0$ in $L^2\(\Bee\)$. This implies
	\begin{align*}
		& Q\(\xi z_n\) = \int_{\rn} \abs{\nabla \(\xi z_n\)}^2 \dx + o(1)\\
		\Rightarrow & \liminf_{n \to \infty} Q\(\xi z_n\) \geq S_\infty^\phi \( \liminf_{n \to \infty} \(\int_{\rn} \abs{\xi z_n}^{2^*} \dx \)^{\f{2}{2^*}}\).
	\end{align*}
	Now, for a small fixed $\rho > 0$, $\lim\limits_{n \to \infty} \int_{\rn} \abs{\xi z_n}^{2^*} \dx = \al$ and this implies
	\begin{align}
		\liminf_{n \to \infty} Q\(\xi z_n\) \geq S_\infty^\phi \al^{\f{2}{2^*}}. \label{liminf of quadratic term with cutoff}
	\end{align}
	Now, we choose another radial cutoff function $\zeta$ such that
	\begin{align*}
		\zeta = \begin{cases}
			0 \quad & \text{in} \; \Bee(0,\rho)\\
			1 \quad & \text{outside} \; \Bee(0,2\rho)
		\end{cases} \quad \text{and} \quad \xi^2 + \zeta^2 = 1.
	\end{align*}
	Then, we have $\abs{\nabla \xi} \leq \f{C}{\rho}$ and $\abs{\nabla \zeta} \leq \f{C}{\rho}$. Now
	\begin{align*}
		\abs{\nabla \(\xi z_n\)}^2 + \abs{\nabla \(\zeta z_n\)}^2 & = \(\xi^2 + \zeta^2\) \abs{\nabla z_n}^2 + \( \abs{\nabla \xi}^2 + \abs{\nabla \zeta}^2\) z_n^2 + z_n \( \nabla z_n \cdot \nabla \(\xi ^2 + \zeta^2\)\)\\
		& = \abs{\nabla z_n}^2 + \( \abs{\nabla \xi}^2 + \abs{\nabla \zeta}^2\) z_n^2
	\end{align*}
	Now using this identity, we write
	\begin{align*}
		Q(z_n) & = \int_{\Bee} \abs{\nabla z_n}^2 \dx - \tilde{\la} \int_{\Bee} h^2 z_n^2 \dx\\
		& = \int_{\Bee} \( \abs{\nabla \(\xi z_n\)}^2 + \abs{\nabla \(\zeta z_n\)}^2 - \( \abs{\nabla \xi}^2 + \abs{\nabla \zeta}^2\) z_n^2 \) \dx - \tilde{\la} \int_{\Bee} \( h^2 \xi ^2 z_n^2 + h^2 \zeta^2 z_n^2\) \dx\\
		& = Q (\xi z_n) + Q \(\zeta z_n\) - \int_{\Bee} \( \abs{\nabla \xi}^2 + \abs{\nabla \zeta}^2\) z_n^2 \dx\\
		& \geq Q (\xi z_n) - \int_{\Bee} \( \abs{\nabla \xi}^2 + \abs{\nabla \zeta}^2\) z_n^2 \dx.
	\end{align*}
	For fixed $\rho > 0$, using the compact embedding $H_0^1\(\Bee\) \hookrightarrow L^2 \(\Bee(0,2\rho) \setminus \Bee(0,\rho)\)$, we have
	\begin{align*}
		\int_{\Bee} \( \abs{\nabla \xi}^2 + \abs{\nabla \zeta}^2\) z_n^2 \dx \to 0.
	\end{align*}
	Therefore, from \eqref{liminf of quadratic term with cutoff}, we obtain
	\begin{align*}
		\liminf_{n \to \infty} Q(z_n) \geq \liminf_{n \to \infty} Q (\xi z_n) \geq S_\infty^\phi \al^{\f{2}{2^*}}.
	\end{align*}
	Now, from \eqref{quadratic term decomposition}, \eqref{v_n is minimizing}, and \eqref{inequality for Q(v)} we argue that
	\begin{align*}
		S^\phi \geq S^\phi \beta^{\f{2}{2^*}} + S_\infty^\phi \al^{\f{2}{2^*}}.
	\end{align*}
	If possible let $\al > 0$, then we use the \Cref{local compactness level} and the above inequality becomes
	\begin{align*}
		S^\phi \geq S^\phi \beta^{\f{2}{2^*}} + S_\infty^\phi \al^{\f{2}{2^*}} > S^\phi \(\beta^{\f{2}{2^*}} +  \al^{\f{2}{2^*}}\) \geq S^\phi (\al + \beta)^{\f{2}{2^*}} = S^\phi.
	\end{align*}
	This is a contradiction. Hence $\al = 0$, i.e.,
	\begin{align*}
		v_n \to v \;\; \text{in} \;\; L^{2^*}\(\Bee\), \quad \text{and} \quad \int_{\Bee} |v|^{2^*} \dx = 1.
	\end{align*}
	Now from the weak lower semicontinuity of $Q$, we have $Q(v) \leq \liminf_{n \to \infty} Q(v_n) = S^\phi$. Hence $Q(v) = S^\phi$ and $\int_{\Bee} |v|^{2^*} \dx = 1$.
\end{proof}

\begin{cor}\label{Corollary for existence in the trivial fixed point case}
		Let $\(\Ga, \phi\)$ satisfy \eqref{(A_1)}, \eqref{(A_2)} and $\mathrm{Fix}_{\rn} \Ga = \{0\}$. Then $S^\phi$ is attained in $H^1\(\bn\)^\phi$.
\end{cor}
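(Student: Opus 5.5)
The plan is to transfer the minimizer from \Cref{Attainment result when fixed point is trivial} back to the hyperbolic space through the conformal change of metric. Let $v \in H_0^1\(\Bee\)^\phi\setminus\{0\}$ attain $S^\phi$, and set
\begin{align*}
	u(x) := \(\f{2}{1-|x|^2}\)^{-\f{N-2}{2}} v(x) = h(x)^{-\f{N-2}{2}} v(x), \quad x \in \bn.
\end{align*}
This is exactly the substitution used to pass from \eqref{Poincare-Sobolev} to its conformal version. It gives
\begin{align*}
	\norm{u}_\la^2 = Q(v) \quad \text{and} \quad \int_{\bn}|u|^{2^*}\dv = \int_{\Bee}|v|^{2^*}\dx,
\end{align*}
since $\al(2^*-1)=0$. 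Hence the quotient defining $S^\phi$ takes the same value at $u$ and at $v$.

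It remains to check that $u \in H^1\(\bn\)^\phi$.
\begin{itemize}
	\item \emph{Finite norm.} Because $\la<\la_1$, $\norm{\cdot}_\la$ is equivalent to the $H^1\(\bn\)$ norm, and $\norm{u}_\la^2 = Q(v)<\infty$. To make this rigorous, I will first approximate $v$ by $C_c^\infty(\Bee)$ functions $v_k \to v$ in $H_0^1\(\Bee\)$. The corresponding $u_k$ lie in $C_c^\infty(\bn)$ and are Cauchy in $\norm{\cdot}_\la$ by the identity above, which holds for smooth compactly supported functions by integration by parts. Their limit is $u$, so $u\in H^1\(\bn\)$.
	\item \emph{Equivariance.} $h$ is radial, hence $\Ga$-invariant, so $u(gx)=h(x)^{-\f{N-2}{2}}v(gx)=\phi(g)u(x)$.
\end{itemize}

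Finally, the two infima coincide: as recorded just before \Cref{local compactness level}, $S^\phi$ computed over $H^1\(\bn\)^\phi$ equals the one over $H_0^1\(\Bee\)^\phi$. Indeed, the map $v\mapsto h^{-\f{N-2}{2}}v$ is a bijection between the two equivariant spaces preserving both terms of the quotient. Therefore $u$ attains $S^\phi$ in $H^1\(\bn\)^\phi$.

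If needed, a Lagrange multiplier rescaling together with the principle of symmetric criticality then produces a solution of \eqref{(1)}, but this is not required for the corollary.

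The only delicate step is justifying that the conformal identity $\norm{u}_\la^2=Q(v)$ holds on the whole space and not just for smooth compactly supported functions. The density argument handles this. The approximation can be taken $\phi$-equivariant using $P_\phi$ if desired, but equivariance is not needed for the density itself.
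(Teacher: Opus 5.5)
Your proposal is correct and takes the same route as the paper: the paper's proof sets $u := h^{-\frac{N-2}{2}} v$ for the minimizer $v \in H_0^1(\mathsf{B})^\phi$ from the preceding theorem, and then invokes conformal covariance to conclude that $u \in H^1(\mathbb{B}^N)^\phi$ is a minimizer. Your density argument for $\|u\|_\lambda^2 = Q(v)$ and your equivariance check via the radial symmetry of $h$ make explicit the details the paper leaves implicit.
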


\begin{proof}
	Let $v \in H_0^1\(\Bee\)^\phi$ be the minimizer for $S^\phi$. Then from the conformal covariance property, we get that $u := h^{- \f{N-2}{2}} v$ is in $H^1\(\bn\)^\phi$ and it is a minimizer for $S^\phi$.
\end{proof}

\subsection{Case II: $\mathrm{dim} \(\mathrm{Fix}_{\rn} \Ga \) > 0$}

When $\mathrm{dim} \(\mathrm{Fix}_{\rn} \Ga \) > 0$, the preceding argument is no longer sufficient, since concentration may drift along the unbounded fixed point set. In this case, we work directly with the original equation \eqref{(1)} on the entire hyperbolic space $\bn$ and recenter the minimizing sequence via suitable hyperbolic translations. Our analysis requires testing the equation with localized functions around the concentration points, and these test functions are generally not $\phi$-equivariant, so an estimate restricted to the equivariant subspace $H^1\(\bn\)^\phi$ is sometimes not sufficient. Using the group averaging projection \eqref{Projection map} onto the equivariant subspace, we prove in \Cref{Principle of symetric criticality type result} that the Palais-Smale estimate of minimizing sequence holds for arbitrary test functions in $H^1\(\bn\)$. This result is crucial for our subsequent analysis.\\

We recall that
\begin{align*}
	S^\phi = \inf_{u \in H^1(\bn)^\phi \setminus \{0\}} \f{\norm{u}_\la^2}{\(\int_{\bn} |u|^{2^*} \dv \)^{\f{2}{2^*}}}.
\end{align*}

Now, by Ekeland's variational principle, there exists a minimizing sequence $\{u_n\} \subset H^1\(\bn\)^\phi$ such that
\begin{align*}
	&\text{(a)} \; \int_{\bn} \abs{u_n}^{2^*} \dv = 1, \quad \forall n \in \N,\\
	&\text{(b)} \; \norm{u_n}_\la^2 = S^\phi + o(1), \quad \text{as} \; n \to \infty,\\
	&\text{(c)} \; \< u_n, \varphi \>_\la - S^\phi \int_{\bn} \abs{u_n}^{2^*-2} u_n \varphi \dv = o(1) \norm{\varphi}_\la, \quad \forall \varphi \in H^1\(\bn\)^\phi.
\end{align*}

\medskip

Now, we state the Gromov's covering lemma, which yields a uniformly locally finite covering of $\bn$. For details, see Lemma 1.6 in \cite{Hebey}.\\

\begin{lemma}\label{Gromov's covering lemma}
	Let $(M,g)$ be an $N$-dimensional complete Riemannian manifold with Ricci curvature bounded from below by some $\kappa \in \R$, and let $r>0$ be given. There exists a sequence of points $\{x_j\} \subset M$ such that:\\
	\begin{enumerate}
		\item[(i)] The family $\{B_r(x_j)\}$ is a uniformly locally finite covering of $M$, i.e., $M = \cup_{j \in \N} B_r(x_j)$ and there exists an integer $L$ in terms of $N, r, \kappa$ such that each point $x \in M$ has a neighborhood which intersects at most $L$ many geodesic balls in the family $\{B_r(x_j)\}$;\\
		\item[(ii)] for any $i \neq j$, $ B_{\f{r}{2}}(x_i) \cap B_{\f{r}{2}}(x_j) = \emptyset$.
	\end{enumerate}
\end{lemma}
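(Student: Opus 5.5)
The statement is Gromov's covering lemma for a complete manifold $(M,g)$ with $\mathrm{Ric}\geq \kappa$. I will prove it with a maximal-packing construction, and then control the overlaps of the resulting balls by comparing volumes via Bishop--Gromov.

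\emph{Construction.} Choose a maximal family $\{x_j\}\subset M$ with $d_g(x_i,x_j)\geq r$ for $i\neq j$. Such a family exists by Zorn's lemma. It is countable, because $M$ is separable: the balls $B_{r/2}(x_j)$ are pairwise disjoint and open.

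\emph{Properties (i) covering and (ii).} By maximality, every $x\in M$ satisfies $d_g(x,x_j)<r$ for some $j$, so $\{B_r(x_j)\}$ covers $M$. For $i\neq j$, the triangle inequality and $d_g(x_i,x_j)\geq r$ give $B_{r/2}(x_i)\cap B_{r/2}(x_j)=\emptyset$, which is (ii).

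\emph{Uniform local finiteness.} Fix $x\in M$ and take the neighborhood $B_r(x)$. If $B_r(x)\cap B_r(x_j)\neq\emptyset$, then $d_g(x,x_j)<2r$. The ball $B_{r/2}(x_j)$ is then contained in $B_{5r/2}(x)$, and also $B_{5r/2}(x)\subset B_{9r/2}(x_j)$. Suppose there are $m$ such indices $j$. The balls $B_{r/2}(x_j)$ are disjoint, so
\[
m\,\min_j \mathrm{Vol}(B_{r/2}(x_j))\leq \mathrm{Vol}(B_{5r/2}(x)).
\]
Now apply the Bishop--Gromov comparison theorem, which needs completeness and $\mathrm{Ric}\geq (N-1)k$ with $k=\min(\kappa,0)/(N-1)$ after rescaling. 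It gives
\[
\mathrm{Vol}(B_{9r/2}(x_j))/\mathrm{Vol}(B_{r/2}(x_j))\leq V_k(9r/2)/V_k(r/2),
\]
where $V_k$ is the volume of a ball in the space form of curvature $k$. Combining this with $\mathrm{Vol}(B_{5r/2}(x))\leq \mathrm{Vol}(B_{9r/2}(x_j))$ for each such $j$ yields
\[
m\leq L:=V_k(9r/2)/V_k(r/2),
\]
which depends only on $N$, $r$ and $\kappa$.

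\emph{Main obstacle.} The one genuinely nontrivial ingredient is this volume-ratio estimate, which is uniform in the base point. It is exactly where the Ricci lower bound enters, and I would quote Bishop--Gromov rather than reprove it.

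\emph{Our setting.} For $\bn$ one could avoid the general theorem entirely. Here $\mathrm{Ric}=-(N-1)$, and since isometries act transitively, $\mathrm{Vol}(B_s(y))$ is independent of $y$. The same counting argument then gives $L=\mathrm{Vol}(B_{5r/2})/\mathrm{Vol}(B_{r/2})$ directly.
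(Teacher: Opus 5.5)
Your proof is correct, and it is essentially the standard argument behind Lemma 1.6 in Hebey's book, which the paper only cites and does not prove: a maximal $r$-separated net gives the covering and the disjointness of the balls of radius $r/2$, and Bishop--Gromov comparison with $k=\min(\kappa,0)/(N-1)$ bounds the number of overlaps uniformly in the base point. Two minor points: since the statement asks for an integer $L$, you should take $L=\lfloor V_k(9r/2)/V_k(r/2)\rfloor$, and your shortcut for $\bn$ using transitivity of the isometry group is also valid.
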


\medskip

In the next Lemma, we show that up to a subsequence $\{u_n\}$ can be recentered along $\mathrm{Fix}_{\bn} \Ga$, so that there is a fixed amount of positive $L^{2^*}$-mass inside some geodesic ball, uniform with respect to $n$. Also, note that for our groups when $\mathrm{dim} \(\mathrm{Fix}_{\rn} \Ga \) > 0$, we will have $\{(x_1, \cdots, x_N) \in \bn : x_1 = \cdots = x_{N-1} = 0\} \subset \mathrm{Fix}_{\bn} \Ga$. See the examples of groups in the next subsection. \\

\begin{lemma}
	Let $\{u_n\} \subset H^1\(\bn\)^\phi$ such that $\int_{\bn} \abs{u_n}^{2^*} \dv = 1$ and $\norm{u_n}_\la^2 \to S^\phi$, then there exists a sequence of fixed points $\{b_n\} \subset \mathrm{Fix}_{\bn} \Ga$ such that up to a subsequence the functions $v_n = u_n \circ \tau_{b_n}$ satisfy the following:
	\begin{align*}
		\int_{B_{\tilde{R}}} \abs{v_n}^{2^*} \dv \geq \delta,
	\end{align*}
	for some $\tilde{R}, \delta > 0$.
\end{lemma}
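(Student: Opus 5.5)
We first show that the $L^{2^*}$-mass cannot vanish uniformly on small geodesic balls, and then use the orbit dichotomy (\Cref{Dichotomy result}) to move the concentration point onto $\mathrm{Fix}_{\bn}\Ga$.

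\textbf{Step 1 (non-vanishing).} Fix $r>0$ and a covering $\{B_r(x_j)\}$ of $\bn$ from \Cref{Gromov's covering lemma}, with overlap number $L$. Since the curvature of $\bn$ is constant, there is a uniform local Sobolev inequality on $B_r(x_j)$; one can transport it to $B_r$ by the isometry $\tau_{x_j}$ using \Cref{change of variable}. It reads
\begin{align*}
\Big(\int_{B_r(x_j)}|u|^{2^*}\dv\Big)^{2/2^*}\le C\int_{B_r(x_j)}\big(|\nabla_{\bn}u|^2+u^2\big)\dv .
\end{align*}
Write $m_n:=\sup_j\int_{B_r(x_j)}|u_n|^{2^*}\dv$. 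Then
\begin{align*}
1=\int_{\bn}|u_n|^{2^*}\dv\le\sum_j\int_{B_r(x_j)}|u_n|^{2^*}\dv\le m_n^{1-2/2^*}\sum_j\Big(\int_{B_r(x_j)}|u_n|^{2^*}\dv\Big)^{2/2^*}\le C L\, m_n^{2/N}\norm{u_n}_{H^1(\bn)}^2 .
\end{align*}
Because $\norm{\cdot}_\la$ is equivalent to the $H^1$ norm and $\norm{u_n}_\la^2\to S^\phi$, this yields $m_n\ge\delta_0>0$. So for each $n$ there is a point $y_n\in\bn$ (one of the $x_j$) with $\int_{B_r(y_n)}|u_n|^{2^*}\dv\ge\delta_0/2$.

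\textbf{Step 2 (alternative 1 of the dichotomy is impossible).} Apply \Cref{Dichotomy result} to $\{y_n\}$ and suppose alternative 1 holds. Choose $k>2/\delta_0$ and $g_1,\dots,g_k\in\Ga$ with $d_{\bn}(g_iy_n,g_jy_n)\to\infty$. For large $n$ the balls $B_r(g_iy_n)=g_iB_r(y_n)$ are pairwise disjoint. The function $|u_n|$ is $\Ga$-invariant and $O(N)$ acts by isometries, so each of these balls carries mass at least $\delta_0/2$. The total mass is then at least $k\delta_0/2>1$, which contradicts (a).

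\textbf{Step 3 (recentering).} Hence alternative 2 holds: after passing to a subsequence there are $b_n\in\mathrm{Fix}_{\bn}\Ga$ and $R>0$ with $d_{\bn}(b_n,g_ny_n)<R$ for some $g_n\in\Ga$. Again by invariance of $|u_n|$, the ball $B_r(g_ny_n)$ also has mass at least $\delta_0/2$, and it is contained in $B_{R+r}(b_n)$. Set $v_n=u_n\circ\tau_{b_n}$. By \Cref{change of variable}, and since $\tau_{b_n}$ is an isometry with $\tau_{b_n}(0)=b_n$, we get $\tau_{b_n}(B_{\tilde R})=B_{\tilde R}(b_n)$ for $\tilde R:=R+r$. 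Therefore $\int_{B_{\tilde R}}|v_n|^{2^*}\dv\ge\delta_0/2=:\delta$. By \Cref{Equivariant subspaces are closed under translations through the fixed points}, $v_n\in H^1(\bn)^\phi$.

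The main obstacle is Step 1: we need a local Sobolev constant that is uniform in the center. Homogeneity of $\bn$ via hyperbolic translations supplies this, because every $B_r(x_j)$ is isometric to $B_r$.
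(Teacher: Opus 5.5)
Your proposal is correct and follows essentially the same route as the paper: a Gromov covering with a uniform local Sobolev inequality produces a ball carrying a fixed amount of $L^{2^*}$-mass, the $\Ga$-invariance of $|u_n|$ rules out alternative 1 of the orbit dichotomy, and the hyperbolic translation $\tau_{b_n}$ recenters at a point of $\mathrm{Fix}_{\bn}\Ga$. Your $\tilde{R} = R + r$ already suffices, since $d_{\bn}(b_n, \Ga(y_n)) < R$ directly gives some $g_n$ with $d_{\bn}(b_n, g_n y_n) < R$; you also keep the $u^2$ term in the local Sobolev inequality and justify the uniform constant via homogeneity, both of which tidy up the paper's write-up.
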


\begin{proof}
	Using the coercivity of $Q_\la$, we have that $\{u_n\}$ is bounded in $H^1\(\bn\)$. Let us fix $r>0$, then from the \Cref{Gromov's covering lemma}, we can have a uniformly locally finite covering of $\bn$, i.e., we can choose points $\{x_j : j \in \N\} \subset \bn$ such that $\bn = \cup_{j \in \N} B_r(x_j)$ and there exists a constant $L_r > 0$, depending only on $r, N$ such that
	\begin{align*}
		\sum_{j \in \N} \chi_{B_r(x_j)} \leq L_r, \quad \forall x \in \bn.
	\end{align*}
	From the local Sobolev inequality, we have that
	\begin{align}
		\norm{u_n}_{L^{2^*}\(B_r(x_j)\)} \leq C_r \norm{u_n}_{H^1\(B_{r}(x_j)\)}, \label{local Sobolev inequality}
	\end{align}
	where $C_r > 0$ is a constant, independent of $j$. Now, let us define
	\begin{align*}
		a_{j,n} := \int_{B_r(x_j)} \abs{u_n}^{2^*} \dv \quad \text{and} \quad M_n := \sup_{j} a_{j,n}.
	\end{align*}
	Since the covering is uniformly locally finite, using the fact that $\{u_n\}$ is bounded in $H^1\(\bn\)$, we have
	\begin{align*}
		\sum_{j \in \N} a_{j,n}^{\f{2}{2^*}} \leq C \sum_{j \in \N} \int_{B_r(x_j)} \abs{\nabla_{\bn} u_n}^2 \dv \leq C \int_{\bn} \sum_{j \in \N} \chi_{B_r(x_j)} \abs{\nabla_{\bn} u_n}^2 \dv \leq C \norm{u_n}_\la^2 \leq C,
	\end{align*}
	where the positive constant $C$ is uniform with respect to $n$. This implies
	\begin{align*}
		&1 = \int_{\bn} \abs{u_n}^{2^*} \dv \leq \sum_{j \in \N} a_{j,n} = \sum_{j \in \N} a_{j,n}^{\f{2}{N}} a_{j,n}^{\f{2}{2^*}} \leq M_n^{\f{2}{N}} \cdot C\\
		\Rightarrow & M_n \geq \(\f{1}{C}\)^{\f{N}{2}} =: 2 \delta
	\end{align*}
	Now, for each $n$ we can find a corresponding $j_n$ such that
	\begin{align*}
		a_{j_n,n} = \int_{B_r\(x_{j_n}\)} \abs{u_n}^{2^*} \dv \geq \delta.
	\end{align*}
	Without loss of generality, let us denote the subsequence $\{x_{j_n}\}$ by $\{x_n\}$. We will also use the same notation for further subsequences.\\
	
	If possible, let Case (1) of the \Cref{Dichotomy result} hold. That is, there exists a subsequence $\{x_n\}$ such that, for every $k \in \N$, there exists a subset $\{g_1, \cdots, g_k\}$ of $\Ga$ such that
	\begin{align*}
		d_{\bn} \(g_i x_n, g_j x_n\) \rightarrow &\infty, \; \text{as} \; n \rightarrow \infty, \; \text{for} \; i \neq j.
	\end{align*}
	
	Let us fix a $k \in \N$ such that $k \delta > 1$. Then there exists $M \in \N$ such that for every $n \geq M$,
	\begin{align*}
		& d_{\bn} \(g_i x_n, g_j x_n\) > 2(r+1), \quad \text{for} \; i \neq j \; \text{and} \; i,j \in \{1, \cdots, k\}.\\
	   \text{i.e.,} \; & B_{r+1}\(g_i x_n\) \cap B_{r+1}\(g_j x_n\) = \emptyset, \quad \text{for} \; i \neq j \; \text{and} \; i,j \in \{1, \cdots, k\}
	\end{align*}
	Now, we use the \Cref{change of variable} and the fact $\abs{u_n}^{2^*}$ is $\Ga$-invariant, to get
	\begin{align*}
		1 = \int_{\bn} \abs{u_n}^{2^*} \dv & \geq \sum_{i=1}^{k} \int_{B_r(g_i x_n)} \abs{u_n}^{2^*} \dv\\
		& \geq k \int_{B_r(x_n)} \abs{u_n}^{2^*} \dv \geq k \delta > 1.
	\end{align*}
     This leads to a contradiction. Hence, from \Cref{Dichotomy result} we have a subsequence $\{b_n\} \subset \bn$ such that $\{b_n\} \subset \mathrm{Fix}_{\bn}(\Ga)$ and
	\begin{align}
		d_{\bn} \(b_n, \Ga (x_n)\) & < R, \quad \forall \, n \in \N \label{3.4}.
	\end{align}
	for some $R>0$. Define $v_n (x) := \(u_n \circ \tau_{b_n}\) (x)$, where $\tau_{b_n}$ is the hyperbolic translation. As $\{b_n\} \subset \mathrm{Fix}_{\bn}(\Ga)$, we have
	from \Cref{Equivariant subspaces are closed under translations through the fixed points} that $v_n\in H^1(\bn)^\phi$. Also, from \eqref{3.4} we have that for each $x_n$, there exists $g_n \in \Ga$ such that $ d_{\bn} \(b_n, g_n x_n\) < R + 1, \text{i.e.,} \;\; B_r\(g_nx_n\) \subset B_{R+r+1}\(b_n\).$ Let us take $\tilde{R} = R+r+1$, then
	\begin{align*}
		\int_{B_{\tilde{R}}} \abs{v_n}^{2^*} \dv = \int_{B_{\tilde{R}}(b_n)} \abs{u_n}^{2^*} \dv \geq \int_{B_r\(g_nx_n\)} \abs{u_n}^{2^*} \dv = \int_{B_r(x_n)} \abs{u_n}^{2^*} \dv \geq \delta.
	\end{align*}
	
\end{proof}

\medskip

From the \Cref{change of variable}, we have that $\{v_n\} \subset H^1\(\bn\)^\phi$ and it satisfies the following properties:
\begingroup
\makeatletter\tagsleft@true\makeatother
\begin{align}
	& \int_{\bn} \abs{v_n}^{2^*} \dv = 1, \tag{$\mathbf{M_1}$} \label{2 star norm is 1}\\
	& \int_{B_{\tilde{R}}} \abs{v_n}^{2^*} \dv \geq \delta \tag{$\mathbf{M_2}$} \label{nonvanishing property},\\
	& \norm{v_n}_\la^2 = S^\phi + o(1), \tag{$\mathbf{M_3}$} \label{minimizing sequence}\\
	& \< v_n, \varphi \>_\la - S^\phi \int_{\bn} \abs{v_n}^{2^*-2} v_n \varphi \dv = o(1) \norm{\varphi}_\la, \quad \forall \varphi \in H^1\(\bn\)^\phi. \tag{$\mathbf{M_4}$} \label{subspace PS inequality}
\end{align}

\medskip

\begin{lemma}\label{Principle of symetric criticality type result}
	Let $\{v_n\} \subset H^1\(\bn\)^\phi$ satisfy \eqref{subspace PS inequality}. Then
	\begin{align}
		\< v_n, \psi \>_\la - S^\phi \int_{\bn} \abs{v_n}^{2^*-2} v_n \psi \dv = o(1) \norm{\psi}_\la, \quad \forall \psi \in H^1\(\bn\). \tag{$\mathbf{M_4'}$} \label{fullspace PS inequality}
	\end{align}
\end{lemma}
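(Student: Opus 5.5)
The plan is to reduce the statement for arbitrary $\psi \in H^1(\bn)$ to \eqref{subspace PS inequality} applied to the single equivariant test function $\varphi := P_\phi \psi$. Here $P_\phi$ is the group-averaging projection \eqref{Projection map}. By \Cref{T is Bochner integrable} the map $g \mapsto T_g\psi$ is Bochner integrable. By \eqref{triangle inequality for Bochner integral}, together with the fact that each $T_g$ is an isometry for $\norm{\cdot}_\la$, we get $\norm{P_\phi\psi}_\la \le \norm{\psi}_\la$. So \eqref{subspace PS inequality} gives
\begin{align*}
\< v_n, P_\phi\psi \>_\la - S^\phi \int_{\bn} \abs{v_n}^{2^*-2} v_n \, P_\phi\psi \dv = o(1)\norm{\psi}_\la ,
\end{align*}
with the $o(1)$ independent of $\psi$. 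It therefore suffices to show that both terms on the left are unchanged when $P_\phi\psi$ is replaced by $\psi$.

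For the first term, fix $n$ and consider the bounded linear functional $\ell_n(w) := \< v_n, w\>_\la$ on $H^1(\bn)$. By \eqref{Dual elements commute with Bochner integration},
\begin{align*}
\ell_n(P_\phi\psi) = \int_\Ga \< v_n, T_g\psi\>_\la \,\mathrm{d}\mu(g).
\end{align*}
Since $T_g$ preserves $\<\cdot,\cdot\>_\la$ (by \Cref{change of variable}, or directly because $g \in O(N)$ is a hyperbolic isometry), and since $T_{g^{-1}} = T_g^{-1}$, we have $\< v_n, T_g\psi\>_\la = \< T_{g^{-1}} v_n, \psi\>_\la$. Because $v_n \in H^1(\bn)^\phi$ is a fixed point of the representation, $T_{g^{-1}}v_n = v_n$. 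The integrand is therefore constant in $g$, and since $\mu$ is a probability measure, $\ell_n(P_\phi\psi) = \< v_n,\psi\>_\la$.

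For the second term, apply the same argument to $m_n(w) := \int_{\bn} \abs{v_n}^{2^*-2}v_n\, w \dv$. This is a bounded functional on $H^1(\bn)$ by Hölder's inequality and the Poincaré–Sobolev inequality \eqref{Poincare-Sobolev}, since $\abs{v_n}^{2^*-2}v_n \in L^{(2^*)'}$. Then
\begin{align*}
m_n(T_g\psi) = \phi(g)\int_{\bn} \abs{v_n(x)}^{2^*-2}v_n(x)\psi(g^{-1}x)\dv .
\end{align*}
Substitute $x = gy$, which preserves $\dv$. Using $v_n(gy) = \phi(g)v_n(y)$ and $\abs{v_n(gy)} = \abs{v_n(y)}$, this becomes $\phi(g)^2 m_n(\psi) = m_n(\psi)$. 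Integrating over $\Ga$ against $\mu$ and applying \eqref{Dual elements commute with Bochner integration} again gives $m_n(P_\phi\psi) = m_n(\psi)$. Combining the two identities yields \eqref{fullspace PS inequality}.

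The main point needing care is the legitimacy of exchanging the functionals with the Bochner integral; $\mathbf{R_2}$ and \Cref{T is Bochner integrable} handle this. One also has to check that $m_n$ is genuinely continuous on $H^1(\bn)$, and that the $o(1)$ is uniform in $\psi$. The uniformity holds because the only $\psi$-dependence enters through $\norm{P_\phi\psi}_\la \le \norm{\psi}_\la$.
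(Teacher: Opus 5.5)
Your proposal is correct and follows essentially the same route as the paper: apply $(\mathbf{M_4})$ to $P_\phi\psi$, bound $\|P_\phi\psi\|_\lambda\le\|\psi\|_\lambda$ by the Bochner triangle inequality, and use $\mathbf{R_2}$ with $T_g$-invariance to show the derivative at $v_n$ takes the same value on $P_\phi\psi$ as on $\psi$. The only cosmetic difference is that you check the $T_g$-invariance of the linear and nonlinear terms separately, whereas the paper gets it in one step from the $T_g$-invariance of $\Phi(u)=\frac12\|u\|_\lambda^2-\frac{S^\phi}{2^*}\int_{\mathbb{B}^N}|u|^{2^*}$ together with $T_g v_n=v_n$.
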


\endgroup

\begin{proof}
	Define $\Phi : H^1\(\bn\) \to \R$ by
	\begin{align*}
		\Phi(u) = \f{1}{2} \norm{u}_\la^2 - \f{S^\phi}{2^*} \int_{\bn} \abs{u}^{2^*} \dv.
	\end{align*}
	Then \eqref{subspace PS inequality} implies
	\begin{align}
		\abs{\Phi'(v_n) \[\varphi\]} \leq \eps_n \norm{\varphi}_\la, \quad \forall \varphi \in H^1\(\bn\)^\phi,
	\end{align}
	where $\eps_n \to 0$ as $n \to \infty$. Let us now fix $\psi \in H^1\(\bn\)$. We prove this lemma in the following steps:\\
	
	\textbf{Step 1:} We show $\Phi'\(v_n\) \[T_g \psi\] = \Phi'\(v_n\) \[\psi\], \;\; \forall g \in \Ga$.
	
	\medskip
	Since $\{v_n\} \subset H^1\(\bn\)^\phi$, we have $T_g v_n = v_n$, for every $g \in \Ga$. Also, we observe that $\Phi$ is invariant under $T_g$. These imply
	\begin{align*}
		\Phi'\(v_n\) \[T_g \psi\] = \Phi'\(T_g v_n\) \[T_g \psi\] & = \lim\limits_{t \to 0} \f{\Phi \(T_g v_n + t T_g \psi\) - \Phi \(T_g v_n\)}{t}\\
		& = \lim\limits_{t \to 0} \f{\Phi \(T_g \(v_n + t  \psi\)\) - \Phi \(T_g v_n\)}{t}\\
		& = \lim\limits_{t \to 0} \f{\Phi \(v_n + t  \psi\) - \Phi \( v_n\)}{t}\\
		& = \Phi'\(v_n\) \[\psi\].
	\end{align*}
	
	\medskip
	
	\textbf{Step 2:} We show $\norm{P_\phi \psi}_\la \leq \norm{\psi}_\la$.
	
	\medskip
	
	 From \Cref{T is Bochner integrable}, we have that the map $g \mapsto T_g \psi$ is Bochner $\mu$-integrable and then \eqref{triangle inequality for Bochner integral} gives
	\begin{align*}
		\norm{P_\phi \psi}_\la = \norm{\int_{\Ga} T_g \psi \mathrm{~d}\mu(g)}_\la \leq \int_{\Ga} \norm{T_g \psi}_\la \mathrm{~d}\mu(g) = \int_{\Ga} \norm{ \psi}_\la \mathrm{~d}\mu(g) = \norm{ \psi}_\la. 
	\end{align*}
	
	\medskip
	
	\textbf{Step 3:} We show $\Phi'\(v_n\) \[P_\phi \psi\] = \Phi'\(v_n\) \[\psi\]$.
	
	\medskip
	
	Since $\Phi'\(v_n\) \in  \(H^1\(\bn\)\)^*$, from \eqref{Dual elements commute with Bochner integration} we observe
	\begin{align*}
		\Phi'\(v_n\) \[P_\phi \psi\] & = \Phi'\(v_n\) \[\int_{\Ga} T_g \psi \mathrm{~d}\mu(g)\]\\
		& = \int_{\Ga} \Phi'\(v_n\) \[T_g \psi \] \mathrm{~d}\mu(g)\\
		& = \int_{\Ga} \Phi'\(v_n\) \[ \psi \] \mathrm{~d}\mu(g) \quad \[\; \text{Using Step 1} \;\]\\
		& = \Phi'\(v_n\) \[\psi\].
	\end{align*}
	
	\medskip
	
	\textbf{Step 4:} We show $\abs{\Phi'(v_n) \[\psi\]} \leq \eps_n \norm{\psi}_\la, \quad \forall \psi \in H^1\(\bn\)$.
	
	\medskip
	
	Combining Step 3, \eqref{subspace PS inequality}, and Step 2 respectively, we have
	\begin{align*}
		\abs{\Phi'(v_n) \[\psi\]} = \abs{\Phi'\(v_n\) \[P_\phi \psi\]} \leq \eps_n \norm{P_\phi \psi}_\la \leq \eps_n \norm{\psi}_\la.
	\end{align*}
	This completes the proof.
\end{proof}

\begin{theorem}\label{Attainment result when fixed point is nontrivial}
		Let $\(\Ga, \phi\)$ satisfy \eqref{(A_1)}, \eqref{(A_2)} and $ \mathrm{dim}\(\mathrm{Fix}_{\rn} \Ga \) > 0$. Then $S^\phi$ is attained in $H^1\(\bn\)^\phi$.
\end{theorem}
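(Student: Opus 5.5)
We work with the recentred sequence $\{v_n\}\subset H^1(\bn)^\phi$ from the preceding lemma, which satisfies \eqref{2 star norm is 1}, \eqref{nonvanishing property}, \eqref{minimizing sequence} and, by \Cref{Principle of symetric criticality type result}, the full-space Palais--Smale estimate \eqref{fullspace PS inequality}. Since $\norm{\cdot}_\la$ is coercive, $\{v_n\}$ is bounded in $H^1(\bn)$. After passing to a subsequence we have $v_n\rightharpoonup v$ in $H^1(\bn)^\phi$ (this subspace is weakly closed), $v_n\to v$ in $L^2_{loc}$ and a.e. Passing to the limit in \eqref{fullspace PS inequality} with $\psi\in C_c^\infty(\bn)$ shows that $v$ is a weak solution of $-\De_{\bn}v-\la v=S^\phi|v|^{2^*-2}v$. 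Set $w_n:=v_n-v$. The Br\'ezis--Lieb lemma gives $1=\int|v|^{2^*}+\int|w_n|^{2^*}+o(1)$, and the Hilbert structure gives $\norm{v_n}_\la^2=\norm{v}_\la^2+\norm{w_n}_\la^2+o(1)$. Let $\beta=\int|v|^{2^*}$ and $\al=\lim\int|w_n|^{2^*}$, so that $\al+\beta=1$. Then exactly the same algebra as in \Cref{Attainment result when fixed point is trivial} shows the following: if we prove $\liminf\norm{w_n}_\la^2\ge S_\infty^\phi\al^{2/2^*}$, then \Cref{local compactness level} forces $\al=0$. In that case $v_n\to v$ in $L^{2^*}$, $\int|v|^{2^*}=1$, and weak lower semicontinuity shows that $v$ attains $S^\phi$.

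The key step is this lower bound for the defect $w_n$. First we show that $w_n$ cannot lose mass at infinity in a way that escapes this bound. We rerun the covering and dichotomy argument of the previous lemma on $w_n$. Suppose $w_n$ retains a uniform $L^{2^*}$-mass in balls $B_r(x_n)$. If $\#\Ga(x_n)$ spreads, as in case (1) of \Cref{Dichotomy result}, then $\Ga$-invariance of $|w_n|^{2^*}$ gives a contradiction. Otherwise we recentre along $b_n\in\mathrm{Fix}_{\bn}\Ga$ and obtain a nonzero weak limit of $w_n\circ\tau_{b_n}$, which is again $\phi$-equivariant by \Cref{Equivariant subspaces are closed under translations through the fixed points}.

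Two alternatives remain. The first is that the mass of $w_n$ stays in bounded regions, i.e.\ $d(b_n,0)$ bounded. Since $w_n\rightharpoonup 0$, the mass can then only concentrate at points. We handle this with \Cref{Lions} applied to $h^{(N-2)/2}w_n$ on $\Bee$, using $\Ga$-invariance of the defect measure. By \eqref{(A_2)}, atoms can only sit on $\mathrm{Fix}\,\Ga$. A localized blow-up near an atom produces a $\phi$-equivariant profile in $D^{1,2}(\rn)$, because $\Ga\subset O(N)$ fixes the atom, acts linearly on tangent directions, and the blow-up preserves $\phi$. The local Sobolev estimate then costs at least $S_\infty^\phi\nu_i^{2/2^*}$, via the cutoff decomposition $\xi^2+\zeta^2=1$ used before. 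The second alternative is that $d(b_n,0)\to\infty$ along the fixed set. The translated sequence $\tilde w_n=w_n\circ\tau_{b_n}$ still satisfies a Palais--Smale condition for the full functional, since the translation is an isometry and we have \eqref{fullspace PS inequality}. A weak limit $\tilde w\ne0$ is then a critical point in $H^1(\bn)^\phi$, and it splits off energy at least $S^\phi\|\tilde w\|_{2^*}^2$. Here we use the Struwe-type strict subadditivity $S^\phi(a^{2/2^*}+b^{2/2^*})>S^\phi(a+b)^{2/2^*}$. This contradicts minimality unless one piece carries all the mass. If it does, we replace $v$ by $\tilde w$, which only amounts to choosing the recentring differently.

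In summary, every profile costs at least $S^\phi$ or $S_\infty^\phi$ times its mass to the power $2/2^*$. Strict concavity of $t\mapsto t^{2/2^*}$ together with $S^\phi<S_\infty^\phi$ then forces the full mass into a single hyperbolic profile, and by \eqref{nonvanishing property} that profile is $v$ itself, so $v\neq0$ and $\beta=1$. The main obstacle is the blow-up at atoms on $\mathrm{Fix}\,\Ga$ far from the origin. There the local profile must be shown to lie in $D^{1,2}(\rn)^\phi$ in order to invoke $S_\infty^\phi$ rather than $S$. To handle this, I would first translate the atom to $0$ by $\tau_{-x_i}$ with $x_i\in\mathrm{Fix}\,\Ga$, which preserves equivariance, and only then apply the Case I cutoff argument.
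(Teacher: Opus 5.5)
There is a genuine gap in how you assemble the argument. The bound you call the key step, $\liminf\norm{w_n}_\la^2\ge S_\infty^\phi\al^{2/2^*}$, cannot be reached by local analysis in this case. A piece of $w_n$ that drifts to infinity along $\mathrm{Fix}_{\bn}\Ga$ as a translate of an element of $H^1(\bn)^\phi$ costs only $S^\phi$, and you in fact drop this bound in your second alternative. What you actually rely on is the closing claim that all profiles have costs that add up, with masses summing to $1$. These profiles are the weak limit, interior atoms, atoms drifting along the fixed set, and escaping hyperbolic profiles. That claim is a full Struwe-type decomposition with energy orthogonality, and it is asserted, not proved. Your conclusion $v\neq0$ rests entirely on it.

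Several steps toward it also fail as written:
\begin{itemize}
\item Nonvanishing local mass of $w_n\circ\tau_{b_n}$ does not give a nonzero weak limit, because the mass can concentrate.
\item Lions' lemma for $h^{(N-2)/2}w_n$ on $\Bee$ can produce atoms at points of $\overline{\mathrm{Fix}_{\bn}\Ga}\cap\partial\Bee$. These are not excluded by \eqref{(A_2)}, and $h$ is unbounded there.
\item Most importantly, the Case I cutoff argument discards $Q(\zeta z_n)\ge0$. That was harmless in Case I because the whole defect sat at the origin. Here it yields only $S^\phi\ge S^\phi\beta^{2/2^*}+S_\infty^\phi\nu_i^{2/2^*}$, which is no contradiction when $\beta+\nu_i<1$.
\end{itemize}

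The missing idea is to reverse the order, which removes any need to track mass at infinity. First show $v\neq0$. If $v=0$, the weak-$*$ limit of $|v_n|^{2^*}\dv$ is purely atomic, and by \eqref{nonvanishing property} it is nonzero. Its atoms lie on $\mathrm{Fix}_{\bn}\Ga$ by $\Ga$-invariance, finiteness of the total mass and \eqref{(A_2)}. You can then finish in either of two ways.
\begin{itemize}
\item The paper tests \eqref{fullspace PS inequality} with $\eta v_n$ to obtain $\al=S^\phi\beta$ for the gradient and critical defect measures. The equivariant local Sobolev inequality at an atom $x_0$ then gives $\beta_0\ge(S_\infty^\phi/S^\phi)^{N/2}>1$, which is impossible.
\item In your spirit, and without the Palais--Smale information, keep the complementary piece. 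Take cutoffs $\xi,\zeta$ radial about $x_0$, hence $\Ga$-invariant, with $\xi^2+\zeta^2=1$. Then $\zeta v_n\in H^1(\bn)^\phi$, and one gets $S^\phi\ge S_\infty^\phi\nu_0^{2/2^*}+S^\phi(1-\nu_0)^{2/2^*}$. This contradicts $S^\phi<S_\infty^\phi$.
\end{itemize}

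Once $v\neq0$, you only need $\norm{w_n}_\la^2\ge S^\phi\norm{w_n}_{2^*}^2$, which holds because $w_n\in H^1(\bn)^\phi$. Strict concavity of $m^{2/2^*}+(1-m)^{2/2^*}$ on $(0,1]$ then forces $m=1$.

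Your device of moving the atom to the origin by a hyperbolic translation through a fixed point, and then reading off $S_\infty^\phi$ in the conformal picture, is sound. It is a clean substitute for the paper's exponential-map argument.
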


\begin{proof}
	Firstly, we note that $\{v_n\} \subset H^1\(\bn\)^\phi$ is a minimizing sequence for $S^\phi$ such that it has the following properties: \eqref{2 star norm is 1}, \eqref{nonvanishing property}, \eqref{minimizing sequence}, and \eqref{fullspace PS inequality}. From \eqref{minimizing sequence}, we have that $\{v_n\}$ is bounded in $H^1\(\bn\)^\phi$, then up to a subsequence $v_n \rightharpoonup v$ in $H^1\(\bn\)^\phi$.\\
	
	If possible, let $v=0$. Then, there exist two non-negative Radon measures $\al$ and $\beta$ on $\bn$ such that after passing to a subsequence, we have the following convergences in the sense of measure:
	\begin{align*}
		& \abs{\nabla_{\bn} v_n}^2 \rightharpoonup \al, \quad \text{and} \quad \abs{v_n}^{2^*} \rightharpoonup \beta,\\
		\text{i.e.,} & \; \int_{\bn} \psi \abs{\nabla_{\bn} v_n}^2 \dv \to \int_{\bn} \psi \mathrm{~d}\al, \quad \text{and} \quad \int_{\bn} \psi \abs{v_n}^{2^*} \dv \to \int_{\bn} \psi \mathrm{~d}\beta, \;\; \forall \psi \in C_0^\infty\(\bn\).
	\end{align*}
	Let us fix $\eta \in C_c^\infty\(\bn\)$ and take $\psi_n = \eta v_n$. Since $\{v_n\}$ is bounded in $H^1\(\bn\)$, so is $\{\psi_n\}$. Then \eqref{fullspace PS inequality} implies
	\begin{align*}
		& \< v_n, \psi_n \>_\la - S^\phi \int_{\bn} \abs{v_n}^{2^*-2} v_n \psi_n \dv = o(1)\\
		\Rightarrow & \int_{\bn} \eta \abs{\nabla_{\bn} v_n}^2 \dv + \int_{\bn} v_n \< \nabla_{\bn} v_n, \nabla_{\bn} \eta \> \dv - \la \int_{\bn} \eta v_n^2 \dv - S^\phi \int_{\bn} \eta \abs{v_n}^{2^*} \dv = o(1).
	\end{align*}

	Since $v_n \rightharpoonup 0$ in $H^1\(\bn\)$, then by Rellich-Kondrachov theorem we have $v_n \to 0$ in $L^2_{loc} \(\bn\)$. Now combining this, Cauchy-Schwarz inequality, and the fact $\{v_n\}$ is bounded in $H^1\(\bn\)$, we have
	\begin{align*}
		\int_{\bn} v_n \< \nabla_{\bn} v_n, \nabla_{\bn} \eta \> \dv = o(1), \quad \text{and} \quad \int_{\bn} \eta v_n^2 \dv = o(1).
	\end{align*}
	Hence, we have
	\begin{align*}
		\int_{\bn} \eta \abs{\nabla_{\bn} v_n}^2 \dv - S^\phi \int_{\bn} \eta \abs{v_n}^{2^*} \dv = o(1).
	\end{align*}
	Now, letting $n \to \infty$ we observe
	\begin{align}
		\int_{\bn} \eta \mathrm{~d}\al = S^\phi \int_{\bn} \eta \mathrm{~d}\beta, \;\; \forall \eta \in C_c^\infty\(\bn\). \label{Measure equality in dual of compat support functions}
	\end{align}
	Note that $C_c^\infty\(\bn\)$ is dense in $C_0^\infty\(\bn\)$ with respect to the $L^\infty$-norm. Using this density, we can show that the above equality is true for every $\psi \in C_0^{\infty}\(\bn\)$. Hence,
	\begin{align}
		\al = S^\phi \beta
	\end{align}
	Now from the sharp Poincar\'e-Sobolev inequality \eqref{Poincare-Sobolev} and the fact $S_{\la, 2^*-1} = S$, when $\la = \f{N(N-2)}{4}$, we have the sharp Sobolev inequality on $\bn$:
	\begin{align}
		S \(\int_{\bn} \abs{f}^{2^*} \dv\)^{\f{2}{2^*}} \leq \int_{\bn} \( \abs{\nabla_{\bn} f}^2 - \f{N(N-2)}{4} f^2\) \dv, \quad \forall f 
		\in H^1\(\bn\). \label{Sobolev inequality on hyperbolic space}
	\end{align} 
	Let us take $f = \eta v_n$. Now, again using $v_n \to 0$ in $L^2_{loc}\(\bn\)$ the above inequality becomes
	\begin{align*}
		S \(\int_{\bn} \abs{\eta}^{2^*} \mathrm{~d}\beta\)^{\f{2}{2^*}} \leq \int_{\bn} \eta^2 \mathrm{~d}\al.
	\end{align*}
	Since $\al = S^\phi \beta$, we get
	\begin{align}
		S \(\int_{\bn} \abs{\eta}^{2^*} \mathrm{~d}\beta\)^{\f{2}{2^*}} \leq S^\phi \int_{\bn} \eta^2 \mathrm{~d}\beta, \quad \forall \eta \in C_c^\infty\(\bn\). \label{inequality with S and S^phi}
	\end{align}
	Let $E$ be a relatively compact Borel subset of $\bn$. By regularity of the Radon measure, we can approximate the function $\chi_E$ by a sequence of smooth compactly supported functions $0 \leq \eta_i \leq 1$. Each $\eta_i$ satisfies the above inequality, now passing to the limit as $i \to \infty$ in \eqref{inequality with S and S^phi} we have $S \beta \(E\)^{\f{2}{2^*}} \leq S^\phi \beta \(E\)$. This implies
	\begin{align*}
		\text{either} \; \beta (E) = 0, \; \text{or} \; \beta \(E\) \geq \(\f{S}{S^\phi}\)^{\f{N}{2}} > 0.
	\end{align*}
	Also, we have $\beta \(\bn\) \leq 1$, because $\int_{\bn} \abs{v_n}^{2^*} \dv = 1$. If possible, let $\beta$ have a positive component, which is not a Dirac mass. Then for every open set $E \subset \bn$, we will have $\beta \(E\) \geq \(\f{S}{S^\phi}\)^{\f{N}{2}}$, that will contradict the fact $\beta$ is a finite measure. Hence, $\beta$ is purely a sum of Dirac masses. Hence there exists a finite index set $\digamma$, a family $\{x_j : j \in \digamma\}$ of distinct points in $\bn$ and a family $\{\beta_j : j \in \digamma\}$ of positive numbers such that $\beta = \sum_{j \in \digamma} \beta_j \delta_{x_j}$. Furthermore, we will have $\# \digamma \leq \(\f{S^\phi}{S}\)^{\f{N}{2}}$. Similarly, for the measure $\al$ we will have $\al = \sum_{j \in \digamma} \al_j \delta_{x_j},$ where $\al_j = S^\phi \beta_j, \; \forall j \in \digamma$.\\
	
	Let $\xi \in C_c^\infty\(\bn\)$ such that $0 \leq \xi \leq 1$ and $\xi \equiv 1$ on $B_{\tilde{R}}$, then from \eqref{nonvanishing property}, we obtain
	\begin{align*}
		\int_{\bn} \xi \abs{v_n}^{2^*} \dv \geq \int_{B_{\tilde{R}}} \abs{v_n}^{2^*} \dv \geq \delta \;
		\Rightarrow \; \int_{\bn} \xi \mathrm{~d} \beta \geq \delta.
	\end{align*}
	This implies the Radon measure $\beta$ is not zero, i.e., one of $\beta_j, \; j \in \digamma$ is non-zero. Also, we have that $\abs{v_n}^{2^*}$ is $\Ga$-invariant, i.e., the measure $\abs{v_n}^{2^*} \dv$ is $\Ga$-invariant. Then we can show that $\beta$ is also $\Ga$-invariant, i.e., $\beta \(gE\) = \beta (E)$, for every Borel subset $E$ in $\bn$ and $g \in \Ga$. Now, we observe that for every $x \in \bn \setminus \mathrm{Fix}_{\bn} \Ga$, the orbit $\Ga(x)$ will have infinitely many points. This implies the points $x_j \in \mathrm{Fix}_{\bn} \Ga, \; \forall j \in \digamma$.\\
	
	Let $x_0$ be one of the points in the set $\{x_j : j \in \digamma\}$. Let us choose $\eta \in C_c^\infty \([0, \infty)\)$ such that $0 \leq \eta \leq 1$ and
	\begin{align*}
		\eta (t) = \begin{cases}
			1, \quad \text{when} \; t < \f{1}{2}\\
			0, \quad \text{when} \; t > 1.
		\end{cases}
	\end{align*}
	Now, for any $x \in \bn$ and $\eps > 0$, we define
	\begin{align*}
		\eta_\eps (x) = \eta \(\f{d_{\bn}(x,x_0)}{\eps}\).
	\end{align*} 
	Furthermore, we define functions $f_{n,\eps} : \rn \to \R$ such that
	\begin{align*}
		f_{n, \eps} (y) = \begin{cases}
			\eta_\eps \(\mathrm{exp}_{x_0} y\) v_n \(\mathrm{exp}_{x_0} y\), \quad & |y| < \eps;\\
			0, & |y| \geq \eps.
		\end{cases}
	\end{align*}
	
	\medskip
	
	\textbf{Claim:} $f_{n,\eps} \in D^{1,2}\(\rn\)^\phi$.
	
	Firstly, we recall that if $M$ is a connected, complete Riemannian manifold and $F : M \to M$ is an isometry, then
	\begin{align*}
		F \(\mathrm{exp}_x (\nu)\) = \mathrm{exp}_{F(x)} \(\mathrm{d}F_x \nu\), \label{relationship between isometry and exponential}
	\end{align*}
	for all $x \in M$ and $ \nu \in T_xM$. Note that every $g \in \Ga$ is an isometry on $\bn$, therefore the above relationship becomes
	\begin{align}
		g \(\mathrm{exp}_{x_0} \(y\)\) = \mathrm{exp}_{gx_0} \(gy\) = \mathrm{exp}_{x_0} \(gy\), \;\; \forall y \in \rn \cong T_{x_0} \bn,
	\end{align}
	since $x_0 \in \mathrm{Fix}_{\bn}\Ga$, i.e., $gx_0 = x_0$. Now, in $\{|y| < \eps\}$ we have
	\begin{align*}
		f_{n,\eps} (gy) = \eta_{\eps} \(\mathrm{exp}_{x_0} gy\) v_n \(\mathrm{exp}_{x_0} gy\) & = \eta_{\eps} \(g \(\mathrm{exp}_{x_0} y\)\) v_n \(g \(\mathrm{exp}_{x_0} y\)\)\\
		& = \eta_\eps \(\mathrm{exp}_{x_0} y\) \cdot \phi(g) v_n \(\mathrm{exp}_{x_0} y\) = \phi (g) f_{n, \eps} (y).
	\end{align*}
	Hence, $f_{n,\eps}$ is $\phi$-equivariant. Since $f_{n,\eps}$ has compact support, we have $f_{n,\eps} \in D^{1,2} \(\rn\)^\phi$. This proves the claim.\\
	
	Now, from the definition of $S_\infty^\phi$, we get
		\begin{align*}
		S_\infty^\phi \(\int_{\rn} \abs{f_{n,\eps}}^{2^*} \mathrm{~d}y\)^{\f{2}{2^*}} \leq \int_{\rn} \abs{\nabla f_{n,\eps}}^2 \mathrm{~d}y.
	\end{align*}
	From Lemma 2.24 from the book by T. Aubin \cite{Aubin_book}, we have
	\begin{align*}
		S_{\infty, \eps}^\phi \(\int_{\bn} \abs{\eta_\eps v_n}^{2^*} \dv\)^{\f{2}{2^*}} \leq \int_{\bn} \abs{\nabla_{\bn} \(\eta_{\eps} v_n\)}^2 \dv
	\end{align*}
    $S_{\infty, \eps}^\phi$ does not depend on the choice of $x_0 \in \bn$ and as $\eps \to 0$, we have $S_{\infty, \eps}^\phi \to S_\infty^\phi$. Again, using the $L^2_{loc}$ convergence of $v_n$ as before and after that taking $\eps \to 0$, we obtain 
	\begin{align*}
		S_\infty^\phi \beta_0^{\f{2}{2^*}} \leq \al_0 \;
		\Rightarrow \; S_\infty^\phi \beta_0^{\f{2}{2^*}} \leq S^\phi \beta_0 \;
		\Rightarrow \; \beta_0 \geq \(\f{S_\infty^\phi}{S^\phi}\)^{\f{N}{2}},
	\end{align*}
	where $\al_0 = \al \(\{x_0\}\)$ and $\beta_0 = \beta \(\{x_0\}\)$. Now, from the \Cref{local compactness level} we have $S^\phi < S_\infty^\phi$, i.e., we will get $\beta_0 > 1$, which is impossible. Hence $v \neq 0$.\\
	
	Let $w_n = v_n - v$ and let us denote $m := \int_{\bn} \abs{v}^{2^*} \dv \in (0,1]$. Then, from the Brezis-Lieb Lemma we have
	\begin{align}
		1 = \int_{\bn} \abs{v_n}^{2^*} \dv = m + \int_{\bn} \abs{w_n}^{2^*} \dv + o(1). \label{Brezis-Lieb}
	\end{align}
	From the weak convergence $v_n \rightharpoonup v$ in $H^1\(\bn\)$, we obtain
	\begin{align}
		\norm{v_n}_\la^2 = \norm{v}_\la^2 + \norm{w_n}_\la^2 + o(1). \label{Brezis-Lieb for norm}
	\end{align}
	Now, from the definition of $S^\phi$, we get
	\begin{align*}
		S^\phi \(\int_{\bn} \abs{v}^{2^*} \dv\)^{\f{2}{2^*}} \leq \norm{v}_\la^2, \; \text{and} \; S^\phi \(\int_{\bn} \abs{w_n}^{2^*} \dv\)^{\f{2}{2^*}} \leq \norm{w_n}_\la^2.
	\end{align*}
	Now combining these with \eqref{Brezis-Lieb} and \eqref{Brezis-Lieb for norm}, we have
	\begin{align*}
		S^\phi \geq S^\phi \(m^{\f{2}{2^*}} + (1-m)^{\f{2}{2^*}}\). 
	\end{align*}
	We observe that $m \mapsto m^{\f{2}{2^*}} + (1-m)^{\f{2}{2^*}}$ is strictly concave in $[0,1]$. This implies 
	\begin{align*}
		m^{\f{2}{2^*}} + (1-m)^{\f{2}{2^*}} > 1, \; \forall m \in (0,1).
	\end{align*}
	Hence, only possibility is $m = 1$, i.e., $\int_{\bn} \abs{v}^{2^*} \dv = 1$. Therefore, from the weak lower semi-continuity of norm $\norm{\cdot}_\la$, we conclude that $v$ is a minimizer of $S^\phi$.
	
\end{proof}

\subsection{Conclusion}

\textbf{Example of groups and corresponding $\phi$ satisfying \eqref{(A_1)} and \eqref{(A_2)}:}\\

	Let us first define a group action on $\R^4$. Let us set $z = (z_1, z_2) \in \R^4$ and for $\theta \in [0.2\pi)$, let us take the rotation matrix to be 
\begin{align*}
	R_\theta := \begin{bmatrix}
		\cos\theta & -\sin\theta\\
		\sin\theta & \cos\theta
	\end{bmatrix}
	\in SO(2),
\end{align*}
then the action on $\R^4$ is given by $r_\theta (z) := \(R_\theta z_1, R_\theta z_2\)$. Also, let us consider an action to be
\begin{align*}
	\tau \(z_1, z_2\) := \(Az_1, B z_2\), \quad \text{where} \;\; A:= \begin{bmatrix}
		-1&0\\
		0&1
	\end{bmatrix},
	\quad
	B:= \begin{bmatrix}
		1&0\\
		0&-1
	\end{bmatrix}.
\end{align*}
Let $\Sigma$ be the group generated by $\{\tau, r_\theta : \theta \in [0,2\pi)\}$ and a corresponding onto homomorphism $\phi : \Sigma \to \mathbb{Z}_2$ such that
\begin{align*}
	\phi \(r_\theta\) = 1, \quad \text{and} \quad \phi (\tau) = -1.
\end{align*}

Now fix $j \in \{1, 2, \cdots, n\}$ and identify $\rn$ to be $\(\R^4\)^j \times \R^{N-4j}$ and $x = (z_1, \cdots, z_j, y) \in \rn$. Further, set
\begin{align*}
	\Lambda_j = \begin{cases}
		O(N-4j), \quad & \text{if} \; j \neq n\\
		\{I\} & \text{if} \; j = n
	\end{cases}
\end{align*}
Define the group $\Ga_j := \Sigma^j \times \Lambda_j \subset_{\text{closed}} O(N)$ and its action of $\rn$ is given by
\begin{align}
	(\sigma_1, \cdots, \sigma_j, \eta ) (z_1, \cdots, z_j, y) := (\sigma_1 z_1, \cdots, \sigma_j z_j, \eta y), \label{Group examples}
\end{align}
where $(\sigma_1, \cdots, \sigma_j) \in \Sigma \times \cdots \times \Sigma \; (j \; \text{times})$. Moreover, $\phi_j : \Ga_j \to \mathbb{Z}_2$ is given by
\begin{align}
	\phi_j (\sigma_1, \cdots, \sigma_j, \eta ) := \phi (\sigma_1) \cdots \phi (\sigma_j). \label{Homomorhisms example}
\end{align}
It is easy to see that each $\phi_j$ is a continuous onto homomorphism. Furthermore, the conditions \eqref{(A_1)} and \eqref{(A_2)} hold. Also, we observe that
\begin{align*}
	\mathrm{Fix}_{\rn} \Ga_j = \begin{cases}
		\{0\}, \quad & \text{if} \; j \neq n,\\
		\R^{N-4j}, & \text{if} \; j = n.
	\end{cases}
\end{align*}
If $j=n$, we will have $\mathrm{Fix}_{\bn} \Ga_j = \{(x_1, \cdots, x_N) \in \bn : x_1 = \cdots = x_{4j} = 0\}$. Also, we observe that $D^{1,2}\(\rn\)^{\phi_i} \cap D^{1,2}\(\rn\)^{\phi_j} = \{0\}$, for all $i, j \in \{1, \cdots, n\}$ and $i \neq j$. For detailed proof, see Lemma 3.2 in \cite{Clapp_Rios}. Consequently, we will have $H^1\(\bn\)^{\phi_i} \cap H^1\(\bn\)^{\phi_j} = \{0\}$ and $H_0^1\(\Bee\)^{\phi_i} \cap H_0^1\(\Bee\)^{\phi_j} = \{0\}$.\\

\begin{remark}
	For a better understanding of the group actions defined in \eqref{Group examples}, we present the groups acting on $\R^{12}$ and $\R^{13}$:
	\begin{enumerate}
		\item[1.] For $R^{12}$ the groups are
		\begin{enumerate}
			\item[a.] $\Sigma \times O(8)$ acting on $\R^{12} = \R^4 \times \R^8$.
			\item[b.] $\Sigma \times \Sigma \times O(4)$ acting on $\R^{12} = \R^4 \times \R^4 \times \R^4$.
			\item[c.] $\Sigma \times \Sigma \times \Sigma$ acting on $\R^{12} = \R^4 \times \R^4 \times \R^4$.
		\end{enumerate}
		\item[2.] For $R^{13}$ the groups are
		\begin{enumerate}
			\item[a.] $\Sigma \times O(9)$ acting on $\R^{13} = \R^4 \times \R^9$.
			\item[b.] $\Sigma \times \Sigma \times O(5)$ acting on $\R^{13} = \R^4 \times \R^4 \times \R^5$.
			\item[c.] $\Sigma \times \Sigma \times \Sigma \times \{I\}$ acting on $\R^{13} = \R^4 \times \R^4 \times \R^4 \times \R$.
		\end{enumerate}
	\end{enumerate}
	
\end{remark}

\medskip

\begin{proof}[Proof of \Cref{Existence theorem}]
	Let $N = 4n +m$, where $ n \geq 1$ and $m \in \{0,1,2,3\}$. Let $\(\Ga_j, \phi_j\)$, $j \in \{1, \cdots, n\}$ be as in \eqref{Group examples} and \eqref{Homomorhisms example}. Then from the \Cref{Corollary for existence in the trivial fixed point case} and the \Cref{Attainment result when fixed point is nontrivial}, we argue that for each $j \in \{1, \cdots, n\}$, there is a minimizer $w_j \in H^1\(\bn\)^{\phi_j}$ for $S^{\phi_j}$. Now, by Lagrange's multiplier theorem, we obtain
	\begin{align*}
		\< w_j, \varphi \>_\la = S^{\phi_j} \int_{\bn} \abs{w_j}^{2^*-2} w_j \varphi \dv, \quad \forall \varphi \in H^1\(\bn\)^{\phi_j}.
	\end{align*}
	Taking the transformation $\tilde{w}_j = \(S^{\phi_j}\)^{\f{N-2}{4}} w_j$ gives
	\begin{align*}
		\< \tilde{w}_j, \varphi \>_\la = \int_{\bn} \abs{\tilde{w}_j}^{2^*-2} \tilde{w}_j \varphi \dv, \quad \forall \varphi \in H^1\(\bn\)^{\phi_j}.
	\end{align*}
	Now by the principle of symmetric criticality \cite{Kobayashi_Otani}, we get
	\begin{align*}
		\< \tilde{w}_j, \psi \>_\la = \int_{\bn} \abs{\tilde{w}_j}^{2^*-2} \tilde{w}_j \psi \dv, \quad \forall \psi \in H^1\(\bn\).
	\end{align*}
    Therefore $\tilde{w}_j \in H^1\(\bn\)^{\phi_j}$ is a nontrivial solution of \eqref{(1)}, when $p = 2^*-1$. We recall that each nontrivial function in equivariant subspaces is nonradial and sign-changing. Also, we have $H^1\(\bn\)^{\phi_i} \cap H^1\(\bn\)^{\phi_j} = \{0\}$, for all $i, j \in \{1, \cdots, n\}$ and $i \neq j$. Hence, the theorem.
\end{proof}

\section{Non-existence result for $\la > \la_1$}\label{non-existence section}
Throughout this section, we assume that $\la > \la_1, \; 1<p\leq 2^*-1$ and prove the non-existence \Cref{Non-existence theorem for higher eigenvalues}. The condition $(QOP)_{\Ga, d}$, as in \Cref{orbit packing number}, provides sufficiently many well-separated points along each $\Ga$-orbit. Combining this with the regularity of the solution, we obtain a sufficient decay estimate to conclude our result, using the spectral non-existence criterion \eqref{Marzuola and Borthwick result}. We begin with a standard regularity result as follows:\\

\begin{lemma}\label{u goes to 0}
	Let $u$ be a solution of \eqref{(1)}, with $ \la \in \R, \;1 < p \leq 2^* - 1$. Then $u, \nabla_{\bn}u \in L^\infty(\bn)$ and $u(x) \to 0$ and $\abs{\nabla_{\bn} u(x)} \to 0$ as $d_{\bn}(0,x) \to \infty$. 
\end{lemma}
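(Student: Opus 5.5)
The plan is a uniform local regularity estimate on geodesic balls of fixed radius, followed by a tail argument that uses only $u \in H^1(\bn)$. The key geometric input is that $\bn$ is homogeneous: for every $x \in \bn$, the hyperbolic translation $\tau_x$ is an isometry carrying $B_2$ onto $B_2(x)$. Hence $u_x := u \circ \tau_x$ solves the same equation \eqref{(1)} on $B_2$. In the Poincar\'e ball model, $B_2 = \Bee(0,\tanh 1)$ is a fixed Euclidean ball on which the metric coefficients $h^2\delta_{ij}$ are smooth and uniformly elliptic. So every local estimate on $B_2$ will have constants independent of $x$. By \Cref{change of variable}, $\norm{u_x}_{H^1(B_2)} = \norm{u}_{H^1(B_2(x))} \le \norm{u}_{H^1(\bn)}$.

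\textbf{Step 1 (uniform $L^\infty$ bound).} First consider $v := u_x$ on $B_2$. It solves a linear equation $-\De_{\bn} v = a(y)\, v$ with $a = \la + |v|^{p-1}$. Since $v \in L^{2^*}(B_2)$, we have $|v|^{p-1} \in L^{N/2}(B_2)$. We then run the Brezis--Kato / Moser iteration. The delicate point is the critical case $p = 2^*-1$: here $L^{N/2}$ is only borderline, and the iteration needs the $L^{N/2}$-norm of $|v|^{p-1}$ to be small on small balls. For a single $x$ this follows from absolute continuity of the integral. For uniformity in $x$ we need something stronger: that $\int_{B_r(x)} |u|^{2^*} \dv$ is small uniformly in $x$ once $r$ is small. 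I will prove this by splitting into two regions.
\begin{itemize}
\item For $d_{\bn}(0,x)$ large, $\norm{u}_{H^1(B_2(x))} \to 0$ because $u \in H^1(\bn)$ and the covering of \Cref{Gromov's covering lemma} has finite overlap. The local Sobolev inequality \eqref{local Sobolev inequality} then makes $\int_{B_2(x)} |u|^{2^*}\dv$ small.
\item For $x$ in a compact set, equi-integrability of the single function $|u|^{2^*}$ gives the smallness.
\end{itemize}
With this uniform smallness, Brezis--Kato gives $v \in L^q(B_1)$ for all $q < \infty$, with bounds depending only on $\norm{u}_{H^1}$ and the smallness scale. Standard $W^{2,q}$ estimates (or De Giorgi--Nash--Moser) then yield the local maximum-principle bound
\begin{align*}
\sup_{B_{1/2}(x)} |u| \le C\, \norm{u}_{L^{2^*}(B_1(x))}^{\theta}\big(1 + \norm{u}_{H^1(B_2(x))}\big)
\end{align*}
for some $\theta > 0$, with $C$ independent of $x$. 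In the subcritical range $p < 2^*-1$, the potential $|v|^{p-1}$ lies in $L^{s}$ with $s > N/2$, so no smallness is needed and this step is routine.

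\textbf{Step 2 (gradient bound and decay).} Once $v$ is bounded, $|v|^{p-1}v \in L^\infty(B_1)$. The interior $W^{2,q}$ estimate with $q > N$, followed by Morrey's inequality, gives
\begin{align*}
\norm{v}_{C^{1,\al}(B_{1/4})} \le C\big(\norm{v}_{L^\infty(B_{1/2})} + \norm{v}_{L^\infty(B_{1/2})}^{p}\big),
\end{align*}
again with $C$ uniform in $x$. Evaluating at $y = 0$, i.e.\ at the point $x$, and using that $\tau_x$ is an isometry gives $|u(x)| + |\nabla_{\bn} u(x)| \le C$, so $u, \nabla_{\bn} u \in L^\infty(\bn)$. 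For the decay, as $d_{\bn}(0,x) \to \infty$ we have $\norm{u}_{H^1(B_2(x))} \to 0$ by the tail argument of Step 1. The bounds in Steps 1 and 2 depend monotonically on this quantity and vanish with it, so $u(x) \to 0$ and $|\nabla_{\bn} u(x)| \to 0$.

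\textbf{Main obstacle.} The main difficulty is the critical exponent in Step 1: obtaining a Moser/Brezis--Kato bound that is uniform over all balls, which requires the uniform small-mass property of $|u|^{2^*}$ on small balls. I will secure this through the compact/tail splitting described in Step 1, and use the homogeneity of $\bn$ under $\tau_x$ to make all constants independent of the center.
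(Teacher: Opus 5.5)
Your proposal is correct and follows essentially the argument the paper has in mind: the paper only cites Br\'ezis--Kato plus Moser iteration (as in Bhakta--Sandeep), and you fill in that same scheme with constants made uniform in the center via the isometries $\tau_x$, decay coming from the vanishing of $\|u\|_{H^1(B_2(x))}$ at infinity, and an interior $W^{2,q}$/Morrey step for the gradient. One simplification: the uniform small-mass property needs no compact/tail splitting, because all geodesic balls $B_r(x)$ have the same volume, so absolute continuity of the integral of $|u|^{2^*}$ already gives $\sup_{x}\int_{B_r(x)}|u|^{2^*}\,\mathrm{d}V_{\bn}\to 0$ as $r\to 0$.
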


\medskip

The proof follows from the classical Br\'ezis--Kato \cite{Struwe} results and the Moser iteration technique. See \cite{Bhakta_Sandeep} for a proof in the context of hyperbolic space; although it is stated for $\la < \la_1$, the argument works for any $\la \in \R$. However, this does not provide a quantitative decay estimate for nontrivial solutions to \eqref{(1)}, that we can use for \eqref{Marzuola and Borthwick result}.

Now we shall describe the orbit packing number for the orbits in $\bn$. The lower bound of this number is deduced from the $(QOP)_{\Ga,d}$ condition in \Cref{orbit packing number} as follows:  

Let us recall that $x = (s, \theta)$ in the Euclidean polar coordinate, i.e., $|x| = s$ and $\theta \in \Sl^{N-1}$ and $x = (\rho, \theta)$ is in terms of the geodesic polar coordinate in $\bn$, where $\rho = d_{\bn} (0,x)$ and $s = \tanh \f{\rho}{2}$. We will fix the notation throughout.\\

\begin{definition}
	Let $\Ga \subset_{\text{closed}} O(N)$. For $x \in \bn$ and $\zeta > 0$, define
	\begin{align*}
		m_\Ga (x, \zeta) & := \sup \{m \in \N \ : \ \exists \ g_1, \cdots, g_m \in \Ga \ \text{such that} \ d_{\bn} \(g_i x, g_j x\) \geq 2\zeta, \ \text{for} \ i \neq j\}.
	\end{align*}
\end{definition}

\begin{lemma}
	Let $\Ga \subset O(N)$ satisfy the condition $(QOP)_{\Ga, d}$. Then there exists $\rho_0, C, \zeta_0 >0$ such that
	\begin{align*}
		m_\Ga (x, \zeta) \geq C \(\f{\sinh \rho}{\zeta}\)^d, 
	\end{align*}
	for every $x = \(\rho, \theta\)$ with $\rho \geq \rho_0$ and $0 < \zeta < \zeta_0$.
\end{lemma}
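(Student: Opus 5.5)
The plan is to transfer the Euclidean packing on $\Sl^{N-1}$ to the hyperbolic sphere $S_\rho$ through the explicit distance formula \eqref{hyperbolic metric}. Fix $x=(\rho,\theta)$ and points $g_i,g_j\in\Ga$. Since $\Ga\subset O(N)$, both $g_ix$ and $g_jx$ lie on the Euclidean sphere of radius $s=\tanh\f{\rho}{2}$. We have $g_ix=s\,g_i\theta$, so
\begin{align*}
\cosh d_{\bn}(g_ix,g_jx)=1+\f{2s^2\abs{g_i\theta-g_j\theta}^2}{(1-s^2)^2}=1+\f{\sinh^2\rho}{2}\abs{g_i\theta-g_j\theta}^2,
\end{align*}
where we use $\f{2s}{1-s^2}=\sinh\rho$. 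Thus the hyperbolic separation is an explicit increasing function of the chordal separation $\abs{g_i\theta-g_j\theta}$ on the unit sphere.

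Next we choose the chordal threshold. The requirement $d_{\bn}(g_ix,g_jx)\geq 2\zeta$ is equivalent to
\begin{align*}
\abs{g_i\theta-g_j\theta}\geq \eta(\rho,\zeta):=\f{\sqrt{2(\cosh 2\zeta-1)}}{\sinh\rho}=\f{2\sinh\zeta}{\sinh\rho}.
\end{align*}
By the definitions, this gives $m_\Ga(x,\zeta)= M_\Ga\(\theta,\f{2\sinh\zeta}{\sinh\rho}\)$, or at least the inequality $\geq$, which is all we need.

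We then apply $(QOP)_{\Ga,d}$. Take $\zeta_0=1$, so that $\sinh\zeta\leq C_1\zeta$ for $\zeta<\zeta_0$ with $C_1=\sinh 1$. Take $\rho_0$ with $\f{2\sinh 1}{\sinh\rho_0}<\eta_0$. Then for $\rho\geq\rho_0$ and $0<\zeta<\zeta_0$ we have $\eta=\f{2\sinh\zeta}{\sinh\rho}<\eta_0$. The condition $(QOP)_{\Ga,d}$ gives
\begin{align*}
m_\Ga(x,\zeta)\geq C_0\eta^{-d}=C_0\(\f{\sinh\rho}{2\sinh\zeta}\)^d\geq C_0(2C_1)^{-d}\(\f{\sinh\rho}{\zeta}\)^d,
\end{align*}
which is the claim with $C=C_0(2C_1)^{-d}$.

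The only delicate point is the algebra identifying $\f{2s}{1-s^2}$ with $\sinh\rho$ and the exact conversion $\cosh 2\zeta-1=2\sinh^2\zeta$. Everything else is a monotone change of threshold. If $M_\Ga$ is infinite, which can happen when $\Ga$ has infinite orbits, the inequality is trivial. Uniformity in $\theta$ comes directly from the uniformity built into $(QOP)_{\Ga,d}$.
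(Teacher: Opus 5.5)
Your proof is correct and follows essentially the same route as the paper. Both arguments use the distance formula to turn hyperbolic separation of orbit points on the geodesic sphere $S_\rho$ into chordal separation on $\Sl^{N-1}$. Both then take the threshold $\eta = \f{2\sinh\zeta}{\sinh\rho}$ and apply $(QOP)_{\Ga,d}$ together with $\sinh\zeta \leq C\zeta$ for small $\zeta$.

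Your version works with $\cosh d_{\bn}$ directly rather than the paper's $2\sinh^{-1}$ form, and it fixes $\zeta_0 = 1$ and $C_1 = \sinh 1$ before choosing $\rho_0$. This makes the choice of constants more explicit than the paper's, but it is the same argument.
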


\begin{proof}
	Suppose $\theta_1, \cdots, \theta_m \in \Ga \theta$ such that $\abs{\theta_i - \theta_j} \geq \eta$, for all $i,j \in \{1, \cdots, m\}$ and $i \neq j$. Let us define the corresponding points on the orbit $\Ga(x)$ to be $x_i = (s, \theta_i)$ for $i \in \{1, \cdots, m\}$. Now for $i \neq j$
	\begin{align*}
		\abs{x_i - x_j} = s \abs{\theta_i - \theta_j} \geq s \eta.
	\end{align*}
	From \eqref{hyperbolic metric}, we have
	\begin{align*}
		d_{\bn} (x_i, x_j) & = \cosh^{-1} \( 1+ \f{2\abs{x_i - x_j}^2}{(1-s^2)^2}\)\\
		& = 2 \sinh^{-1} \(\f{\abs{x_i - x_j}}{1-s^2}\) \qquad \[\; \cosh \(2 \sinh^{-1} a\) = 1 + 2a^2 \; \]\\
		& \geq 2 \sinh^{-1} \(\f{s\eta}{1-s^2}\) = 2 \sinh^{-1} \(\f{\eta}{2} \sinh \rho\).
	\end{align*}
	Since $\Ga$ satisfies the condition $(QOP)_{\Ga, d}$, there exist constants $C_0 >0, \ \eta_0 > 0$ such that
	\begin{align*}
		M_\Ga (\theta, \eta) \geq C_0 \eta^{-d}, \;\; \forall \eta \in (0,\eta_0).
	\end{align*}
	Let us fix $\rho_0 > 0$ very large. Then there exists $\zeta_0 > 0$ depending on $\eta_0$ and a constant $C>0$ such that
$ \f{2 \sinh \zeta}{\sinh \rho} < \eta_0, \;\; \forall \rho > \rho_0, \ \text{and} \ \zeta \in  (0, \zeta_0),$ and $\sinh \zeta \leq C \zeta, \;\; \forall \zeta \in  (0, \zeta_0).$ Now, take $\eta = \f{2 \sinh \zeta}{\sinh \rho}$, then $d_{\bn} (x_i, x_j) \geq 2 \zeta.$ Therefore for $x = (\rho, \theta)$, with $\rho > \rho_0$ we have
	\begin{align*}
		m_\Ga \(x, \zeta\) \geq M_{\Ga} \(\theta, \eta\) \geq C_0 \(\f{\sinh \rho}{2 \sinh \zeta}\)^d \geq C \(\f{\sinh \rho}{\zeta}\)^d, \;\; \forall  \zeta \in  (0, \zeta_0).
	\end{align*}
\end{proof}

\begin{remark}
	The condition $(QOP)_{\Ga, d}$ is closely related to the orbit expansion arising in compactness results for invariant Sobolev spaces; see \cite{Farkas_Kristaly_Mester} and \cite{Manna_Manna1}. In these articles, the geometric requirement for the compact embedding is qualitative, i.e., for a fixed radius $\zeta > 0$, the maximal number $m_{\Ga}(x,\zeta)$ of mutually disjoint geodesic balls of radius $\zeta$, with centers at the orbit $\Ga x$ is required to satisfy:
	\begin{align*}
		m_{\Ga} (x, \zeta) \to \infty, \quad \text{as} \; d_{\bn}(0,x) \to \infty.
	\end{align*}
	The condition $(QOP)_{\Ga, d}$ contains additional quantitative information. By the previous lemma, we have
	\begin{align*}
		m_\Ga (x, \zeta) \geq C \(\f{\sinh \rho}{\zeta}\)^d, \quad x = (\rho, \theta),
	\end{align*}
	for $\rho$ sufficiently large and $\zeta >0$ sufficiently small. Thus, for every fixed $\zeta >0$,
	\begin{align*}
		m_\Ga (x, \zeta) \gtrsim \(\sinh \rho\)^d \approx e^{\rho d}, \quad \text{as} \; \rho \to \infty.
	\end{align*}
	The parameter $d$ measures the rate of orbit expansion at infinity of hyperbolic space.
\end{remark}

\vspace*{5mm}

\begin{lemma}\label{Decay for solutions}
	Let $u$ solve \eqref{(1)} and $\abs{u}$ is $\Ga$-invariant, where $\Ga \subset O(N)$ satisfies the condition $(QOP)_{\Ga, d}$ for some $d>0$. Then 
	\begin{align*}
		|u(x)| = \abs{u(\rho, \theta)} \leq C \( \sinh \rho\)^{- \f{d}{N-d+2}}, \qquad |x| \nearrow 1. 
	\end{align*}
\end{lemma}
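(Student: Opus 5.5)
The plan is to combine three ingredients: a uniform Lipschitz bound for $u$ (from \Cref{u goes to 0}), the $\Gamma$-invariance of $|u|$, and the lower bound on the orbit packing number $m_\Ga(x,\zeta)$ from the preceding lemma. The finiteness of $\int_{\bn}|u|^{2}\dv$ will be played off against the many disjoint balls along an orbit. I would use $L^2$ rather than $L^{2^*}$, since the exponent $\frac{d}{N-d+2}$ matches the computation with $L^2$ and a ball volume $\sim \zeta^N$.

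First, \Cref{u goes to 0} gives $L:=\|\nabla_{\bn}u\|_{L^\infty}<\infty$. Fix $x=(\rho,\theta)$ with $\rho\ge\rho_0$ and set $a:=|u(x)|$, which we may assume positive. For $y\in B_\zeta(x)$ the mean value inequality along geodesics gives $|u(y)|\ge a-L\zeta$. Choose $\zeta=\frac{a}{2L}$, so that $|u|\ge a/2$ on $B_\zeta(x)$. Since $u(x)\to0$, this $\zeta$ is smaller than $\zeta_0$ once $\rho$ is large, so the previous lemma applies. By invariance of $|u|$ and since each $g$ is an isometry, $|u|\ge a/2$ on every ball $B_\zeta(g_ix)$.

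Next, take $g_1,\dots,g_m$ with $m=m_\Ga(x,\zeta)$ (or any finite admissible number if the supremum is infinite) and $d_{\bn}(g_ix,g_jx)\ge2\zeta$. The open balls $B_\zeta(g_ix)$ are then pairwise disjoint. Since $\zeta$ is small, the hyperbolic volume satisfies $|B_\zeta|\ge c\,\zeta^N$ uniformly in the center, because the volume of a ball is independent of its center. Hence
\[
\|u\|_{L^2(\bn)}^2\ \ge\ m\,\frac{a^2}{4}\,c\,\zeta^N\ \ge\ C\Big(\frac{\sinh\rho}{\zeta}\Big)^{d}a^2\zeta^N = C(\sinh\rho)^d\,a^2\zeta^{N-d}.
\]
Substituting $\zeta=a/(2L)$ yields $\|u\|_{L^2}^2\ge C'(\sinh\rho)^d a^{N-d+2}$. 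Therefore $a\le C(\sinh\rho)^{-\frac{d}{N-d+2}}$, which is the claim. Note that $N-d+2>0$ because $d\le N-1$.

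The main obstacle is the bookkeeping needed so that the choice $\zeta=a/(2L)$ is admissible, namely $\zeta<\zeta_0$ and $\rho\ge\rho_0$. This follows from $a\to0$ as $\rho\to\infty$. One can also simply use $\zeta=\min\{a/(2L),\zeta_0/2\}$: in the branch $\zeta=\zeta_0/2$ we get $a\ge L\zeta_0$, and then the bound gives $a\lesssim(\sinh\rho)^{-d/2}$, which is even stronger for large $\rho$ (or contradicts $a\to0$). A second point is that the volume lower bound $c\zeta^N$ holds uniformly for $\zeta<\zeta_0$, which follows from homogeneity of $\bn$.
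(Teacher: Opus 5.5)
Your proposal is correct and follows essentially the same argument as the paper. The paper also uses the Lipschitz bound from the regularity lemma to get $|u|\ge a/2$ on a ball of radius comparable to $a/L$; it takes $\zeta=a/(4L)$ where you take $a/(2L)$. It then uses $\Gamma$-invariance of $|u|$ to place this lower bound on $m_\Gamma(x,\zeta)\gtrsim(\sinh\rho/\zeta)^d$ disjoint balls along the orbit, and compares with $\|u\|_{L^2}^2$ via $\mathrm{Vol}(B_\zeta)\gtrsim\zeta^N$ to obtain $a^{N-d+2}(\sinh\rho)^d\lesssim 1$. Your extra bookkeeping for admissibility, including the fallback $\zeta=\min\{a/(2L),\zeta_0/2\}$, is a slightly more careful version of the paper's ``choose $x$ large enough'' step.
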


\begin{proof}
	Since $u$ is a solution to \eqref{(1)}, from \Cref{u goes to 0} we have $u, \nabla_{\bn} u \in L^{\infty}\(\bn\)$. From the elliptic regularity, we have $u \in C^{1,\al} \(\bn\), \ \al > 0$. For a point $x = (\rho, \theta) \in \bn$, we denote
	\begin{align*}
		a := \abs{u(x)}, \quad \text{and} \; L := \norm{\nabla_{\bn}u}_{L^{\infty}\(\bn\)}.
	\end{align*}
	If $y$ is close to $x$, then from the mean value theorem we have 
	\begin{align*}
		\abs{u(y) - u(x)} \leq L \ d_{\bn}(x,y).
	\end{align*}
	Let us denote $\zeta := \f{a}{4L}$. Now, we can choose $x$ large enough, i.e., $\zeta$ small enough so that 
	\begin{align*}
		y \in B_{\zeta}(x) & \Rightarrow \ \abs{u(y) - u(x)} \leq \f{a}{4} \Rightarrow \ |u(y)| \geq |u(x)| - \f{a}{4} = \f{3a}{4} \geq \f{a}{2}.
	\end{align*}
	By the invariance, the same lower bound holds on all balls of radius $\zeta$ centered at points of the orbit $\Ga x$. Now, we can invoke the previous Lemma. Let us choose $\zeta < \zeta_0$ small, then
	\begin{align*}
		m_{\Ga} \(x, \zeta\) \geq C \(\f{\sinh \rho}{\zeta}\)^d.
	\end{align*}
	Let us recall that there are $m_{\Ga} \(x, \zeta\)$ many mutually disjoint geodesic balls of radius $\zeta$, all centered on $\Ga x$, and on each of these balls we have $|u| \geq \f{a}{2}$. Therefore
	\begin{align*}
		& \int_{\bn} |u|^2 \dv \geq m_{\Ga} \(x, \zeta\) \cdot a^2 \ \mathrm{Vol}_{\bn} \(B_\zeta\) \geq C \(\f{\sinh \rho}{\zeta}\)^d \cdot a^2 \zeta^N = C \ (\sinh \rho)^d \ a^{N-d+2}\\
		\Rightarrow & \ |u(x)| = a \leq C \(\sinh \rho\)^{-\f{d}{N-d+2}}.
	\end{align*}
	This completes the proof.
\end{proof}

\vspace*{2mm}

\begin{proof}[Proof of \Cref{Non-existence theorem for higher eigenvalues}]
	From the \Cref{u goes to 0}, we have $V \in L^{\infty}(\bn)$ and the previous lemma gives
	\begin{align*}
		\abs{V} = |u(x)|^{p-1} \leq C \(\sinh \rho\)^{-\f{d(p-1)}{N-d+2}} = o\(\rho^{-1}\), \;\; \text{as} \; |x| \nearrow 1.
	\end{align*}
	Hence, \eqref{Marzuola and Borthwick result} concludes the proof.
\end{proof}

\subsection{Example of groups satisfying $(QOP)_{\Ga, d}$}

Here we give a few examples of closed subgroups of $O(N)$ that satisfy or do not satisfy the condition $(QOP)_{\Ga, d}$.\\

\begin{proposition}
	 $\Ga = O(N)$ satisfies $(QOP)_{\Ga, N-1}$.
\end{proposition}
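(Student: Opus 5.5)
Since $\Ga = O(N)$ acts transitively on $\Sl^{N-1}$, the orbit of any $\theta \in \Sl^{N-1}$ is the whole sphere: for every $\omega \in \Sl^{N-1}$ there is $g \in O(N)$ with $g\theta = \omega$. Hence $M_{O(N)}(\theta,\eta)$ does not depend on $\theta$. It equals the maximal cardinality of an $\eta$-separated subset of $\Sl^{N-1}$ in the Euclidean (chordal) distance, i.e.\ the $\eta$-packing number of the sphere. The proposition therefore reduces to the standard lower bound for this packing number:
\begin{align*}
	P(\eta) \geq C_0 \eta^{-(N-1)}, \quad 0<\eta<\eta_0 .
\end{align*}

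To prove this bound I would use the usual maximal-packing/covering argument with the surface measure $\sigma$ on $\Sl^{N-1}$. Fix $0<\eta<\eta_0:=1$ and take a maximal $\eta$-separated family $\{\omega_1,\dots,\omega_m\}\subset \Sl^{N-1}$. Such a family is finite, since the sphere is compact and the caps of chordal radius $\eta/2$ around the points are pairwise disjoint. By maximality, every $\omega \in \Sl^{N-1}$ satisfies $|\omega-\omega_i|<\eta$ for some $i$, so the caps $\mathcal{C}(\omega_i,\eta) := \{\omega : |\omega-\omega_i|<\eta\}$ cover the sphere. This gives
\begin{align*}
	\sigma(\Sl^{N-1}) \leq m \, \sup_i \sigma\big(\mathcal{C}(\omega_i,\eta)\big) \leq m\, c_N \eta^{N-1}.
\end{align*}
Rearranging yields $m \geq C_0\eta^{-(N-1)}$ with $C_0=\sigma(\Sl^{N-1})/c_N$. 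Because the points lie in the single orbit $O(N)\theta$, we can write $\omega_i = g_i\theta$ with $g_i \in O(N)$, and so $M_{O(N)}(\theta,\eta)\ge m$.

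The only point requiring care, and the main obstacle, is the cap-measure estimate $\sigma(\mathcal{C}(\omega,\eta)) \le c_N \eta^{N-1}$, which must hold uniformly in $\omega$ and for all small $\eta$. Rotation invariance of $\sigma$ makes the estimate independent of $\omega$, so we may take $\omega$ to be the north pole. In spherical coordinates, the chordal distance $|\omega-\omega'| = 2\sin(\varphi/2)$ is comparable to the angle $\varphi$ when $\varphi\le \pi$. The cap then has measure
\begin{align*}
	\sigma\big(\mathcal{C}(\omega,\eta)\big) = |\Sl^{N-2}|\int_0^{\varphi_\eta}\sin^{N-2}\varphi\,\mathrm{d}\varphi \le |\Sl^{N-2}|\,\frac{\varphi_\eta^{N-1}}{N-1},
\end{align*}
where $\varphi_\eta = 2\arcsin(\eta/2)\le \pi\eta/2$. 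This gives the required $c_N$. Finally, $d=N-1$ lies in the admissible range $(0,N-1]$, which completes the verification of $(QOP)_{O(N),N-1}$.
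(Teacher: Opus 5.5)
Your proposal is correct and follows essentially the same route as the paper: you take a maximal $\eta$-separated family in the orbit $O(N)\theta=\mathbb{S}^{N-1}$, use maximality to cover the sphere by caps of chordal radius $\eta$, and compare surface measures to get $M_{O(N)}(\theta,\eta)\geq C_0\eta^{-(N-1)}$ for $0<\eta<1$. The only difference is that you explicitly verify the cap bound $\sigma(\mathcal{C}(\omega,\eta))\leq c_N\eta^{N-1}$ via spherical coordinates, which the paper simply asserts here and carries out only in the next proposition.
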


\begin{proof}
	In this case, for any $\theta \in \Sl^{N-1}$, the orbit $\Ga \theta = \Sl^{N-1}$. Let us take $\eta_0 = 1$ and $\eta \in (0,\eta_0)$. Let $K$ be the maximal number of points $\theta_1, \cdots, \theta_K \in \Sl^{N-1}$ such that 
	\begin{align*}
		\abs{\theta_i - \theta_j} \geq \eta, \quad \forall i, j \in \{1, \cdots, K\} \; \text{and}, \; i \neq j.
	\end{align*} 
	Therefore, $B_S (\theta_i, \eta) := B \(\theta_i, \eta\) \cap \Sl^{N-1}, \ i \in \{1, \cdots, K\}$ covers the entire $\Sl^{N-1}$. Let us denote $\sigma$ to be the surface measure, then
	\begin{align*}
		& \sigma \(\Sl^{N-1}\) \leq \sum_{i=1}^{K} \sigma \(B_S (\theta_i, \eta)\) \leq K \(C_N \eta^{N-1}\) \;
		\Rightarrow \; K \geq \f{\sigma \(\Sl^{N-1}\)}{C_N} \eta^{-(N-1)}.
	\end{align*} 
	Hence, the proof.
\end{proof}

\begin{proposition}
	Let $\rn = \R^{k_1} \times \R^{k_2} \times \cdots \times \R^{k_l}, \; \Ga = O(k_1) \times O(k_2) \times \cdots \times O(k_l)$ and $k_* = \min_{1 \leq i \leq l} k_i \geq 2$, then $\Ga$ satisfies $(QOP)_{\Ga, k_*-1}$.
\end{proposition}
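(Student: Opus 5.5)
Write $\theta=(\theta^{(1)},\dots,\theta^{(l)})$ with $\theta^{(i)}\in\R^{k_i}$, so that $\sum_i|\theta^{(i)}|^2=1$. The orbit of $\theta$ under $\Ga=O(k_1)\times\cdots\times O(k_l)$ is the product of spheres
\begin{align*}
	\Ga\theta=\prod_{i=1}^{l} S^{k_i-1}\(0,r_i\), \qquad r_i:=|\theta^{(i)}|.
\end{align*}
Since $\sum_i r_i^2=1$, there is an index $i_0$ with $r_{i_0}\geq l^{-1/2}$. The plan is to work on that single block only. We vary only the $i_0$-th component by elements of $O(k_{i_0})$ and keep the other components fixed, using the identity there.

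For $g=(I,\dots,\sigma,\dots,I)$ and $g'=(I,\dots,\sigma',\dots,I)$ we get $|g\theta-g'\theta|=|\sigma\theta^{(i_0)}-\sigma'\theta^{(i_0)}|$. Hence
\begin{align*}
	M_\Ga(\theta,\eta)\geq K_{k_{i_0}}\(r_{i_0},\eta\),
\end{align*}
where $K_k(r,\eta)$ denotes the maximal number of $\eta$-separated points on the sphere $S^{k-1}(0,r)\subset\R^k$. By scaling, $K_k(r,\eta)=K_k(1,\eta/r)$. We then apply the argument of the previous proposition (the case $\Ga=O(N)$) in dimension $k=k_{i_0}\geq 2$. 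A maximal $\eta'$-separated family on $\Sl^{k-1}$ is $\eta'$-dense, so the caps of radius $\eta'$ around its points cover $\Sl^{k-1}$. This gives $K_k(1,\eta')\geq c_k\,\eta'^{-(k-1)}$ for $\eta'<1$.

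With $\eta'=\eta/r_{i_0}\leq \sqrt{l}\,\eta$, which is valid for $\eta<\eta_0:=l^{-1/2}$, we obtain
\begin{align*}
	M_\Ga(\theta,\eta)\geq c_{k_{i_0}}\(\f{r_{i_0}}{\eta}\)^{k_{i_0}-1}\geq c_{k_{i_0}}\, l^{-\f{k_{i_0}-1}{2}}\,\eta^{-(k_{i_0}-1)} .
\end{align*}
Since $\eta<\eta_0\leq 1$ and $k_{i_0}-1\geq k_*-1$, we have $\eta^{-(k_{i_0}-1)}\geq \eta^{-(k_*-1)}$. Taking $C_0$ to be the minimum of the finitely many constants $c_{k_i}l^{-(k_i-1)/2}$ then gives $M_\Ga(\theta,\eta)\geq C_0\eta^{-(k_*-1)}$, uniformly in $\theta$. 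Finally, $d=k_*-1\in(0,N-1]$ because $k_*\geq 2$, so $\Ga$ satisfies $(QOP)_{\Ga,k_*-1}$.

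The only delicate point is uniformity in $\theta$. If all the mass of $\theta$ were concentrated in a block where $r_i$ is small, the orbit would shrink. The pigeonhole choice $r_{i_0}\geq l^{-1/2}$ avoids this. The constant in the cap-measure bound $\sigma(B_S(\theta_i,\eta'))\leq C_k\eta'^{k-1}$ is standard.
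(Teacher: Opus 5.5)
Your proof is correct and follows essentially the same route as the paper. Like the paper, you pick a block $i_0$ with $|\theta^{(i_0)}|\geq l^{-1/2}$, vary only that block, and count $\eta$-separated points on the resulting sphere by the maximal-separated-set and cap-covering argument; the only difference is that the paper packs a $(k_*-1)$-dimensional sphere of radius $|\theta^{(i_0)}|$ inside the orbit directly, whereas you pack the full $(k_{i_0}-1)$-sphere and then lower the exponent using $\eta<1$, taking the minimum of the finitely many block constants, which is equally valid and makes the uniformity in $\theta$ explicit.
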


\begin{proof}
	Let $a \in (0,\infty)$ and fix $\eta \in (0,a]$. Let us choose a maximal family of points $\xi_1, \cdots, \xi_M \in  \Sl_a^{k_*-1}$ such that $\abs{\xi_i - \xi_j} \geq \eta$, for $i \neq j$. From the maximality of the set, it is clear that for every $\xi \in  \Sl_a^{k_*-1}$, there exists $i \in \{1, \cdots, M\}$, such that $\abs{\xi - \xi_i} < \eta$. Therefore the sets
	\begin{align*}
		U_i := \{\xi \in \Sl_a^{k_*-1} \; : \; \abs{\xi - \xi_i} < \eta\}, \quad i \in \{1, \cdots, M\}
	\end{align*}
	cover the sphere $\Sl_a^{k_*-1}$. This implies
	\begin{align}
		\mathcal{H}^{k_*-1} \( \Sl_a^{k_*-1} \) \leq \sum_{i=1}^{M} \mathcal{H}^{k_*-1} \( U_i \). \label{cap estimate}
	\end{align}
	
	\textbf{Claim:} $ \mathcal{H}^{k_*-1} \( U_i \) \leq C \eta^{k_*-1}$, for some constant $C \equiv C(k_*)$.\\
	
	Since the sphere is rotationally symmetric, it is enough to estimate the cap centered at $\xi_0 := a e_{k_*}$, where $e_{k_*} = \(0, \cdots, 0, 1, 0, \cdots, 0\)$. Consider the set
	\begin{align*}
		U := \{\xi \in \Sl_a^{k_*-1} \; : \; \abs{\xi - \xi_0} < \eta\}.
	\end{align*}
	Let us write $\xi = a \omega$, where $\omega \in \Sl^{k_*-1}$ and $\theta \in [0,\pi]$ be the angle between $\omega$ and $e_{k_*}$. Then
	\begin{align*}
		 \abs{\xi - \xi_0} = a \abs{\omega - e_{k_*}} = 2a \sin \f{\theta}{2} < \eta \;
		\Rightarrow \; \sin \f{\theta}{2} < \f{\eta}{2a} \;
		\Rightarrow \; \theta < 2 \sin^{-1} \(\f{\eta}{2a}\).
	\end{align*}  
	For $ t \in [0, \f{1}{2}]$, we observe $ \sin^{-1} t \leq 2t$. Since $\f{\eta}{2a} \leq \f{1}{2}$, we have $\theta < \f{2\eta}{a}$. Hence
	\begin{align*}
		U \subset V := \{\xi = a \omega \; : \; \mathrm{angle} (\omega, e_{k_*}) < \f{2\eta}{a}\}
	\end{align*}
	This implies
	\begin{align*}
		\mathcal{H}^{k_*-1} \(V\) & = a^{k_*-1} \sigma_{k_*-2} \int_{0}^{\f{2\eta}{a}} \(\sin s\)^{k_*-2} \mathrm{d}s\\
		& \leq a^{k_*-1} \sigma_{k_*-2} \int_{0}^{\f{2\eta}{a}} s^{k_*-2} \mathrm{d}s\\
		& = a^{k_*-1} \sigma_{k_*-2} \f{1}{k_*-1} \(\f{2\eta}{a}\)^{k_*-1} = \f{2^{k_*-1} \sigma_{k_*-2}}{k_*-1} \eta^{k_*-1},
	\end{align*}
	where $\sigma_{k_*-2}$ is the surface measure of $\Sl^{k_*-2}$. For every $i \in \{1, \cdots, M\}$, we obtain
	\begin{align*}
		\mathcal{H}^{k_*-1} (U_i) \leq \mathcal{H}^{k_*-1} (V) \leq C \eta^{k_*-1},
	\end{align*}
	where $C$ is a positive constant, depending only on $k_*$. This completes the proof of the claim. Now from the \eqref{cap estimate}, we have
	\begin{align}
		 \mathcal{H}^{k_*-1} \( \Sl_a^{k_*-1} \) =\sigma_{k_*-1} a^{k_*-1} \leq M \(C \eta^{k_*-1}\) \;
		\Rightarrow \; M \geq C \(\f{a}{\eta}\)^{k_*-1}. \label{Lower bound for the minimal sphere}
	\end{align}
	
	Let us write $\theta \in \Sl^{N-1}$ to be
	\begin{align*}
		\theta = \(\theta_1, \cdots, \theta_l\), \; \; \text{where} \; \theta_i \in \R^{k_i}, \; \text{and} \; \sum_{i=1}^{l} \abs{\theta_i}^2 = 1.
	\end{align*}
	There exists $j \in \{1, \cdots, l\}$ such that $\abs{\theta_j} \geq \f{1}{\sqrt{l}}$. Also, we have $k_j \geq k_*$. Now, we observe that $\Sl_{\abs{\theta_{j}}}^{k_*-1} \subset \Ga \theta$, therefore by taking $a = \abs{\theta_j}$ in \eqref{Lower bound for the minimal sphere}, we have
	\begin{align*}
		M_\Ga \(\theta, \eta\) \geq M \geq C \(\f{\abs{\theta_j}}{\eta}\)^{k_*-1} \geq C \(\f{1}{\sqrt{l} \eta}\)^{k_*-1} \geq C \(\f{1}{\eta}\)^{K_*-1}, \quad \forall \eta < \f{1}{\sqrt{l}}.
	\end{align*}
	We note that the constant $C$ is independent of $\eta$. Hence, $\Ga$ satisfies $(QOP)_{\Ga, d}$, with $d = k_*-1$ and $\eta_0 = \f{1}{\sqrt{l}}$. 
\end{proof}

\begin{remark}
	The same condition holds for the group $\Ga = SO(k_1) \times SO(k_2) \times \cdots \times SO(k_l)$.
\end{remark}

\medskip

\begin{remark}
	If there exists some $\theta \in \Sl^{N-1}$, for which the orbit $\Ga \theta$ is of finite cardinality, then the group $\Ga$ does not satisfy the condition $(QOP)_{\Ga, d}$, for any $d>0$. For example take the action of $O(2) \times O(2) \times \mathbb{Z}_2$ or $O(2) \times O(2) \times Id$ on $\R^5$.
\end{remark}

\section{Appendix}\label{Appendix}

\subsection{Strong continuity of the group action}\label{Appendix1}

\begin{proof}[Proof of \Cref{Representation}]
	It is evident that $T$ is a group homomorphism. Now, to prove $T$ is strongly continuous, let us take a sequence $\{g_n\} \subset \Ga$ such that $g_n \to I$ as $n \to \infty$.\\
	
	\medskip
	
	\textbf{Claim:} For every $u \in H^1\(\bn\), \; g_n \to I \; \Rightarrow T_{g_n} u \to u$ in $H^1\(\bn\)$.\\
	
	Using the facts that $C_c^\infty \(\bn\)$ is dense in $H^1\(\bn\)$ and for every $g \in \Ga$, $T_g$ is an isometry on $H^1\(\bn\)$, it is enough to prove the above convergence when $u \in C_c^\infty(\bn)$. Let us now fix a $C_c^\infty$-function $u$. Since $\phi : \Ga \to \mathbb{Z}_2$ is continuous, $\phi^{-1}\(\{1\}\)$ is an open neighborhood of $I$. Therefore, without loss of generality, we can assume that for every $n$,
	\begin{align}
		\phi\(g_n\) = 1.
	\end{align}
	Consequently, we have
	\begin{align*}
		T_{g_n} u(x) = u \(g_n^{-1}(x)\).
	\end{align*}
	Let us suppose $\mathrm{Supp} (u) \subset B_R = B\(0, \tanh \f{R}{2}\)$. Since the action of $O(N)$ preserves the Euclidean distance, we have
	\begin{align*}
		\mathrm{Supp} \(u \circ g_n^{-1}\) \subset B\(0, \tanh \f{R}{2}\), \; \text{i.e.,} \; \mathrm{Supp} \(u \circ g_n^{-1} - u\) \subset B\(0, \tanh \f{R}{2}\).
	\end{align*}
	Using the fact $g_n^{-1} \to I$, we have
	\begin{align*}
		\sup_{x \in B\(0, \tanh \f{R}{2}\)} \abs{g_n^{-1} x - x} \to 0.
	\end{align*}
	Now for any uniformly continuous function $v$ on $\overline{B\(0, \tanh \f{R}{2}\)}$, the above implies
	\begin{align}
		\sup_{x \in B\(0, \tanh \f{R}{2}\)} \abs{v\(g_n^{-1} x\) - v(x)} \to 0. \label{convergence for uniformly continuous function}
	\end{align}
	Now we have
	\begin{align*}
		\abs{\nabla \(u \circ g_n^{-1}\)(x) - \nabla u(x)} & = \abs{g_n \nabla u \(g_n^{-1} x\) - \nabla u(x)}\\
		& \leq \abs{\(g_n - I\) \nabla u \(g_n^{-1} x\)} + \abs{\nabla u \(g_n^{-1} x\) - \nabla u(x)}
	\end{align*}
	Both the terms uniformly converge to $0$ in $B\(0, \tanh \f{R}{2}\)$, because of $g_n \to I$ and \eqref{convergence for uniformly continuous function} respectively. We have that for any $\la < \la_1$, $\norm{\cdot}_\la$ is an equivalent norm on $H^1\(\bn\)$, therefore
	\begin{align*}
		\norm{T_{g_n} u - u}_\la^2 \leq C \int_{\bn} \abs{\nabla_{\bn} \(T_{g_n} u - u\)}^2 \dv & = C \int_{\Bee} h^{N-2} \abs{\nabla \(T_{g_n} u - u\)}^2 \dx\\
		& \leq C \int_{B\(0, \tanh \f{R}{2}\)} \abs{\nabla \(T_{g_n} u - u\)}^2 \dx\\
		& \leq C \norm{\nabla \(u \circ g_n^{-1}\) - \nabla u}_{L^{\infty} \(B\(0, \tanh \f{R}{2}\)\)}\\
		& \to 0.
	\end{align*}
\end{proof}

\subsection{Local decay and compactness}\label{Decay and compactness}

Again we take $\rn = \R^{k_1} \times \R^{k_2} \times \cdots \times \R^{k_l}, \; \Ga = O(k_1) \times O(k_2) \times \cdots \times O(k_l)$ and
\begin{align*}
	k_\ast = \min_{1 \leq i \leq l} k_i \geq 2.
\end{align*}
Let $x = \(\rho, \theta\) \in \bn \setminus \{0\}$, with $\theta = \(\theta_1, \cdots, \theta_l\)$ and let us define
\begin{align*}
	Z_i(x) = \abs{\theta_i} \sinh \rho, \quad \text{for} \; 1 \leq i \leq l.
\end{align*}

Furthermore, let us take the continuous group homomorphism to be $\phi : \Ga \to \mathbb{Z}_2$, which may not be onto, and we consider the equivariant subspace $H^1\(\bn\)^\phi$. For the functions in this subspace, the orbit geometry can be described separately for each block, yielding a more precise decay estimate than the bound from the $(QOP)_{\Ga, d}$ condition. Since these are arbitrary functions, we do not have the elliptic regularity used in \Cref{Decay for solutions} to obtain pointwise decay. Instead, we obtain a local integral estimate. The estimate below explicitly shows how the different blocks contribute to the decay.\\

\begin{proposition}
	Fix $\zeta > 0$. For every $q \in \[2,2^*\]$, there exists a constant $C \equiv C \(\zeta, q, \Ga\) > 0$ such that for every $u \in H^1\(\bn\)$ for which $|u|$ is $\Ga$-invariant, we have
	\begin{align}
		\int_{B_\zeta(x)} \abs{u}^q \dv \leq C \norm{u}_{H^1\(\bn\)}^q \prod_{i=1}^{l} \(1 + Z_i(x)\)^{-k_i+1} \leq C \norm{u}_{H^1\(\bn\)}^q \(1 + \f{1}{\sqrt{l}} \sinh \rho \)^{-k_\ast + 1}. \label{Local decay estimate}
	\end{align}
	Consequently, for every $2<q<2^*$, the embedding $H^1\(\bn\)^\phi \hookrightarrow L^q\(\bn\)$ is compact.
\end{proposition}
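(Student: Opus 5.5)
The plan is a packing argument in the spirit of \Cref{Decay for solutions}, but carried out one block at a time and with integral rather than pointwise bounds. The first step is a uniform local bound. For every $y \in \bn$ we want
\begin{align*}
\int_{B_\zeta(y)} |u|^q \dv \leq C \norm{u}_{H^1(B_{2\zeta}(y))}^q ,
\end{align*}
with $C$ independent of $y$. To get this, translate $y$ to the origin with $\tau_{-y}$ and use \Cref{change of variable}, which preserves both the integrals and the $H^1$-norm. It then suffices to apply the local Sobolev inequality on $B_{2\zeta}$ (as in \eqref{local Sobolev inequality}) together with H\"older's inequality for $q \in [2,2^*]$.

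The second step is to pack the orbit. Fix $x=(\rho,\theta)$. For each block $i$, the subgroup $O(k_i)$ moves the component $s\theta_i$ over the whole sphere of radius $s|\theta_i|$ in $\R^{k_i}$, while the other components stay fixed. Choose a maximal $\eta_i$-separated family on $\Sl^{k_i-1}_{|\theta_i|}$. By \eqref{Lower bound for the minimal sphere}, it has at least $c(|\theta_i|/\eta_i)^{k_i-1}$ points when $\eta_i \le |\theta_i|$, and we take one point otherwise. Taking products over the blocks gives points $g x$, $g \in \Ga$, whose Euclidean mutual distances are at least $s\min_i \eta_i$ coordinatewise in some block.

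We choose $\eta_i \simeq 4\sinh(2\zeta)/\sinh\rho$, as in the lemma preceding \Cref{Decay for solutions}, via $d_{\bn}=2\sinh^{-1}(|x_i-x_j|/(1-s^2))$. Then the geodesic balls $B_{2\zeta}(gx)$ are pairwise disjoint. The number of such balls is at least
\begin{align*}
c\prod_i \max\left(1,\ \tfrac{|\theta_i|\sinh\rho}{C_\zeta}\right)^{k_i-1} \ge c_\zeta \prod_i (1+Z_i(x))^{k_i-1}.
\end{align*}
Because $|u|$ is $\Ga$-invariant and $O(N)$ acts by isometries, every ball $B_\zeta(gx)$ carries the same $L^q$-mass. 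Summing the first step over the disjoint balls then gives
\begin{align*}
\#\cdot \int_{B_\zeta(x)}|u|^q \dv \le C\sum_g \norm{u}^q_{H^1(B_{2\zeta}(gx))} \le C\Big(\sum_g \norm{u}^2_{H^1(B_{2\zeta}(gx))}\Big)^{q/2} \le C\norm{u}_{H^1(\bn)}^q ,
\end{align*}
using $q\ge 2$. This is the first inequality of \eqref{Local decay estimate}. One technical point needs care: the invariance must be applied to $|u|$, and the norms $\norm{u}_{H^1(B_{2\zeta}(gx))}$ are all equal because $|\nabla|u||=|\nabla u|$ a.e.

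For the second inequality, some block $j$ has $|\theta_j|\ge 1/\sqrt l$, as in the proof of the $(QOP)$ proposition. The corresponding factor is then at most $(1+\sinh\rho/\sqrt l)^{-k_j+1} \le (1+\sinh\rho/\sqrt l)^{-k_*+1}$, and every other factor is at most $1$.

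The third step is compactness. Let $u_n \rightharpoonup 0$ in $H^1(\bn)^\phi$; note that $|u_n|$ is $\Ga$-invariant because $\phi$ takes values $\pm1$. Cover $\bn\setminus B_R$ by the Gromov covering of \Cref{Gromov's covering lemma} with radius $\zeta$. For $2<q<2^*$, write
\begin{align*}
\int_{B_\zeta(x_j)}|u_n|^q \le \Big(\sup_{x_j\notin B_{R-\zeta}}\int_{B_\zeta(x_j)}|u_n|^q\Big)^{1-2/q}\Big(\int_{B_\zeta(x_j)}|u_n|^q\Big)^{2/q}.
\end{align*}
The last factor is bounded by $C\norm{u_n}^2_{H^1(B_{2\zeta}(x_j))}$, and summing over $j$ uses finite overlap. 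The decay estimate makes the supremum small, uniformly in $n$, once $R$ is large. This gives $\int_{\bn\setminus B_R}|u_n|^q \le \epsilon$ uniformly in $n$. On $B_R$ the Rellich–Kondrachov theorem gives $u_n\to0$ in $L^q$.

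The main obstacle is the second step. The bookkeeping of separation across blocks is delicate when some $|\theta_i|$ is small, which is why such blocks contribute a factor of $1$ and why disjointness must be verified with the exact hyperbolic distance formula.
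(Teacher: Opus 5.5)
Your argument is correct. Its core is the paper's packing argument: you take products of maximal $\eta$-separated families on the block spheres $\Sl^{k_i-1}_{\abs{\theta_i}}$, convert them to hyperbolic separation via $d_{\bn}=2\sinh^{-1}\bigl(\tfrac{\eta}{2}\sinh\rho\bigr)$, and then use the $\Ga$-invariance of $\abs{u}$. Your $\max(1,Z_i/C_\zeta)$ bookkeeping is actually cleaner than the paper's Case II, which claims $(\abs{\theta_i}/\eta)^{k_i-1}\geq(1+\abs{\theta_i}/\eta)^{k_i-1}$; that is true only up to a factor $2^{k_i-1}$.

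The routes differ in two places.

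\textbf{The decay estimate.} The paper bounds $\sum_g\int_{B_\zeta(gx)}\abs{u}^q\dv$ directly by the global inequality $\int_{\bn}\abs{u}^q\dv\le C\norm{u}_{H^1(\bn)}^q$ from \eqref{Poincare-Sobolev}. It therefore only needs the balls $B_\zeta(gx)$ to be disjoint, i.e.\ $\eta=2\sinh\zeta/\sinh\rho$. Your local Sobolev bound on doubled balls, combined with $\sum_g a_g^{q/2}\le(\sum_g a_g)^{q/2}$, is valid but not needed at this stage. Also, the equality of the local norms $\norm{u}_{H^1(B_{2\zeta}(gx))}$ that you flag is never used, since you only sum these norms over disjoint balls.

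\textbf{The compactness step.} This is where your local bound pays off. The paper takes $q=2$, uses Rellich--Kondrachov to get $\sup_x\int_{B_\zeta(x)}\abs{u_n}^2\dv\to0$, and then cites Lions' vanishing lemma, only asserting that it adapts to $\bn$. You instead use the estimate at the exponent $q$ itself, together with:
\begin{itemize}
\item the interpolation $a\le(\sup a)^{1-2/q}a^{2/q}$;
\item bounded overlap of the covering, which follows from disjointness of the $B_{\zeta/2}(x_j)$ in \Cref{Gromov's covering lemma} and homogeneity of $\bn$.
\end{itemize}
This gives a tail bound uniform in $n$, of order $(1+\sinh(R-\zeta)/\sqrt{l})^{(1-k_*)(1-2/q)}$, and you finish with Rellich--Kondrachov on $B_R$.

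In effect you carry out the proof of Lions' lemma in the tail. Your version is longer, but it is self-contained and gives an explicit tail rate. The paper's version is shorter but leaves the hyperbolic adaptation of the lemma to the reader.
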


\begin{proof}
	For a fixed $i \in \{1, \cdots, l\}$, the $O(k_i)$-orbit of $\theta_i$ is the sphere $\Sl_{\abs{\theta_i}}^{k_i-1} \subset \R^{k_i}$. Let $\eta > 0$ be arbitrary.\\
	
	\textbf{Claim:} $M_\Ga (\theta, \eta) \geq C \prod_{i=1}^{l} \(1 + \f{\abs{\theta_i}}{\eta}\)^{k_i-1}$.
	
	\medskip
	
	Let $M_i$ be the maximal number of points in $\Sl_{\abs{\theta_i}}^{k_i-1}$, that are $\eta$ separated.\\
	
	\textbf{Case I:} $\abs{\theta_i} \leq \eta$.
	
	\medskip
	
	This is a trivial case, i.e., we can have one point, and then the following holds:
	\begin{align*}
		1 \geq 2^{-(k_i-1)} \(1 + \f{\abs{\theta_i}}{\eta}\)^{k_i-1}.
	\end{align*}
	
	\textbf{Case II:} $\abs{\theta_i} > \eta$.\\
	
	We observe that we can have an inequality similar to \eqref{Lower bound for the minimal sphere}, for every $k_i,\; i \in \{1, \cdots, l\}$, i.e., 
	\begin{align*}
		M_i \geq C_{k_i} \( \f{\abs{\theta_i}}{\eta}\)^{k_i-1} \geq C_{k_i} \(1 + \f{\abs{\theta_i}}{\eta}\)^{k_i-1},
	\end{align*}
	where $C_{k_i}$ is a generic constant, independent of $\eta$. Therefore
	\begin{align*}
		M_{\Ga} \(\theta, \eta\) \geq \prod_{i=1}^{l} M_i \geq C \prod_{i=1}^{l} \(1 + \f{\abs{\theta_i}}{\eta}\)^{k_i-1},
	\end{align*}
	where $C = \prod_{i=1}^{l} C_{k_i}$. This completes the proof of the claim.\\
	
	Now, let us take $\eta = \f{2 \sinh \zeta}{\sinh \rho}$, then we have
	\begin{align*}
		m_\Ga (x, \zeta) \geq M_\Ga (\theta, \eta) \geq C \prod_{i=1}^{l} \(1 + |\theta_i| \f{\sinh \rho}{2 \sinh \zeta}\)^{k_i-1}.
	\end{align*}
	Let us choose $C_\zeta = \min \{1, \f{1}{2\sinh \zeta}\}$, then 
	\begin{align*}
		1 + \f{t}{2\sinh \zeta} \geq C_\zeta (1+t), \quad \forall t \geq 0.
	\end{align*}
	Therefore, we get
	\begin{align*}
		m_{\Ga}(x, \zeta) \geq C \prod_{i=1}^{l} \(1 + \f{Z_i(x)}{2 \sinh \zeta}\)^{k_i - 1} \geq C \prod_{i=1}^{l} \(1 + Z_i(x)\)^{k_i-1}.
	\end{align*}
	Let $m = m_\Ga(x,\zeta)$, then there exists a maximal family of group elements $g_1, \cdots, g_m \in \Ga$ such that 
	\begin{align*}
		B_\zeta(g_ix) \cap B_\zeta(g_jx) = \emptyset, \quad \forall i \neq j, \; \text{and} \; i,j \in \{1, \cdots, m\}.
	\end{align*}
	Since $|u|$ is $\Ga$-invariant, we argue
	\begin{align*}
		\int_{B_\zeta (g_jx)} |u_n|^q \dv = \int_{B_\zeta(x)} |u|^q \dv, \quad \forall j \in \{1, \cdots, m\}.
	\end{align*}
	Therefore, from the Poincar\'e-Sobolev inequality \eqref{Poincare-Sobolev} we have
	\begin{align*}
		m \int_{B_\zeta (x)} \abs{u}^q \dv = \sum_{j=1}^{m} \int_{B_\zeta (g_jx)} \abs{u}^q \dv \leq \int_{\bn} \abs{u}^q \dv \leq C \norm{u}_{H^1\(\bn\)}^q.
	\end{align*}
	We have $\sum_{i=1}^{l} \abs{\theta_i}^2 = 1$ and there exists $j \in \{1, \cdots, l\}$ such that $\abs{\theta_j} \geq \f{1}{\sqrt{l}}$. Then $Z_j(x) \geq \f{1}{\sqrt{l}} \sinh \rho$ and this implies
	\begin{align*}
		\prod_{i=1}^{l} \(1 + Z_i(x)\)^{k_i-1} \geq \(1 + Z_j(x)\)^{k_j-1} \geq \(1 + Z_j(x)\)^{k_\ast-1} \geq \(1 + \f{1}{\sqrt{l}} \sinh \rho \)^{k_\ast-1}.
	\end{align*}
	This proves \eqref{Local decay estimate}.\\
	
	To prove compactness, let $u_n \rightharpoonup 0$ in $H^1\(\bn\)^\phi$. Then $\abs{u_n}$ are $\Ga$-invariant and bounded in $H^1\(\bn\)$. Taking $q=2$ in \eqref{Local decay estimate}, we have
	\begin{align}
		\sup_{x \in B_R^c} \int_{B_\zeta (x)} \abs{u}^q \dv \leq C \(1 + \f{1}{\sqrt{l}} \sinh R \)^{1-k_*}. \label{Exterior estimate for compactness}
	\end{align}
	From the Rellich-Kondrachov compactness theorem, we get
	\begin{align}
		\int_{B_{R+\zeta}} \abs{u_n}^2 \dv \to 0, \quad \text{as} \; n \to \infty. \label{Interior estimate for the compactness}
	\end{align}
	Now combining \eqref{Exterior estimate for compactness}, \eqref{Interior estimate for the compactness} and taking $R \to \infty$ we have
	\begin{align*}
		\sup_{x \in \bn} \int_{B_\zeta(x)} \abs{u_n}^2 \dv \to 0 \quad \text{as} \; n \to \infty.
	\end{align*}
	The standard Lions vanishing lemma (Lemma 1.21 \cite{Willem}) can be adapted to the hyperbolic space $\bn$ and this yields, as $n \to \infty$, $u_n \to 0$ in $L^q\(\bn\)$ for every $2<q<2^*$.
\end{proof}

 \subsection*{Acknowledgment}

The author, Atanu Manna is supported by the Ministry of Education, Govt. of India.

\subsection*{Conflict of Interest statement: } We declare no conflict of interest regarding the publication of this paper.
\subsection*{Data availability statement: } We do not analyse or generate any datasets, because our work proceeds within a theoretical and mathematical approach.


	\bibliographystyle{plain}

\end{document}